\newenvironment{sqcases}{%
  \matrix@check\sqcases\env@sqcases
}{%
  \endarray\right.%
}
\def\env@sqcases{%
  \let\@ifnextchar\new@ifnextchar
  \left[
  \def\arraystretch{1.2}%
  \array{@{}l@{\quad}l@{}}%
}
\newcommand{\raisemath}[1]{\mathpalette{\raisem@th{#1}}}
\newcommand{\raisem@th}[3]{\raisebox{#1}{$#2#3$}}
\def\f{\longrightarrow}
\def\N{\mathbb{N}}
\def\e{\varepsilon}
\def\l{\lambda}
\def\x{\bar{x}}
\def\y{\bar{y}}
\def\u{\bar{u}}
\def\p{\bar{p}}
\def\xxi{\bar{\xi}}
\def\nnu{\bar{\nu}}
\def\zzeta{\bar{\zeta}}
\def\ttheta{\bar{\theta}}
\def\vvartheta{\bar{\vartheta}}
\def\oomega{\bar{\omega}}
\def\a{\alpha}
\def\b{\beta}
\def\<{\langle}
\def\>{\rangle}
\def\e{\varepsilon}
\def\R{\mathbb{R}}
\def\O{\mathcal{O}}
\def\inte{\textnormal{int}\,}
\def\clo{\textnormal{cl}\,}
\def\epi{\textnormal{epi}\,}
\def\bdry{\textnormal{bdry}\,}
\def\dom{\textnormal{dom}\,}
\def\conv{\textnormal{conv}\,}
\def\Gr{\textnormal{Gr}\,}
\def\gk{\gamma_k}
\def\gkn{\gamma_{k_n}}
\def\Vphi{\varPhi}
\def\CO{\mathcal{C}}
\def\-{\textnormal{-}}
\newcommand*{\tran}{^{\mkern-1.5mu\mathsf{T}}}
\DeclareMathOperator{\sign}{sign}
\def\sp{\hspace{0.015cm}}
\def\bp{\hspace{-0.08cm}}
\def\bbp{\hspace{-0.04cm}}
\def\bbbp{\hspace{-0.02cm}}
\def\TV{_{\textnormal{\tiny{T.\sp V.}}}}
\def\0gk{\scaleto{0,\gk}{3.5pt}}
\def\supp{\textnormal{supp}\,}
\theoremstyle{thmstyleone}%
\newtheorem{theorem}{Theorem}[section]
\newtheorem{proposition}[theorem]{Proposition}%
\newtheorem{lemma}[theorem]{Lemma}
\newtheorem{corollary}[theorem]{Corollary}
\theoremstyle{thmstyletwo}%
\newtheorem{remark}[theorem]{Remark}%
\theoremstyle{thmstylethree}%
\begin{document}

\title[Control Space for Strong Convergence of Continuous Approximation]{A Control Space Ensuring the Strong Convergence of Continuous Approximation for a Controlled Sweeping Process}

\author[1]{\fnm{Chadi} \sur{Nour}}\email{cnour@lau.edu.lb}
\equalcont{These authors contributed equally to this work.}

\author*[2]{\fnm{Vera} \sur{Zeidan}}\email{zeidan@msu.edu}
\equalcont{These authors contributed equally to this work.}

\affil[1]{\orgdiv{Department of Computer Science and Mathematics}, \orgname{Lebanese American University}, \orgaddress{\street{Byblos Campus}, \postcode{P.O. Box 36}, \state{Byblos}, \country{Lebanon}}}

\affil[2]{\orgdiv{Department of Mathematics}, \orgname{Michigan State University}, \orgaddress{\city{East Lansing}, \postcode{48824-1027}, \state{MI}, \country{USA}}}


\abstract{A controlled sweeping process with prox-regular set, $W^{1,2}$-controls, and separable endpoints constraints is considered in this paper. Existence of optimal solutions is established and {\it local} optimality conditions are derived via {\it strong} converging {\it continuous} approximations  that  entirely reside in the {\it interior} of the prox-regular set. Consequently, these results are expressed in terms of new subdifferentials for the original data that are {\it strictly smaller} than the standard Clarke and Mordukhovich subdifferentials.}

\keywords{ Controlled sweeping process, Prox-regular sets, Necessary optimality conditions, Local minimizers, Strong convergence, Continuous approximations, Nonsmooth analysis}


\pacs[MSC Classification]{49K21, 49K15, 49J52}

\maketitle

\section{Introduction}\label{intro} This paper addresses the following fixed time Mayer-type optimal control problem involving $W^{1,2}$-controlled sweeping systems
$$\begin{array}{l} (P)\colon\; \hbox{Minimize}\;g(x(0),x(1))\vspace{0.1cm}\\ \hspace{0.9cm} \hbox{over}\;(x,u)\in AC([0,1];\R^n)\times \mathscr{W} \;\hbox{such that}\;\\[2pt] \hspace{0.9cm}  \begin{cases} (D)  \begin{sqcases}\dot{x}(t)\in f(x(t),u(t))-\partial \varphi(x(t)),\;\;\hbox{a.e.}\;t\in[0,1],\\x(0)\in C_0\subset \dom \varphi, \end{sqcases}\vspace{0.1cm}\\ x(1)\in C_1, \end{cases}
    \end{array}$$ where, $g\colon\R^n\times\R^n\f\R\cup\{\infty\}$, $f\colon\R^n\times \R^m \f\R^n$, $\varphi\colon\R^n\f\R\cup\{\infty\}$, $\partial$ stands for the Clarke subdifferential, $C:=\dom\varphi$ is the zero-sublevel set of a function $\psi\colon\R^n\f\R$, that is, $C=\{x\in\R^n : \psi(x)\leq0\},$ $C_0\subset C$, $C_1\subset\R^n$, and, for $U\colon[0,1]\rightrightarrows\R^{m}$ a multifunction and $ \mathbb{U}:=\bigcup_{t\in[0,1]} U(t)$,  the set of control functions $\mathscr{W}$ is defined by
\begin{equation}\label{setw}\hspace{-0.2cm}
\mathscr{W}:={W}^{1,2}([0,1]; \mathbb{U})=\left\{ u\in {W}^{1,2}([0,1]; \R^m): u(t) \in U(t),\;\; \forall\sp t\in [0,1]\right\}. \vspace{-0.15cm}
\end{equation}
Note that if $(x,u)$ solves  ($D$), it necessarily follows that $x(t)\in C$ for all $t\in[0,1]$.

A pair $(x,u)$ is {\it admissible} for $(P)$ if $x\colon[0,1]\f\R^n$ is absolutely continuous, $u\in \mathscr{W}$, and $(x,u)$  satisfies the {\it perturbed} and {\it controlled\,} {\it sweeping process} $(D)$, called the {\it dynamic} of $(P)$. 

An admissible pair $(\x,\u)$ for $(P)$ is said to be a $W^{1,2}$-{\it local minimizer} (also known as {\it intermediate local minimizer of rank} 2) if there exists $\delta >0$ such that  \begin{equation}\label{optimal}g(\x(0),\x(1))\le g(x(0),x(1)), \end{equation}
for all $(x,u)$ admissible for $(P)$ with $\|x- \x\|_{\infty}\leq\delta$, $\|\dot{x}- \dot{\x}\|_{2}^2\leq\delta$, $\|u- \u\|_{\infty}\leq\delta$ and  $\|\dot{u}- \dot{\u}\|_{2}^2\leq\delta.$  Note that if \eqref{optimal} is satisfied for any admissible pairs $(x,u)$, then $(\x,\u)$ is called a {\it global minimizer} (or an {\it optimal solution}\sp) for $(P)$.

{\it Sweeping processes} first appeared in the papers \cite{moreau1,moreau2,moreau3} by J.J. Moreau in the context of friction and plasticity theory. Since then, these systems have emerged in further applications such as hysteresis, ferromagnetism, electric circuits, phase transitions, economics, etc. (see e.g., \cite{outrata} and its references). The main feature of such systems is the presence in its dynamic of a normal cone to a set $C$ that is, the subdifferential of the {\it indicator function} of $C$,  or more generally, the subdifferential of an extended-real-valued function. Consequently, the dynamic is  a differential inclusion with unbounded and discontinuous right-hand side. Therefore, sweeping processes fall outside the scope of {\it standard} differential inclusions, and hence, studying optimal control problems over this model requires creative new techniques
 
 In \cite{brokate,pinho,pinholast,verachadiimp,verachadi} (see also \cite{pinhoEr}), necessary optimality conditions in the form of a {\it maximum principle} for optimal control problems involving {\it measurably}-controlled sweeping processes are derived using a {\it smooth penalty-type} approximations, called here {\it continuous} approximations. A special feature of \cite{pinho,pinholast,verachadiimp,verachadi}, which led in \cite{pinhonum,verachadinum} to numerical algorithms, is the {\it novel} exponential penalization technique that approximates the dynamic of $(D)$ by the following sequence of {\it standard}  control systems $$(D_{\gk})\;\;\;\dot{x}(t)=f(x(t),u(t))-\nabla\Vphi(x(t))-\gk e^{\gk\psi(x(t))} \nabla\psi(x(t)),\;\textnormal{a.e.}\; t\, \in[0,1], $$ where $\Vphi$ is a smooth extension to $\R^n$ of $\varphi$, and $\gk$ is a positive sequence that converges to $\infty$ as $k\f\infty$. 
 In these papers,  necessary optimality conditions are developed  via    approximating {\it weakly} the optimal solution of $(P)$  by a sequence of optimal solutions for standard optimal control problems over $(D_{\gk})$.  That is, the velocity sequence  of the optimal state  for the approximating problems convergences {\it weakly} in $L^2$ to the given velocity of the solution for $(P)$.

  {\it Strong} convergence of velocities is well-known to be an essential property  for numerical purposes,  as pointed out in several papers, see e.g., \cite{cmo,cmo2,chhm,pinho}.  In other words, it is important  that the solutions of    $(P)$ be  {\it strongly} approximated (in the 
$W^{1,2}$-norm) by the solutions of  approximating problems  that  are computable via existing numerical algorithms.  This question of strong convergence of velocities was previously addressed using {\it discrete} approximations, see for instance, \cite{ccmn,cmo0,cmo, cmo2, chhm2, chhm, cmn0},  where the authors considered  optimal control problems involving {\it various} forms of controlled sweeping processes including the $W^{1,2}$-controls.  In \cite{cmo0,cmo,cmo2},  this approach also served   to derive necessary optimality conditions phrased in terms of the weak-Pontryagin-type maximum principle when the control space is $W^{1,2}([0,1];\R^m).$  Therein, these optimality criteria are applied to  {\it real-life} models, whose optimal controls turn out to be $W^{1,2}$.
  
 The main goal of the paper is motivated by the importance of approximating a solution of the sweeping process ($D$) by  solutions of $(D_{\gk})$ whose velocity  {\it strongly converges} to the  velocity of the solution of ($D$) as described above. We establish the validity of this result  when the controls in  ($D_{\gk}$) are chosen to be $W^{1,2}$. As a consequence, we embark on the study of the problem $(P)$. We first show that, under suitable conditions, the problem $(P)$ admits an optimal solution $(\x,\u)$. Then, we approximate a given optimal solution ($\x,\u$) by a sequence of optimal solutions for standard optimal control problems over $(D_{\gk})$ with objective functions  carefully formulated to guarantee the {\it strong} convergence of the solution velocities. To our knowledge, this is a {\it first-of-its-kind} result that uses {\it continuous} approximations, as opposed to discrete approximations, to obtain strong convergence of velocities. Furthermore, necessary optimality conditions are established for $W^{1,2}$-local minimizers of $(P)$ upon taking the limit of the optimality conditions for the corresponding approximating optimal control problems. This latter task requires meticulous analysis.
 
The paper is organized as follows. Notations and some definitions from nonsmooth analysis will be given in the next section. In Section \ref{hypo}, we provide a list of assumptions and some important consequences. Moreover, we present some needed results from \cite[Sections 4\,$\&$\,5]{verachadiimp} including the connection between $(D_{\gk})$ and $(D)$ under measurable controls. Section \ref{mainresults} is devoted to $(i)$ showing that $(D_{\gk})$ strongly approximates $(D)$ when $W^{1,2}$-controls are utilized, $(ii)$  establishing an existence theorem for an  optimal solution of $(P)$, $(iii)$  constructing for $(P)$ a continuous approximating sequence of standard optimal control problems $(P_{\gk})$, 
and $(iv)$ deriving necessary optimality conditions in the form of weak-Pontryagin-type  maximum principle for $W^{1,2}$-local minimizers of $(P)$. 
To maintain an easy  flow of the main results,  most of the proofs  are provided in  Section \ref{auxresults}, where we also establish  some auxiliary results  employed in different places of the paper.

\section{Preliminaries}\label{prelim} 
\subsection{Basic notations}
In the sequel, the notations used in this paper are provided. By $\|\cdot\|$, $\<\cdot,\cdot\>$, $B$ and $\bar{B}$, we denote, respectively, the Euclidean norm, the usual inner product, the open unit ball and the closed unit ball. An open (resp. closed) ball of radius $\rho>0$ and centered at $x\in\R^n$ is written as $B_{\rho}(x)$ (resp. $\bar{B}_{\rho}(x)$). For $x,\,y\in\R^n$, $[x,y]$ and $(x,y)$ denote, respectively, the closed and the open line segment joining $x$ to $y$. For a set $C \subset \R^n$, $\inte C$, $\bdry C$, $\clo C$, $\conv C$, $C^c$, and $C^\circ$ designate  the interior, the boundary, the closure, the convex hull, the complement,  and the polar of $C$, respectively. The distance from a point $x$ to a set $C$ is denoted by $d(x,C)$. For an extended-real-valued function $\varphi\colon\R^n\f\R\cup\{\infty\}$,  $\dom \varphi$ is the effective domain of $\varphi$ and $\epi \varphi$ is its epigraph. For  a multifunction $F\colon\R^n\rightrightarrows\R^{m}$, $\Gr F\subset \R^n\times\R^m$ denotes the graph of $F$, that is, $\Gr F:=\{(x,v)\in\R^n\times\R^m : v\in F(x)\}$. The Lebesgue space of $p$-integrable functions $h\colon [a,b]\f\R^n$ is denoted by $L^p([a,b];\R^n)$ or simply $L^p$ when the domain and range are clearly understood. The norms in $L^p([a,b];\R^n)$ and $L^{\infty}([a,b];\R^n)$ (or $C([a,b];\R^n)$) are written as $\|\cdot\|_p$ and $\|\cdot\|_{\infty}$, respectively. The set of all absolutely continuous functions from an interval $[a,b]$ to $\R^n$ will be denoted by $AC([a,b];\R^n)$, and  $\mathscr{M}_{m\times n}([a, b])$ is  the set of $m\times n$-matrix functions  on $[a,b]$. We say that $h$ is a $BV$-function, and we write $h\in BV([a,b];\R^n),$ if  $h$ is a function of bounded variation, that is, $V_a^b(h)<\infty$, where  $V_a^b(h)$ is the total variation of $h$. We denote by $NBV[a,b]$ the normalized space of $BV$-functions on $[a,b]$ that consists of those $BV$-functions $\Omega$ such that $\Omega(a)=0$ and $\Omega$ is right continuous on $(a,b)$ (see e.g., \cite[p.115]{luenberger}). The set $C^*([a,b]; \R)$ denotes the dual of $C([a,b];\R),$ equipped with the supremum norm. The norm on $C^*([a,b]; \R)$, denoted by $\|\cdot\|\TV$, is the induced norm. By Riesz representation theorem, the elements of $C^*([a,b]; \R)$ are interpreted as belonging to $\mathfrak{M}([a,b])$, the set of finite signed Radon measures on $[a,b]$ equipped with the weak* topology. Thereby,  to each element of $C^*([a,b]; \R)$ it corresponds a unique element in $NBV[a,b]$ related through the Stieltjes integral and both elements  have the same total variation. We denote by $C^{\oplus}(a,b)$  the subset of  $C^*([a,b];\R)$  taking nonnegative values on nonnegative-valued functions in $C([a,b]; \R)$. For a compact subset $S\subset\R^d$, $C(S;\R^n)$ designates the set of  continuous functions from  $S$ to $\R^n$. We denote by $W^{k,p}([a,b];\R^n)$, $k\in\N$ and $p\in [0,+\infty]$, the classical Sobolev space. Note that in this paper, the Sobolev space $W^{1,2}([a,b];\R^n)$ will be considered with the norm $\|x(\cdot)\|_{W^{1,2}}:=\|x(\cdot)\|_\infty + \|\dot{x}(\cdot)\|_2.$ Hence, the convergence of a sequence $x_n$ strongly in the norm topology of the space $W^{1,2}([a,b];\R^n)$ is equivalent to the uniform convergence of $x_n$ on $[a,b]$ and the strong  convergence  in $L^2$ of its derivative $\dot{x}_n$.  Finally, a function $F\colon\R^n\to \R$ is $\mathcal{C}^{1,1}$ if it is Fr\'echet differentiable  with locally Lipschitz derivative.

\subsection{ Notions in nonsmooth analysis}

We begin by listing  standard notions and facts for which the reader is invited to
 consult the monographs \cite{clsw}, \cite{mordubook}, and \cite{rockwet}. Let $C$ be a nonempty and closed subset of $\R^n.$ For $x\in C$, the {\it proximal}, the {\it Mordukhovich} ({also known as \it limiting}) and the {\it Clarke normal} cones to $C$ at $x$ are denoted by $N^P_C(x)$, $N_C^L(x)$ and $N_C(x)$, respectively. Using \cite[Proposition 1.1.5(b)]{clsw}, we deduce that these three normal cones enjoy an essential {\it local property}, namely, if two closed sets in $\R^n$ are the {\it same} in a {\it neighborhood} of $x$, then these two sets possess at $x$ the {\it same normal cone} (proximal, Mordukhovich, or Clarke). An important feature for the Mordukhovich normal cone is that the multifunction $N^L_C(\cdot)$ has closed values and a closed graph. On the other hand, when $C$ is convex then the proximal, the Mordukhovich and the Clarke normal cones to $C$ coincide with the well-known normal cone to convex sets. 

 For $\rho > 0$, the set $C$ is said to be $\rho$-\textit{prox-regular} whenever the {\it proximal normal inequality}, see \cite[Proposition 1.1.5(a)]{clsw}, holds for $\sigma=\frac{1}{2\rho}$, for all $x\in C$ and for all $\zeta$ unit in $N_C^P(x)$. In particular, every convex set is $\rho$-prox-regular for every $\rho > 0$, and every compact set with a $\CO^{1,1}$-boundary is $\rho$-prox-regular, where $\rho$ depends on the Lipschitz constant of the gradient of the boundary parametrization. Note that, for a $\rho$-prox-regular set $C,$ we have  $N_C^P (x) = N_C^L (x)  = N_C (x)$ for all $x \in \R^n.$  For more information about prox-regularity, and related properties such as {\it positive reach}, {\it proximal smoothness}, {\it exterior sphere condition} and $\varphi_0$-{\it convexity}, see \cite{prox,cm,fed,nst,prt}. 

The following  geometric properties shall be used in the rest of the paper. A closed set $A\subset\R^n$ is said to be {\it epi-Lipschitz} (or {\it wedged}\,) at a point $x\in A$ if the set $A$ can be viewed near $x$, after application of an orthogonal matrix, as the epigraph of a Lipschitz continuous function. If this holds for all $x\in A$, then we simply say that $A$ is epi-Lipschitz. This geometric definition was introduced by Rockafellar in \cite{rock79}. The epi-Lipschitz property of $A$ at $x$ is also characterizable in terms of the nonemptiness of the topological interior of the Clarke tangent cone of $A$ at $x$ which is also equivalent to the {\it pointedness} of the Clarke normal cone of $A$ at $x$, that is, $N_A(x)\cap-N_A(x)=\{0\}$, see \cite[Theorem 7.3.1]{clarkeold}. Note that a {\it convex} set is epi-Lipschitz if and only if it has a nonempty interior. For more information about this property, see \cite{clarkeold,clsw,rockwet}. On the other hand, a set $A\subset\R^n$ is  {\it quasiconvex} if there exists $\a\geq 0$ such that any two points $x,\,y$ in $A$ can be joined by a polygonal line $\gamma$  in $A$ satisfying $l(\gamma) \leq \alpha \|x-y\|,$ where $l(\gamma)$ denotes the length of $\gamma$. In this paper, the quasiconvexity of $C$ is vital  for extending our function $\varphi$ from $C$ to $\mathbb{R}^n$ while preserving special properties (see Lemma \ref{prop1}). For more explanation about this notion, consult \cite{brudnyi}. 

Next, we recall  the {\it standard} notions of  proximal, Mordukhovich, and Clarke subdifferentials, and Clarke generalized Jacobian and Hessian  (see \cite{clarkeold,clsw, mordubook, rockwet}).  We also enlist the {\it nonstandard} notions for subdifferentials  introduced and studied in \cite{verachadiimp,verachadi} that  are instrumental for this paper  for being  strictly smaller than their standard counterparts notions.
 
 For the standard notions, given a {\it lower semicontinuous} function $G\colon\R^n\f\R\cup\{\infty\}$, and  $x \in \dom G$, the \textit{proximal}, the {\it Mordukhovich} (or \textit{limiting}) and the \textit{Clarke subdifferential}  of $G$ at  $x$ are denoted by  $\partial^P G (x)$, $\partial^L G(x)$ and $\partial G (x)$, respectively. From the properties of the limiting normal cone,  $\partial^LG(\cdot)$ has a closed graph and closed values. Note that if  $x\in\inte (\dom{G})$ and $G$ is Lipschitz near $x$,  \cite[Theorem 2.5.1]{clarkeold} yields that the  Clarke subdifferential of $G$ at  $x$ coincides  with the {\it Clarke generalized gradient} of ${G}$ at $x$, also denoted here by $\partial {G} (x)$. If ${G}$ is $\CO^{1,1}$ near $x\in \inte(\dom G)$,  $\partial^2G(x)$ denotes the {\it Clarke generalized Hessian} of ${G}$ at $x$. For  $H\colon\R^n \f \R^n$ Lipschitz near $x\in\R^n$, the {\it Clarke generalized Jacobian} of $H$ at $x$ is denoted by $\partial{H}(x)$.
 
For the nonstandard notions, given a {\it lower semicontinuous} function $G\colon\R^n\f\R\cup\{\infty\}$,  $S\subset \dom G$ closed with $\inte S\neq\emptyset$, and $x\in\clo(\inte S)$,  we define  the \enquote{{\it limiting subdifferential} of $G$ {\it relative} to $\inte S$ at the point $x$}  to be 
  \begin{equation} \label{delelel} \partial^L_\ell G(x):=\left\{\lim_i \zeta_i : \zeta_i\in\partial^P  {G}(x_i),\; x_i\in \inte S,\;\hbox{and}\;x_i\xrightarrow{{G}} x\right\}.\end{equation}
  where $x_i\xrightarrow{{G}} x$ signifies that $x_i\f x$ and ${G}(x_i)\f G(x)$. If  $\dom G$ is {\it closed}, $\inte(\dom G)\neq \emptyset$,  and $G$  is {\it locally Lipschitz} on $\inte (\dom{G})$, then for $x\in\clo(\inte (\dom{G}))$, we define the \enquote{{\it extended Clarke generalized gradient} of $G$ at $x$}, denoted by $\partial_\ell G(x)$, to be  
\begin{equation}\label{phil1}\partial_\ell G(x):=\conv \left\{\lim_{i\f\infty} \nabla {G}(x_i) : x_i\xrightarrow{\O} x \; \hbox{and}\; \nabla G(x_i) \;\hbox{exists}\; \forall i\;\right\},
\end{equation} 
where $\O$ is any full-measure subset of $\inte (\dom{G})$. If ${G}$ is $\CO^{1,1}$ on $\inte(\dom G)$ and  $x\in\clo(\inte (\dom{G}))$, we define similarly to $\partial_\ell G(x)$ the \enquote{{\it extended  Clarke generalized Hessian} of $G$ at $x$} to be
\begin{equation}\hspace{-0.2cm}\label{phil2} \partial^2_\ell{G}(x):=\conv \left\{\lim_{i\f\infty} \nabla^2 {G}(x_i) : x_i\xrightarrow{\O} x\; \mbox{and}\; \nabla^2 G(x_i) \;\hbox{exists}\; \forall i\;\right\},\end{equation}
 where $\O$ is any full-measure subset of $\inte (\dom{G})$. For $S\subset \R^n$ closed with $\inte S\not=\emptyset$, if $G$ is $\CO^{1,1}$ on an open set containing  $S$,  then  for $x\in S$  we define the \enquote{{\it Clarke generalized Hessian} of $G$ {\it relative} to  $\inte S$ at $x$} to be
\begin{equation}\label{phil4}
\partial^2_\ell{G}(x):= \conv \left\{\lim_{i\f\infty} \nabla^2 {G}(x_i) : x_i\xrightarrow{\O} x\right\}, 
\end{equation}
where $\O$ is any full-measure subset of ${\inte} S$.  Now let $S\subset \R^n$ be closed with $\inte S\not=\emptyset$, and let $H\colon\R^n \f \R^n$ be locally Lipschitz on $\inte S$. Then for $x\in\clo(\inte S)$, we defined the \enquote{{\it extended Clarke generalized Jacobian} of $H$ at $x$} to be
\begin{equation}\label{phil3}
\partial_\ell{H}(x):= \conv \left\{\lim_{i\f\infty} J {H}(x_i) : x_i\xrightarrow{\O} x\;  \hbox{and}\; J{H}(x_i) \;\hbox{exists}\; \forall i\right\}, 
\end{equation}
where $\O$ is any full-measure subset of $\inte S$ and $J$ is the Jacobian operator.  Finally,  when the set defined in (\ref{phil1}), (\ref{phil2}), (\ref{phil3}) or (\ref{phil4}) is a singleton, then we shall use the notations $\nabla_{\bp\ell}$ and $\nabla_{\bp\ell}^2$ instead of $\partial_\ell$ and $\partial_\ell^2$, respectively.

\section{Assumptions, consequences, and known results}\label{hypo} In this section, hypotheses on the data of $(P)$ are introduced and some of their important consequences are provided.  We also present some needed results from \cite[Sections 4\,$\&$\,5]{verachadiimp} where the connection between $(D_{\gk})$ and $(D)$ under measurable controls is studied. We note that each result of this paper shall require a {\it selected}  group of these hypotheses. Furthermore, a local version of (A1) is stated and used in the relevant locations of the paper.
\begin{enumerate}[label=\textbf{A\arabic*}:]
\item There exist $ M>0$  and $\tilde{\rho}>0$ such that $f$ is $M$-Lipschitz on $C\times (\mathbb{U}+\tilde{\rho}\bar{B})$ with 
 $\|f(x,u)\| \leq M $  for all $(x,u)\in C\times (\mathbb{U}+\tilde{\rho}\bar{B})$. 
\item The set $C:=\dom \varphi$ is given by  $C=\{x\in\R^n : \psi(x)\leq 0\},$ where $\psi:\R^n\f\R.$
\begin{enumerate}[label=\textbf{A2.\arabic*}:]
\item There exists $\rho>0$ such that $\psi$ is $\CO^{1,1}$ on $ C+\rho{B}$.
\item There is a constant $\eta>0$ such that $\|\nabla\psi (x)\|>2\eta\;$ for all $x: \psi(x)=0.$
\item The function $\psi$ is coercive, that is, $\lim_{\|x\|\f\infty} \psi(x)=+\infty.$
\item The set $C$ has a connected interior.\footnote{When $\varphi$ has a suitable extension to $\R^n$, as is the case for $\varphi$ being the {\it indicator} of $C$, see Remark \ref{connectednewD}, this condition is omitted. 
 }
\end{enumerate}
\item The function $\varphi$ is globally Lipschitz on $C$ and $\CO^{1}$ on $\inte C$. Moreover, the function $\nabla\varphi$ is globally Lipchitz on $\inte C$.
\item For the sets $C_0$, $C_1$, and $U(\cdot)$  we have:\begin{enumerate}[label=\textbf{A4.\arabic*}:]
\item The set $C_0\subset C$ is nonempty and closed.
\item The graph of $U(\cdot)$ is a $\mathscr{L}\times\mathscr{B}$ measurable set,  and,  for $t\in [0,1],$ $U(t)$ is closed, and bounded uniformly in $t$.
\item The set $C_1\subset\R^n$ is nonempty and closed. 
\item The multifunction $U(\cdot)$ is lower semicontinuous.
\end{enumerate}
\end{enumerate}

\begin{remark}\label{epsilon} The coercivity of $\psi$ in (A2.3) is only assumed to obtain the boundedness of the closed set $C$, and hence, this condition can be replaced  by $C$ bounded. On the other hand, for $C\subset \R^n$ defined as the sub-level set of a function $\psi$, one can show that: 
\begin{enumerate}[$(i)$,leftmargin=0.7cm] 
	\item  Whenever $C$ is nonempty and compact, $\psi$ is merely 
	$\mathcal{C}^{1}$  on $C +\rho B$, and (A2.2)  holds, then there exists $\varepsilon >0$ such that	
\begin{equation} \label{velo0} x\in C\;\hbox{and}\; \|\nabla\psi(x)\|\leq \eta\implies \psi(x)<-\e.
\end{equation}
\item When  $\psi$ is merely $\mathcal{C}^1$ on $C+\rho B$  and satisfies \textnormal{(A2.2)-(A2.3)}, by {\cite[Lemma 3.3]{verachadiimp}},
\begin{enumerate}[leftmargin=*]
\item $\bdry C\neq \emptyset$ \;and\; $\bdry C=\{x\in\R^n : \psi(x)=0\},$ 
\item $\inte C\neq \emptyset$ \; and\; $\inte C=\{x\in\R^n : \psi(x)<0\}.$
\end{enumerate}
\end{enumerate}
\end{remark}

 The following important properties of the compact set $C$  were  obtained in  \cite[Proposition 3.1]{verachadi}, where  $\psi$ is assumed to be $\CO^{1,1}$ on {\it all} of $\R^n$.  However,  a slight  modification in the proof of that proposition  is performed  in \cite{verachadiimp} to conclude that these properties are actually valid under our assumption (A2.1).
 
 Here and throughout the paper, $\bar{M}_\psi$ denotes an upper bound of $\|\nabla\psi(\cdot)\|$ on the compact set $C$, and  $2 M_\psi$ is a {\it Lipschitz constant} of $\nabla \psi(\cdot)$ over the compact set $C+\frac{\rho}{2}\bar{B}$ chosen large enough so that $
 M_\psi\geq \frac{4\eta}{\rho}.$ 
\begin{lemma}\textnormal{{\cite[Lemma 3.4]{verachadiimp}}} \label{prop1} Under \textnormal{(A2.1)-(A2.3)}, we have the following\sp$:$
\begin{enumerate}[$(i)$,leftmargin=0.8cm] 
\item The nonempty  set  $C$ is compact, amenable $($in the sense of \textnormal{\cite{rockwet}}$)$, epi-Lipschitzian, $C=\clo(\inte C)$, and  $C$ is $\frac{\eta}{M_\psi} $-prox-regular.
\item For all $x\in\bdry C$ we have $N_C(x)=N^P_C(x)=N^L_C(x)=\{\lambda\nabla\psi(x) : \lambda\geq 0\}.$
\item If also \textnormal{(A2.4)} holds, then $\inte C$ is quasiconvex. Furthermore, if in addition \textnormal{(A3)} is satisfied, then there exists a function $\Vphi\in \CO^{1}(\R^n)$ such that\sp$:$ \begin{itemize}[leftmargin=*]
 \item $\Vphi$ is bounded on $\R^n$, and \;$\Vphi (x)= \varphi (x)$ for all $x\in C$.
 \item $\Vphi$ and $\nabla\Vphi$ are globally Lipschitz on $\R^n$.
 \item  For all $x\in C$ we have  
\begin{equation}\label{eq0}\partial \varphi(x)=\{\nabla \Vphi(x)\} + N_C(x).
\end{equation}	
 \end{itemize}
\end{enumerate}
\end{lemma}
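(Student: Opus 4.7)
My plan is to establish the three parts in sequence, systematically exploiting that $C=\psi^{-1}((-\infty,0])$ is the zero sub-level set of a $\CO^{1,1}$ function whose gradient is bounded away from zero on the boundary.

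For (i), I would first observe that compactness is immediate: $C$ is closed by continuity of $\psi$, and bounded by the coercivity (A2.3). To verify $\frac{\eta}{M_\psi}$-prox-regularity, I would fix $x\in\bdry C$ and a unit proximal normal $\zeta\in N_C^P(x)$. Since (A2.2) forces $\zeta$ to be a positive multiple of $\nabla\psi(x)$ (the forward half of (ii), which I would prove first), one has $\zeta=\nabla\psi(x)/\|\nabla\psi(x)\|$. A second-order expansion of $\psi$ at $x$, using the $2M_\psi$-Lipschitz property of $\nabla\psi$ on $C+\frac{\rho}{2}\bar{B}$ together with $\psi(y)\le 0=\psi(x)$, yields
\[
\langle\nabla\psi(x),y-x\rangle\le M_\psi\|y-x\|^2 \quad\text{for all } y\in C \text{ near } x,
\]
which, after dividing by $\|\nabla\psi(x)\|>2\eta$, delivers the proximal normal inequality with $\sigma=\frac{M_\psi}{2\eta}$, i.e.\ $\rho$-prox-regularity with $\rho=\eta/M_\psi$ (the global version follows from the boundedness of $C$). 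Amenability is then a consequence of the Rockafellar--Wets characterization \cite{rockwet} for $\CO^{1,1}$ sub-level sets satisfying a nondegenerate gradient condition, and the epi-Lipschitz property follows since amenability plus $\nabla\psi\neq 0$ on $\bdry C$ yields pointedness of $N_C$. For the density $C=\clo(\inte C)$, I would use Remark \ref{epsilon}(ii) to identify $\inte C=\{\psi<0\}$, and note that any $x\in\bdry C$ is approximated by $x-t\nabla\psi(x)\in\inte C$ for small $t>0$, because $\psi(x-t\nabla\psi(x))=-t\|\nabla\psi(x)\|^2+O(t^2)<0$.

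For (ii), the inclusion $\{\lambda\nabla\psi(x):\lambda\ge 0\}\subset N_C^P(x)$ is exactly the computation used to get prox-regularity. Conversely, any $\zeta\in N_C^P(x)$ must be nonpositive on the open tangent half-space $\{v:\langle\nabla\psi(x),v\rangle<0\}$, forcing $\zeta\in\cone\{\nabla\psi(x)\}$ since $\nabla\psi(x)\neq 0$. Prox-regularity then collapses $N_C^P=N_C^L=N_C$.

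For (iii), quasiconvexity of $\inte C$ is a classical consequence of $\inte C$ being a bounded connected open set with $\CO^{1,1}$ boundary, since this geometry yields a uniform interior cone condition with uniformly bounded path lengths. The construction of $\Vphi$ is the technical core. I would first argue, using quasiconvexity together with (A3), that $\varphi$ extends with its gradient continuously to all of $C=\clo(\inte C)$ and is $\CO^{1,1}$ there: quasiconvexity makes the intrinsic and Euclidean metrics on $\inte C$ comparable, so the global Lipschitz bound on $\nabla\varphi$ propagates up to $\bdry C$. Once $\varphi$ is $\CO^{1,1}$ on the compact set $C$, a Whitney-type extension (or a smooth partition-of-unity construction) produces $\Vphi\in\CO^1(\R^n)$ globally Lipschitz with globally Lipschitz gradient, and boundedness can be arranged by a smooth cutoff outside a large ball. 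Relation \eqref{eq0} then follows from the Clarke sum rule applied to $\varphi=\Vphi+I_C$ on $C$: since $\Vphi$ is smooth everywhere and $\partial I_C(x)=N_C(x)$, the sum splits as $\partial\varphi(x)=\nabla\Vphi(x)+N_C(x)$.

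The hard part will be the extension step in (iii). Without quasiconvexity, the Lipschitz constant of $\nabla\varphi$ on $C$ could blow up near pinched portions of $\bdry C$, where two interior sequences approaching the same boundary point may be geodesically far apart even though Euclidean-close. Hypothesis (A2.4) is precisely what rules out this pathology and enables a globally Lipschitz $\CO^1$ extension to $\R^n$.
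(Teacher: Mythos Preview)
The paper does not prove this lemma; it is quoted verbatim from \cite[Lemma 3.4]{verachadiimp} (itself a refinement of \cite[Proposition 3.1]{verachadi}), so there is no in-paper argument to compare against. Your outline is a faithful reconstruction of the standard route such a proof would take, and the steps for (i)--(ii) are essentially correct. One refinement: when you promote the local proximal-normal inequality to a global one, ``boundedness of $C$'' is not the operative reason. Rather, split into the case $\|y-x\|<\tfrac{2\eta}{M_\psi}$, where the paper's normalization $M_\psi\ge\tfrac{4\eta}{\rho}$ forces $[x,y]\subset C+\tfrac{\rho}{2}\bar B$ so that your Taylor estimate is valid, and the case $\|y-x\|\ge\tfrac{2\eta}{M_\psi}$, where $\langle\zeta,y-x\rangle\le\|y-x\|\le\tfrac{M_\psi}{2\eta}\|y-x\|^2$ holds trivially.

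For (iii), your diagnosis of the ``hard part'' is slightly off. Since (A3) already asserts that $\nabla\varphi$ is globally Lipschitz on $\inte C$ \emph{in the Euclidean metric}, that Lipschitz constant cannot ``blow up near pinched portions of $\bdry C$''; it extends to $C=\clo(\inte C)$ by uniform continuity with no extra hypothesis. What quasiconvexity of $\inte C$ actually buys is the second-order Whitney compatibility
\[
|\varphi(y)-\varphi(x)-\langle\nabla\varphi(x),y-x\rangle|\;\le\; K\|y-x\|^2,\qquad x,y\in C,
\]
obtained by integrating $\nabla\varphi$ along a polygonal path $\gamma\subset\inte C$ with $\ell(\gamma)\le\alpha\|y-x\|$ (this yields $K$ of order $L\alpha^2$, where $L$ is the Lipschitz constant of $\nabla\varphi$). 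Without such a path the fundamental-theorem-of-calculus step is unavailable even though $\nabla\varphi$ itself stays Lipschitz. Once this Whitney datum is secured on the compact set $C$, the $\CO^{1,1}$ extension to $\R^n$ (via the machinery in \cite{brudnyi}, which is what the cited proof invokes, or a direct Whitney construction) and the Clarke sum rule for $\varphi=\Vphi+I_C$ proceed exactly as you describe.
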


\begin{remark}\label{connectednewD} \qquad  \begin{enumerate}[$(i)$,leftmargin=0.7cm] 
\item In Lemma \ref{prop1}$(iii)$, assumption (A2.4) is  only imposed to ensure the quasiconvexity of $C$ needed to obtain the extension  $\Vphi$ of $\varphi$. Consequently, when such an extension is readily available, condition (A2.4) is discarded. This is the case, for instance, when $\varphi$ is constant on $C$. Thus,  when $\varphi$ is the {\it indicator} function of $C$ assumption (A2.4) is not required. 
\item From Lemma \ref{prop1}$(iii),$  $\varphi$ admits a $\mathcal{C}^{1}$-extension $\Vphi$ defined on $\R^n$ satisfying  equation (\ref{eq0}), and for some $K>0$,  
\begin{itemize}[leftmargin=*]
\item [] \hspace*{-0.5cm} $\lvert\Vphi(x)\rvert\leq K$, $\|\nabla\Vphi(x)\|\leq K$, and  $\|\nabla\Vphi(x)-\nabla\Vphi(y)\|\leq K\|x-y\|,\;\,\forall  x,\,y\in\R^n.$  
\end{itemize}
Employing (\ref{eq0}),  $(D)$ is equivalently  phrased in terms of the normal cone to $C$ and the extension $\Vphi$ of $\varphi$, as follows 
$$(D) \begin{sqcases}\dot{x}(t)\in  f_{\Vphi}(x(t),u(t)) -N_C(x(t)),\;\;\hbox{a.e.}\;t\in[0,1],\\x(0)\in C_0\subset C,	 \end{sqcases}\\$$
where $f_{\Vphi}\colon \R^n\times \R^m \f \R^n$ is defined by
\begin{equation} \label{f.phi} f_{\Vphi}(x,u):=f(x,u)-\nabla\Vphi(x),\;\;\;\forall (x,u)\in \R^n\times\R^m.\end{equation}
\end{enumerate}
\end{remark}

Therefore, we will interchangeably use throughout this paper  the original formulation of $(D)$ given in terms of  $\partial\varphi$ and $f$,  and  its reformulation provided in Remark \ref{connectednewD}  in terms of $N_C(\cdot)$ and  the function $f_{\Vphi}$. Note that assumptions (A1)-(A3) imply that,   for $\bar{M}:=M+K$, $f_{\Vphi}$ satisfies the following properties:
\begin{enumerate}[leftmargin=1.24cm, label=\textbf{(A\arabic*)$_\Vphi$}:]
\item The function $f_\Vphi$ is $\bar{M}$-Lipschitz on $C\times (\mathbb{U}+\tilde{\rho}\bar{B})$ with 
 $\|f_\Vphi(x,u)\| \leq \bar{M}$  for all $(x,u)\in C\times (\mathbb{U}+\tilde{\rho}\bar{B})$. 
\end{enumerate}
We define $\mathscr{U}$ to be
\begin{equation*}\label{scriptU} \mathscr{U}:=\{u:[0,1]\to \R^m : u \;\hbox{is measurable and}\; u(t)\in U(t), \; t\in [0,1] \;\hbox{a.e.}\}.\end{equation*}
\begin{remark}\label{lip.D} Using \cite[Lemma 4.3]{verachadi}, it is easy to see that  the assumptions (A1)-(A3) and  the boundedness of $C$ by some $M_C>0$ yield that any solution $x$ of ($D$) corresponding to $(x_0,u)\in C_0\times \mathscr{U}$ satisfies\begin{equation}\label{lipx} x(t)\in C,\;\; \forall t\in[0,1];\;\; \|x\|_{\infty}\le M_C;\;\;\hbox{and} \;\;\|\dot{x}\|_{\infty}\le 2 \bar{M}.
\end{equation} 	
\end{remark}

For given  $x(\cdot)\colon [0,1]\to \R^n,$ we use the following notations throughout this paper: $ I^0(x):=\{t\in [0,1]:  x(t)\in \bdry C\}$ and $I^{\-}(x):=[0,1]\setminus I^0(x)$.
 
The next result  characterizes  the solutions of ($D$) in terms of the solutions of a standard control system   containing an {\it extra} control $\xi$ that satisfies the mixed control-state {\it degenerate} constraint, $\xi(t)\psi(x(t))=0$.   The sufficiency part  is straightforward and was used  in \cite{verachadi}, while the necessary part follows   from applying  Filippov selection theorem (\cite[Theorem 2.3.13]{vinter}).
\begin{lemma}\label{characterizationD} Assume that \textnormal{(A1-(A3)} hold. Let  $u\in \mathscr{U}$ and   $x\in AC([0,1];\R^n)$ with  $x(0)\in C_0$ and $x(t)\in C$ for all $t\in [0,1]$. Then, $x$  is a solution for  $(D)$ corresponding  to the control  $u$   {\it if and only if} there exists  a nonnegative measurable function $\xi$   supported on $ I^{0}(x)$  such that  $(x,u,\xi)$ satisfies
\begin{equation}\label{admissible-P} \dot{x}(t)= f_\Vphi(x(t),u(t))-\xi(t) \nabla\psi(x(t)),\;\;\;t\in[0,1]\; \textnormal{a.e.} \end{equation}
In this case, the nonnegative function  $ \xi$ supported in $I^{0}(x)$ with $(x,u,\xi)$ satisfying   equation \eqref{admissible-P}, is unique,  belongs to  $L^{\infty}([0,1];\R^{+})$,  and 
\begin{equation}\label{boundxi}  \begin{cases} \xi(t)=0 &\;\;\hbox{for}\;\;t\in I^\-(x),\\[4pt]\xi(t)=\frac{\|\dot{x}(t)-f_{\Vphi}(x(t),u(t))\|}{\|\nabla\psi(x(t))\|}\in \left[0, \frac{\bar{M}}{2\eta}\right]&\;\;\hbox{for}\;\;t\in I^0(x) \;\textnormal{a.e.},\\[5pt] \|\xi\|_{\infty}\le \frac{\bar M}{2\eta}.	
\end{cases}
\end{equation}
\end{lemma}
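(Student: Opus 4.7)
My plan is to handle the two implications separately and to extract the uniqueness of $\xi$, the explicit formula, and the $L^\infty$-bound directly from the necessity argument.

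For the sufficiency, I assume $(x,u,\xi)$ satisfies \eqref{admissible-P} with $\xi\geq 0$ measurable and supported on $I^{0}(x)$. Using the normal-cone reformulation of $(D)$ in Remark \ref{connectednewD}, it suffices to show $\dot x(t)-f_\Vphi(x(t),u(t))\in -N_C(x(t))$ a.e. On $I^{\-}(x)$, $x(t)\in\inte C$ so $N_C(x(t))=\{0\}$ and the condition $\xi(t)=0$ closes that case. On $I^{0}(x)$, $x(t)\in\bdry C$, and Lemma \ref{prop1}$(ii)$ together with $\xi(t)\geq 0$ yields $\xi(t)\nabla\psi(x(t))\in N_C(x(t))$. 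Both cases deliver the required inclusion.

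For the necessity, if $x$ solves $(D)$ then Remark \ref{connectednewD} and Lemma \ref{prop1}$(ii)$ produce, for a.e.\ $t$, some $\lambda(t)\geq 0$ with $\dot x(t)-f_\Vphi(x(t),u(t))=-\lambda(t)\nabla\psi(x(t))$; on $I^{\-}(x)$ one is forced to $\lambda(t)=0$. On $I^{0}(x)$, hypothesis (A2.2) guarantees $\|\nabla\psi(x(t))\|\geq 2\eta>0$, so $\lambda(t)$ is \emph{unique} and
\[
\lambda(t)=\frac{\|\dot x(t)-f_\Vphi(x(t),u(t))\|}{\|\nabla\psi(x(t))\|}.
\]
Measurability of $\lambda$ follows by inspection of this closed form, or, alternatively, via Filippov's selection theorem \cite[Theorem 2.3.13]{vinter} applied to the closed-valued multifunction $t\mapsto\{\lambda\geq 0:\dot x(t)-f_\Vphi(x(t),u(t))+\lambda\nabla\psi(x(t))=0\}$, as suggested in the statement. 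Defining $\xi:=\lambda$ and extending by zero off $I^{0}(x)$ settles existence, uniqueness, and the explicit formula \eqref{boundxi}.

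To prove the $L^\infty$-bound, I plan to exploit that $\psi\circ x$ is absolutely continuous on $[0,1]$ (since (A2.1) gives $\psi\in\mathcal{C}^{1,1}$ on $C+\rho B$ and $x\in AC$) and vanishes identically on the closed set $I^{0}(x)$. A standard fact about AC functions then gives $\langle\nabla\psi(x(t)),\dot x(t)\rangle=0$ at a.e.\ $t\in I^{0}(x)$. Substituting \eqref{admissible-P} and applying Cauchy--Schwarz together with (A1)$_\Vphi$,
\[
\xi(t)=\frac{\langle\nabla\psi(x(t)),f_\Vphi(x(t),u(t))\rangle}{\|\nabla\psi(x(t))\|^{2}}\leq \frac{\|f_\Vphi(x(t),u(t))\|}{\|\nabla\psi(x(t))\|}\leq \frac{\bar M}{2\eta},
\]
at a.e.\ $t\in I^{0}(x)$, while $\xi\equiv 0$ on $I^{\-}(x)$. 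This yields $\xi\in L^\infty$ and the stated bound. The only genuinely delicate step I anticipate is the measurable selection on $I^{0}(x)$; the non-vanishing of $\|\nabla\psi\|$ on $\bdry C$ provided by (A2.2) converts that selection into a closed-form expression, reducing measurability to composition of measurable and continuous maps.
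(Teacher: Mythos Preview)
Your proof is correct and follows essentially the same route the paper sketches: the sufficiency is handled directly through the normal-cone reformulation in Remark~\ref{connectednewD} and Lemma~\ref{prop1}$(ii)$, and the necessity by reading off the multiplier from the cone structure, with measurability via Filippov's theorem as the paper indicates. Your derivation of the $L^\infty$-bound through the identity $\langle\nabla\psi(x(t)),\dot x(t)\rangle=0$ a.e.\ on $I^0(x)$ is more explicit than the paper's brief reference and is a clean, self-contained way to reach the sharp constant $\bar M/(2\eta)$.
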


Throughout the paper we shall employ the following notations, where $\eta$ and $\bar{M}$  are the constants given in (A2.2) and  (A1)$_\Vphi$, respectively.
\begin{itemize}[leftmargin=*]
\item $(\gk)_k$ is a sequence satisfying \begin{equation}  \label{assumpgk} \gk>\frac{2\bar{M}}{\eta}
 \;\,\hbox{for all}\; k\in\N,\;\hbox{and}\; \gk \xrightarrow[k\f\infty ]{}\infty.\end{equation}
 \item The sequence $(\a_k)_k $ is defined by  \begin{equation} \label{velo2} \a_{k}:=\frac{\ln \left(\frac{\eta\gk}{2\bar{M}}\right)}{\gk},\;\;\;k\in\N \end{equation}
 By \eqref{assumpgk} and \eqref{velo2}, we have that  \begin{equation}\label{velo2bis}  \gk e^{-\a_{k}\gk}=\frac{2\bar{M}}{\eta},\;\;\a_k>0,\;\; \a_k\searrow \;\hbox{and}\;\lim_{k\f\infty}\a_{k}=0.
\end{equation}
\item The sequence  $(\rho_k)_k$ is defined by $\rho_{k}:=\frac{\a_k}{\eta}$ for all $k\in\N$. By (\ref{velo2bis}) we have that  $ \rho_k>0$ for all $k\in\N$, $\rho_k\searrow\,$ and $\,\lim\limits_{k\f\infty}\rho_{k}=0.$
\item For $k\in\N$, we define the set \begin{equation}\label{C(k)} C(k):=\{x\in C : \psi(x)\leq -\a_k\}.
\end{equation}
\end{itemize}
The system $(D_{\gk})$ is defined as
$$({D}_{\gk}) \begin{sqcases} \dot{x}(t)=f_\Vphi(x(t),u(t))-\gk e^{\gk\psi(x(t))} \nabla\psi(x(t))\;\;\textnormal{a.e.}\; t\in[0,1],\\ 
x(0)\in C. \end{sqcases} $$
An important property shown in  \cite{verachadiimp} is the  invariance of $C$ for the dynamic $(D_{\gk})$, see \cite[Lemma 4.1]{verachadiimp}.  This fact is behind disposing  of the state constraint in ($D_{\gk}$),  which represents a good  approximation for $(D)$ (see Theorem \ref{prop2} and Corollary \ref{interiorrem}).

\begin{lemma}[Invariance of $C$ and uniform convergence] \label{invariance} Let \textnormal{(A1)-(A3)} be satisfied. Then, for each $k$, the system $(D_{\gamma_k})$ with given $x(0)=c_{\gk}\in C$ and $u_{\gk}\in \mathscr{U}$, has a unique solution $x_{\gk} \in W^{1,2}([0,1];\R^n)$ such that  $x_{\gk}(t)\in C$ for all $t\in [0,1]$, and, for $\a_0>0$  a  bound of $(c_{\gk})_k$ we have 
\begin{equation} \label{eq2} \|x_{\gk}\|_{\infty} \leq \a_0+\sqrt{\bar{M}^2+2}\;\;\;\hbox{and}\;\;\;\int_0^1\|\dot{x}_{\gk}(t)\|^2\sp dt\leq \bar{M}^2+2.\end{equation}
Hence, being equicontinuous and uniformly bounded, $(x_{\gk})_k$  admits a subsequence that converges uniformly to some $x \in W^{1,2}([0,1];\R^n)$ whose values are in $C$ and whose derivative $\dot{x}_{\gk}$ converges weakly in $L^2$ to $\dot{x}$. 
\end{lemma}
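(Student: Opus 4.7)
Before seeing the authors' proof, my plan would proceed in three stages: well-posedness, uniform estimates, and compactness.

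For existence, uniqueness, and invariance of $C$ under $(D_{\gk})$, I would simply invoke \cite[Lemma 4.1]{verachadiimp}, which the paper has just referenced. Under (A1)$_\Vphi$ and (A2.1), the right-hand side of $(D_{\gk})$ is Lipschitz-in-$x$ uniformly in $t$ (since $\psi$ is $\CO^{1,1}$ near $C$ and the exponential is smooth), so Carathéodory theory gives a unique maximal solution for each $(c_{\gk}, u_{\gk})$, and the invariance of $C$ allows continuation up to $t=1$.

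The substantive content is the $L^2$-estimate on $\dot{x}_{\gk}$, for which my plan is a one-line energy identity. Taking the inner product of $(D_{\gk})$ with $\dot{x}_{\gk}(t)$ and recognising the perfect time-derivative
$$\gk\, e^{\gk \psi(x_{\gk}(t))}\,\<\nabla\psi(x_{\gk}(t)),\dot{x}_{\gk}(t)\>\;=\;\frac{d}{dt}\,e^{\gk \psi(x_{\gk}(t))},$$
I obtain
$$\|\dot{x}_{\gk}(t)\|^2\;=\;\<f_\Vphi(x_{\gk}(t),u_{\gk}(t)),\dot{x}_{\gk}(t)\>\;-\;\frac{d}{dt}\,e^{\gk \psi(x_{\gk}(t))}.$$
Integrating on $[0,1]$, dominating the cross-term by Young's inequality with $\|f_\Vphi\|\leq \bar{M}$ from (A1)$_\Vphi$, and using that $\psi(x_{\gk}(t))\leq 0$ throughout (so $e^{\gk \psi(x_{\gk}(0))}-e^{\gk \psi(x_{\gk}(1))}\leq 1$, uniformly in $\gk$), I can absorb $\tfrac12\int\|\dot{x}_{\gk}\|^2$ onto the left-hand side and arrive at $\int_0^1 \|\dot{x}_{\gk}\|^2 dt \leq \bar{M}^2+2$. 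The sup-norm bound then follows instantly from $\|x_{\gk}(t)\|\leq \|c_{\gk}\|+\int_0^t\|\dot{x}_{\gk}\|$ and Cauchy–Schwarz.

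For the subsequence convergence, the $W^{1,2}$-bound gives uniform equicontinuity via $\|x_{\gk}(t)-x_{\gk}(s)\|\leq |t-s|^{1/2}\sqrt{\bar{M}^2+2}$, so Arzelà–Ascoli extracts a subsequence with $x_{\gk}\to x$ uniformly on $[0,1]$, and closedness of $C$ forces $x(t)\in C$. The $L^2$-bound on $\dot{x}_{\gk}$ then lets me pass, within that subsequence, to a further subsequence along which $\dot{x}_{\gk}\rightharpoonup v$ weakly in $L^2$ (Banach–Alaoglu); taking the weak limit in $x_{\gk}(t)=c_{\gk}+\int_0^t \dot{x}_{\gk}(s)\,ds$ by testing against $\chi_{[0,t]}$ identifies $x\in W^{1,2}$ with $\dot{x}=v$. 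The only genuinely nontrivial step is the energy identity itself: the delicate point is that the dangerous $\gk$-dependence in the normal-direction term collapses into a telescoping boundary contribution once it is recognised as $\tfrac{d}{dt}e^{\gk \psi(x)}$, and that telescope is harmless precisely because $C$ is forward-invariant.
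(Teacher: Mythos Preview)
Your proposal is correct. The paper itself does not supply a proof of this lemma at all: it is quoted directly from \cite[Lemma~4.1]{verachadiimp}, so there is no in-paper argument to compare against. That said, your energy-identity computation is almost certainly the argument used in the cited source, since the telescoping of $\frac{d}{dt}e^{\gk\psi(x_{\gk})}$ combined with Young's inequality and the invariance bound $0\le e^{\gk\psi(x_{\gk})}\le 1$ lands exactly on the stated constant $\bar{M}^2+2$ (and hence on $\alpha_0+\sqrt{\bar{M}^2+2}$ via Cauchy--Schwarz). The compactness step is routine and matches what the paper uses elsewhere.
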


The properties of the sets $C(k)$ and  the role of the sequence $(\rho_k)_k$ are estblished in \cite[Theorem 3.1 and Remark 3.6]{verachadiimp}. We enlist here the items that deem important for this paper  when constructing the initial constraint set for the approximating problems $({P}_{\gk})$ and $(P_{\gk})$.

\begin{theorem}\textnormal{{\cite{verachadiimp}}}\label{propck} Under \textnormal{(A2.1)-(A2.3)}, the following assertions hold\sp$:$
\begin{enumerate}[$(i)$,leftmargin=0.8cm] 
\item For all $k$, the set $C(k)\subset \inte C$ and is compact, and, for $k$ sufficiently large, 
\begin{itemize}[leftmargin=0.2cm]
\item  $\bdry C(k)=\{x\in\R^n : \psi(x)=-\a_k\}\;\,\hbox{and}\;\,\inte C=\{x\in\R^n : \psi(x)<-\a_k\};$
\item  $(C(k))_k$ is a nondecreasing sequence whose  Painlev\'e-Kuratowski limit is $C$. \end{itemize}

\item There exist $r_{\bbp o}>0$ and $\bar{k}\in\N$ such that \begin{equation}\label{lastideahope}\left[C \cap \bar{B}_{r_{\bbp o}}\bp ({c})\right]-\rho_k\frac{\nabla\psi(c)}{\|\nabla\psi(c)\|}\subset \inte C(k),\;\;\;\forall\sp k\ge \bar{k}\;\;\;\hbox{and}\;\;\;\forall c\in \bdry C.\end{equation}

\item For $c\in \inte C$, there exist $\hat{k}_c \in \N$ and $\hat{r}_{\bbbp c}>0$  satisfying  
\begin{equation}\label{cinte C}\bar{B}_{\hat{r}_{\bbbp c}}({c})\subset \inte C(\hat{k}_c)\subset \inte C(k), \;\; \forall k\ge \hat{k}_c.\end{equation}
\end{enumerate}
\end{theorem}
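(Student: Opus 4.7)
\textbf{Plan of proof for Theorem \ref{propck}.}

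\emph{Part $(i)$.} For any $k$ and any $x\in C(k)$, I have $\psi(x)\leq -\alpha_{k}<0$, so by Remark \ref{epsilon}$(ii)$, $x\in\inte C$. Since $\psi$ is continuous, $C(k)=\psi^{-1}((-\infty,-\alpha_{k}])\cap C$ is closed, and boundedness follows from the compactness of $C$ in Lemma \ref{prop1}$(i)$. For the boundary and interior characterization, I would fix $\varepsilon>0$ as in \eqref{velo0} and then take $\bar k$ large enough so that $\alpha_{k}<\varepsilon$ for all $k\geq\bar k$; the contrapositive of \eqref{velo0} then yields $\|\nabla\psi(x)\|>\eta$ at every $x$ satisfying $\psi(x)=-\alpha_{k}$, so by the implicit function theorem/open mapping principle, the level set $\{\psi=-\alpha_{k}\}$ is exactly the topological boundary of $\{\psi\leq-\alpha_{k}\}$ within $C+\rho B$, and the sublevel set $\{\psi<-\alpha_{k}\}$ is its interior (it is also contained in $\inte C$ by Remark \ref{epsilon}$(ii)$). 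Monotonicity of $(C(k))_{k}$ is immediate from $\alpha_{k}\searrow$. For Painlev\'e--Kuratowski convergence, $\limsup_{k}C(k)\subset C$ is trivial since each $C(k)\subset C$; for the reverse inclusion $C\subset\liminf_{k}C(k)$, given $x\in C=\clo(\inte C)$ (Lemma \ref{prop1}$(i)$) pick $y_{j}\in\inte C$ with $y_{j}\to x$ so $\psi(y_{j})<0$, and since $\alpha_{k}\to 0$ each $y_{j}$ lies in $C(k)$ for $k$ sufficiently large; a standard diagonal extraction produces the desired sequence.

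\emph{Part $(ii)$.} This is the main technical point. Fix $c\in\bdry C$ and write $\nu_{c}=\nabla\psi(c)/\|\nabla\psi(c)\|$; the target is to show $\psi\bigl(x-\rho_{k}\nu_{c}\bigr)<-\alpha_{k}$ uniformly in $c$ and in $x\in C\cap\bar B_{r_{o}}(c)$. I would write
\begin{equation*}
\psi(x-\rho_{k}\nu_{c})=\psi(x)-\rho_{k}\int_{0}^{1}\bigl\langle \nabla\psi(x-s\rho_{k}\nu_{c}),\nu_{c}\bigr\rangle\,ds,
\end{equation*}
use $\|\nabla\psi(c)\|>2\eta$ together with the $2M_{\psi}$-Lipschitz property of $\nabla\psi$ on $C+\tfrac{\rho}{2}\bar B$ to bound the integrand below by $2\eta-2M_{\psi}(\|x-c\|+\rho_{k})\geq 2\eta-2M_{\psi}(r_{o}+\rho_{k})$, and exploit $\psi(x)\leq 0$. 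This yields $\psi(x-\rho_{k}\nu_{c})\leq -2\eta\rho_{k}+2M_{\psi}\rho_{k}(r_{o}+\rho_{k})$. Recalling $\rho_{k}=\alpha_{k}/\eta$, the bound becomes $\alpha_{k}\bigl[-2+\tfrac{2M_{\psi}}{\eta}(r_{o}+\rho_{k})\bigr]$, which is strictly less than $-\alpha_{k}$ as soon as $r_{o}+\rho_{k}<\eta/(2M_{\psi})$. Choosing, for instance, $r_{o}:=\eta/(4M_{\psi})$ and $\bar k$ large enough that $\rho_{k}<\eta/(4M_{\psi})$ and $\alpha_{k}<\varepsilon$ for all $k\geq\bar k$ delivers the inclusion into $\{\psi<-\alpha_{k}\}=\inte C(k)$ (using part $(i)$), with $r_{o}$ and $\bar k$ independent of $c\in\bdry C$. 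I would also check that the segment $[x-s\rho_{k}\nu_{c}:s\in[0,1]]$ stays in $C+\tfrac{\rho}{2}\bar B$ so the Lipschitz estimate on $\nabla\psi$ applies, which follows since $\rho_{k}\to 0$.

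\emph{Part $(iii)$.} For $c\in\inte C$, Remark \ref{epsilon}$(ii)$ gives $\psi(c)<0$; setting $\delta:=-\psi(c)>0$ and using continuity of $\psi$, I obtain $\hat r_{c}>0$ with $\psi(y)\leq -\delta/2$ for all $y\in\bar B_{\hat r_{c}}(c)$. Choosing $\hat k_{c}$ so that $\alpha_{\hat k_{c}}<\delta/2$ (possible since $\alpha_{k}\searrow 0$) and large enough for part $(i)$ to apply yields $\psi(y)<-\alpha_{k}$ for all $y\in\bar B_{\hat r_{c}}(c)$ and all $k\geq\hat k_{c}$, i.e., $\bar B_{\hat r_{c}}(c)\subset\inte C(\hat k_{c})\subset\inte C(k)$ by the monotonicity in part $(i)$.

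\emph{Anticipated difficulty.} The delicate point is part $(ii)$: the constants $r_{o}$ and $\bar k$ must be \emph{independent} of $c\in\bdry C$, which forces me to quantify everything in terms of the uniform constants $\eta$ (from (A2.2)) and $M_{\psi}$ (from (A2.1)), and to arrange that the linear gain $-2\eta\rho_{k}$ genuinely dominates both the Lipschitz error and the threshold $-\alpha_{k}$; this is exactly why the specific scaling $\rho_{k}=\alpha_{k}/\eta$ together with $M_{\psi}\geq 4\eta/\rho$ is tailored.
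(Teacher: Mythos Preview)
The paper does not actually prove this theorem: it is quoted from \cite[Theorem 3.1 and Remark 3.6]{verachadiimp} and stated here without argument, so there is no ``paper's own proof'' to compare against. Your proposal is correct and is the natural line of attack; in particular, your treatment of part $(ii)$ --- the Taylor expansion of $\psi$ along the segment $s\mapsto x-s\rho_{k}\nu_{c}$, the lower bound $\langle\nabla\psi(\cdot),\nu_{c}\rangle\geq 2\eta-2M_{\psi}(r_{o}+\rho_{k})$ via (A2.2) and the $2M_{\psi}$-Lipschitz property of $\nabla\psi$, and the choice $r_{o}=\eta/(4M_{\psi})$ so that the resulting bound $\alpha_{k}\bigl[-2+\tfrac{2M_{\psi}}{\eta}(r_{o}+\rho_{k})\bigr]<-\alpha_{k}$ --- is exactly the kind of quantitative argument one expects in \cite{verachadiimp}.

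Two minor remarks. First, the displayed statement contains an obvious typo (``$\inte C=\{x:\psi(x)<-\alpha_{k}\}$'' should read $\inte C(k)$), which you have silently and correctly corrected. Second, your closing comment that the normalization $M_{\psi}\geq 4\eta/\rho$ is ``tailored'' to part $(ii)$ is not quite right: your own argument never uses it, since the segment $\{x-s\rho_{k}\nu_{c}\}$ lies in $C+\tfrac{\rho}{2}\bar B$ simply because $x\in C$ and $\rho_{k}\to 0$; that normalization is a harmless convenience elsewhere in \cite{verachadiimp}, not an ingredient here.
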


\begin{remark} \label{intelemma} 
 From Theorem \ref{propck}, it follows that  for any $c\in C$, there exists a sequence $({c}_k)_k$ such that, for $k$ large enough, 
${c}_k\in \inte C(k)$ and $c_k\f c$. Indeed, for $c\in \bdry C$, take $c_k:= c- \rho_k\frac{\nabla\psi({c})}{\|\nabla\psi({c})\|}$ for all $k$, and  for $c\in \inte C$, take $c_k= c$  for all $k$.
\end{remark} 

The following theorem will be used  repeatedly in this paper. It is a special case of  \cite[Theorem 4.1 \& Lemma 4.2]{verachadiimp}. It  provides  a sufficient condition for the uniform limit $x$ of the  solution $x_{\gk}$ of $(D_{\gk})$  to be a solution of ($D$),  and it connects  the multiplier function $\xi$  corresponding  to $x$, via Lemma \ref{characterizationD},  to the  positive continuous penalty multiplier $\xi_{\gk}$, associated with  $x_{\gk}$  and defined by
 \begin{equation} \label{defxi} \xi_{\gk}(\cdot):= \gk e^{\gk\psi(x_{\gk}(\cdot))}.\end{equation}
 
\begin{theorem}[$(D_{\gk})_k$ \& $\xi_{\gk}$ approximate $(D)$ \& $\xi$] \label{prop2} Assume that \textnormal{(A1)-(A4.1)} hold. Let   $x_{\gk}$  be the solution of   $(D_{\gk})$  corresponding to $(c_{\gk},u_{\gk})$, as in \textnormal{Lemma \ref{invariance}}, and $x\in W^{1,2}([0,1];\R^n)$ be its uniform limit.
Then, the following statements are valid$\,:$
\begin{enumerate}[$(i)$,leftmargin=0.8cm] 
 \item The sequence   $(\xi_{\gk})_k$ admits a subsequence, we do not relabel,  that converges weakly in $L^2$ to  a nonnegative function $\xi\in L^2$ supported on $I^{0}(x)$.
\item If  for some  ${u}\in \mathscr{U}$, the sequence $u_{\gk}(t)\xrightarrow[]{\textnormal{a.e}.\;t\,} u(t)$, then $x$ is the unique solution of $(D)$ corresponding to $(x_0,u)$, and  $(x,u, \xi)$ satisfies equations \eqref{admissible-P}-\eqref{boundxi}. 
In particular, $\xi\in L^{\infty}([0,1];\R^{+})$ and is supported on $I^0(x)$.
\end{enumerate}
\end{theorem}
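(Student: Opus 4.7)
The plan is to prove (i) by obtaining a uniform $L^2$ bound on the penalty multipliers $\xi_{\gk}$, extracting a weakly convergent subsequence, and then showing that the weak limit vanishes off $I^0(x)$; then to prove (ii) by passing to the limit in the ODE defining $x_{\gk}$ and invoking Lemma \ref{characterizationD}.

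For the $L^2$ bound, I would rewrite the dynamic of $(D_{\gk})$ as
\begin{equation*}
\xi_{\gk}(t)\,\nabla\psi(x_{\gk}(t)) = f_{\Vphi}(x_{\gk}(t),u_{\gk}(t))-\dot{x}_{\gk}(t),
\end{equation*}
so that by (A1)$_{\Vphi}$ and \eqref{eq2} the left-hand side is bounded in $L^2$ uniformly in $k$. To get back to $\xi_{\gk}$ itself, I would split $[0,1]$ into $A_k:=\{t:\|\nabla\psi(x_{\gk}(t))\|\ge \eta\}$ and its complement. On $A_k$ the bound $\xi_{\gk}^2\le \eta^{-2}\|\xi_{\gk}\nabla\psi(x_{\gk})\|^2$ yields a uniform integral estimate. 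On $[0,1]\setminus A_k$, Remark \ref{epsilon}$(i)$ gives some $\e>0$ with $\psi(x_{\gk})<-\e$, so $\xi_{\gk}\le \gk e^{-\gk\e}\to 0$ and this part contributes vanishingly. Thus $(\xi_{\gk})_k$ is bounded in $L^2$ and, being nonnegative, a subsequence converges weakly in $L^2$ to some $\xi\ge 0$. For the support claim, fix $t_0\in I^{\-}(x)$; by continuity of $\psi\circ x$ there is a neighborhood on which $\psi(x)\le -\e_0<0$, hence by the uniform convergence $x_{\gk}\to x$ and continuity of $\psi$, $\psi(x_{\gk})\le -\e_0/2$ there for $k$ large, giving $\xi_{\gk}\le \gk e^{-\gk\e_0/2}\to 0$ uniformly on that neighborhood. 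Therefore $\xi=0$ a.e.\ on $I^{\-}(x)$.

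For (ii), I would pass to the limit in
\begin{equation*}
\dot{x}_{\gk}(t)=f_{\Vphi}(x_{\gk}(t),u_{\gk}(t))-\xi_{\gk}(t)\,\nabla\psi(x_{\gk}(t)).
\end{equation*}
Since $x_{\gk}\to x$ uniformly and $\nabla\psi$ is continuous on $C+\rho B$, the vector field $\nabla\psi(x_{\gk})\to\nabla\psi(x)$ uniformly (strongly in $L^{\infty}$). Combined with the weak $L^2$ convergence $\xi_{\gk}\rightharpoonup\xi$ from (i), the product $\xi_{\gk}\nabla\psi(x_{\gk})\rightharpoonup\xi\,\nabla\psi(x)$ weakly in $L^2$ by the standard strong-weak product lemma. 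On the other hand, the assumption $u_{\gk}(t)\to u(t)$ a.e.\ together with the uniform convergence of $x_{\gk}$ and the continuity of $f_{\Vphi}$ gives $f_{\Vphi}(x_{\gk},u_{\gk})\to f_{\Vphi}(x,u)$ a.e., and by the uniform bound in (A1)$_{\Vphi}$ and dominated convergence, strongly in $L^2$. Since $\dot{x}_{\gk}\rightharpoonup\dot{x}$ weakly in $L^2$ by Lemma \ref{invariance}, identifying the weak limits yields
\begin{equation*}
\dot{x}(t)=f_{\Vphi}(x(t),u(t))-\xi(t)\,\nabla\psi(x(t)) \quad \textnormal{a.e.}\;t\in[0,1].
\end{equation*}
Because $\xi\ge 0$ is supported on $I^0(x)$ from (i), Lemma \ref{characterizationD} immediately gives that $x$ is a solution of $(D)$ corresponding to $(x_0,u)$, and that $\xi$ is precisely the unique nonnegative multiplier associated with $(x,u)$ via \eqref{admissible-P}. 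The $L^{\infty}$-bound and uniqueness of $\xi$ then follow from \eqref{boundxi}, while uniqueness of $x$ is the standard sweeping-process uniqueness result valid since $C$ is $\tfrac{\eta}{M_\psi}$-prox-regular by Lemma \ref{prop1}$(i)$.

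The main obstacle I anticipate is securing the uniform $L^2$ bound on $\xi_{\gk}$: the decomposition of $[0,1]$ via the size of $\|\nabla\psi(x_{\gk})\|$ is the delicate step, since it relies on the implication \eqref{velo0} to ensure exponential decay of $\xi_{\gk}$ precisely where the division by $\|\nabla\psi(x_{\gk})\|$ would otherwise fail. Once that bound is in hand, the remaining work is a careful bookkeeping of strong/weak convergences and an appeal to the characterization lemma.
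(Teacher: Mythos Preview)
Your proposal is correct and follows the natural route. The paper itself does not prove this theorem; it is imported as a special case of \cite[Theorem 4.1 \& Lemma 4.2]{verachadiimp}. The ingredients you use---the decomposition of $[0,1]$ according to whether $\|\nabla\psi(x_{\gk}(t))\|$ exceeds $\eta$ combined with \eqref{velo0} to control the penalty term, the strong-weak product argument for $\xi_{\gk}\nabla\psi(x_{\gk})$, dominated convergence for $f_{\Vphi}(x_{\gk},u_{\gk})$, and the appeal to Lemma~\ref{characterizationD} once \eqref{admissible-P} and the support condition on $\xi$ are in hand---are precisely the techniques the paper deploys elsewhere (see the proof of Theorem~\ref{prop3new}$(ii)$ and Remark~\ref{newproof}), so your argument is almost certainly the same as the one in the cited reference.
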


 \begin{remark}\label{newproof} Note that   when establishing Theorem \ref{prop2}$(ii)$ in \cite{verachadiimp},  the  arguments used to prove that $(x,u,\xi)$ satisfies \eqref{admissible-P}  are independent of  having  $\xi_{\gk}$ defined through \eqref{defxi}, and hence, this proof is valid for $\xi_{\gk}$  being any sequence of  $L^2$-functions  converging weakly in $L^2$ to $\xi$. Therefore, we have that  $(x,u,\xi)$ satisfies \eqref{admissible-P}  whenever $(x_j,u_j,\xi_j)_j$  is a sequence solving \eqref{admissible-P} with $x_j$ converging uniformly to $x$, $u_j(t)$ converging pointwise a.e. to $u(t)$, and $\xi_j$ converging weakly in $L^2$ to $\xi$. 
\end{remark}
The following result  is extracted from \cite[Theorem 5.1]{verachadiimp}, in which more properties are derived. It reveals the significance of initiating in Theorem  \ref{prop2} the solutions $x_{\gk}$ of $(D_{\gk})$ from the subset $C(k)$, defined in (\ref{C(k)}). 

\begin{theorem}[$x_{\gk}\in {C(k)}$, $\dot{x}_{\gk}$ \& $\xi_{\gk}$ bounded] \label{propooufnew} Assume \textnormal{(A1)-(A4.1)} hold. Let $(c_{\gk})_k$ be a sequence such that $c_{\gk}\in C(k)$, for $k$ sufficiently large. Then there exists ${k}_{o}\in\N$ such that for all sequences $(u_{\gk})_k$ in $\mathscr{U}$ and for all $k\geq {k}_{o}$, the solution $x_{\gk}$ of $(D_{\gk})$ corresponding to $(c_{\gk},u_{\gk})$ satisfies\sp$:$
\begin{enumerate}[$(i)$,leftmargin=0.8cm] 
\item $x_{\gk}(t)\in C(k)\subset \inte C$ for all $t\in[0,1]$.
\item $ 0\le \xi_{\gk}(t)\le \frac{2\bar{M}}{\eta}$ for all $t\in [0,1]$.
\item $ \|\dot{x}_{\gk}(t)\|\leq \bar{M}+\frac{2\bar{M}\bar{M}_\psi}{\eta}$ \,for \textnormal{a.e.} $t\in [0,1]$.
\end{enumerate}
\end{theorem}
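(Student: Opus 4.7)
The plan is to derive (i) first via an invariance argument precisely calibrated to the choice $\gk e^{-\gk\a_k}=\frac{2\bar M}{\eta}$ in \eqref{velo2bis}; once $x_{\gk}(t)\in C(k)$ is established, properties (ii) and (iii) follow from immediate substitutions. Set $h(t):=\psi(x_{\gk}(t))$. By Lemma \ref{invariance}, $x_{\gk}(t)\in C$ for all $t$, so $h$ is well defined and, since $\psi\in\mathcal{C}^{1,1}$ on a neighborhood of $C$ while $x_{\gk}\in W^{1,2}$, the function $h$ is absolutely continuous with
\[
\dot h(t)=\langle\nabla\psi(x_{\gk}(t)),f_\Vphi(x_{\gk}(t),u_{\gk}(t))\rangle-\gk e^{\gk h(t)}\|\nabla\psi(x_{\gk}(t))\|^{2}\;\;\text{a.e.}\;t.
\]

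Next, I would pick $k_o$ large enough that $\a_k<\e$ for all $k\ge k_o$, where $\e>0$ is the constant from Remark \ref{epsilon}$(i)$, and fix such a $k$. At any $t$ where $h(t)\ge-\a_k$, we have $h(t)>-\e$, so the contrapositive of \eqref{velo0} yields $\|\nabla\psi(x_{\gk}(t))\|>\eta$, while $\gk e^{\gk h(t)}\ge\gk e^{-\gk\a_k}=\frac{2\bar M}{\eta}$ by \eqref{velo2bis} and $\|f_\Vphi\|\le\bar M$ by (A1)$_\Vphi$. Substituting into the formula for $\dot h$ gives
\[
\dot h(t)\le\|\nabla\psi(x_{\gk}(t))\|\Bigl(\bar M-\frac{2\bar M}{\eta}\|\nabla\psi(x_{\gk}(t))\|\Bigr)<-\bar M\eta<0.
\]
Suppose, for contradiction, that there is $t^*\in(0,1]$ with $h(t^*)>-\a_k$. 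Since $h(0)=\psi(c_{\gk})\le-\a_k$ and $h$ is continuous, the set $\{s\in[0,t^*]:h(s)=-\a_k\}$ is nonempty (by the intermediate value theorem if $h(0)<-\a_k$, or because it contains $0$ otherwise); let $a$ be its supremum, so $a<t^*$ and $h>-\a_k$ on $(a,t^*]$. Integrating the strict pointwise inequality $\dot h<0$ over $(a,t^*)$ then yields $h(t^*)<h(a)=-\a_k$, a contradiction. Hence $h\le-\a_k$ on $[0,1]$, proving (i). Part (ii) is then immediate since $\xi_{\gk}(t)=\gk e^{\gk h(t)}\in[0,\gk e^{-\gk\a_k}]=[0,\tfrac{2\bar M}{\eta}]$, and (iii) is read off the equation of $(D_{\gk})$ by combining (ii) with $\|f_\Vphi\|\le\bar M$ and $\|\nabla\psi(\cdot)\|\le\bar M_\psi$ on $C$.

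The main obstacle is pinpointing the correct quantitative balance in the invariance step. The calibration $\gk e^{-\gk\a_k}=\frac{2\bar M}{\eta}$ alone is not enough: one needs the sharp lower bound $\|\nabla\psi\|>\eta$ on the slab $\{\psi\ge-\a_k\}$ (not merely $\|\nabla\psi\|>0$) for the inward push to dominate the drift $f_\Vphi$. This sharper bound is precisely the content of Remark \ref{epsilon}$(i)$, which forces $k$ to be taken large enough to ensure $\a_k<\e$ and thereby identifies the explicit threshold $k_o$ appearing in the statement.
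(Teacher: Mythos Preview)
Your argument is correct. The paper does not supply its own proof of this theorem but cites it as extracted from \cite[Theorem 5.1]{verachadiimp}, so there is no in-paper proof to compare against directly; that said, your invariance argument via the Lyapunov function $h(t)=\psi(x_{\gk}(t))$, calibrated through \eqref{velo2bis} and the gradient lower bound from Remark~\ref{epsilon}$(i)$, is the natural route and almost certainly coincides with the cited proof.
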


  The next result is a  simplified version  of \cite[Corollary 5.1]{verachadiimp}. It is the converse of Theorem \ref{prop2},  as it  confirms that any given solution of ($D$) is approximated by  a solution of ($D_{\gk}$) that remains in the {\it interior} of $C$
  and enjoys all the properties displayed in Theorem \ref{propooufnew}.   
  
\begin{corollary}[Solutions of ($D$) are approximated by   sequences in $ C(k)$] \label{interiorrem} Assume that \textnormal{(A1)-(A4.1)} are satisfied. Let  $\x$ be the solution of  $(D)$ corresponding to  $(\x(0),\u)\in C_0\times \mathscr{U}$. Consider  $(\bar{c}_{\gk})_k$  the sequence in \textnormal{Remark \ref{intelemma}} that converges to $c:=\x(0)$, and $\x_{\gk}$ the solution of $(D_{\gk})$ corresponding to $(\bar{c}_{\gk},\u)$. Then,  there exists  $\hat{k}_{o} \in \N$ such that $\x_{\gk}$ and its associated $\bar{\xi}_{\gk}$ via  \textnormal{(\ref{defxi})} satisfy the conclusions $(i)$-$(iii)$ of \textnormal{Theorem \ref{propooufnew}} for all $k\ge \hat{k}_{o}$, and the following  holds true$\sp:$ The sequence $\x_{\gk}$ admits a subsequence, we do not relabel, that converges uniformly to $\x$, the corresponding subsequence for $\bar{\xi}_{\gk}$  converges weakly  in $L^2$ to some $\bar{\xi}\in L^{\infty},$ and $(\x,\u,\bar{\xi})$ satisfies \eqref{admissible-P}-\eqref{boundxi}. That is, $\bar{\xi}$ is the unique function corresponding to $(\x,\u)$ via \textnormal{Lemma \ref{characterizationD}}.
\end{corollary}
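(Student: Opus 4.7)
The plan is to combine Theorem~\ref{propooufnew}, which supplies quantitative bounds when trajectories of $(D_{\gk})$ start inside $C(k)$, with Theorem~\ref{prop2}, which passes to the limit from $(D_{\gk})$ to $(D)$; the only preliminary task will be to check that the specific initial-point sequence produced in Remark~\ref{intelemma} lies in $C(k)$ for all $k$ sufficiently large.

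First, applying Remark~\ref{intelemma} to $c:=\x(0)\in C_0\subset C$ yields a sequence with $\bar{c}_{\gk}\to \x(0)$ and $\bar{c}_{\gk}\in \inte C(k)\subset C(k)$ for all $k$ large. With this initial datum and the constant control sequence $u_{\gk}\equiv \u\in\mathscr{U}$, Theorem~\ref{propooufnew} then produces an index $\hat{k}_{o}\ge k_{o}$ such that, for every $k\ge \hat{k}_{o}$, the unique solution $\x_{\gk}$ of $(D_{\gk})$ corresponding to $(\bar{c}_{\gk},\u)$ and the penalty multiplier $\bar{\xi}_{\gk}$ defined in \eqref{defxi} satisfy the three estimates $(i)$-$(iii)$ of that theorem. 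This already gives the first assertion of the corollary.

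Next, the uniform bounds in $(i)$-$(iii)$ render $(\x_{\gk})_k$ equicontinuous and uniformly bounded, so Arzel\`a-Ascoli together with Lemma~\ref{invariance} deliver a subsequence (not relabeled) that converges uniformly to some $x\in W^{1,2}([0,1];\R^n)$, with $\dot{\x}_{\gk}\rightharpoonup \dot{x}$ weakly in $L^2$. The bound $0\le \bar{\xi}_{\gk}\le \frac{2\bar{M}}{\eta}$ provides weak $L^2$-precompactness, so along a further subsequence $\bar{\xi}_{\gk}\rightharpoonup \bar{\xi}$ in $L^2$ for some nonnegative $\bar{\xi}\in L^{\infty}$ satisfying $\|\bar{\xi}\|_{\infty}\le \frac{2\bar{M}}{\eta}$.

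Finally, Theorem~\ref{prop2}(ii) is applicable with the trivially pointwise-a.e.\ convergent control sequence $u_{\gk}\equiv \u$: the uniform limit $x$ is the unique solution of $(D)$ corresponding to $(x(0),\u)$, and $(x,\u,\bar{\xi})$ satisfies \eqref{admissible-P}-\eqref{boundxi}. Since the uniform convergence of $\x_{\gk}$ combined with $\x_{\gk}(0)=\bar{c}_{\gk}\to \x(0)$ forces $x(0)=\x(0)$, the uniqueness clause yields $x=\x$, and hence $\bar{\xi}$ is precisely the function attached to $(\x,\u)$ through Lemma~\ref{characterizationD}. No substantive obstacle is anticipated: the argument is essentially bookkeeping, and the only delicate point is the verification that the initial-point sequence sits in $C(k)$ for $k$ large, which is immediate from Theorem~\ref{propck}.
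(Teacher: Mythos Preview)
Your proposal is correct and follows exactly the route one would expect: verify via Remark~\ref{intelemma} that $\bar c_{\gk}\in C(k)$ for large $k$, invoke Theorem~\ref{propooufnew} for the uniform bounds $(i)$--$(iii)$, extract convergent subsequences via Lemma~\ref{invariance} and the $L^\infty$-bound on $\bar\xi_{\gk}$, and then apply Theorem~\ref{prop2}$(ii)$ with the constant control sequence $u_{\gk}\equiv\u$ together with uniqueness of solutions to $(D)$ to identify the limit as $\x$. The paper itself does not supply an in-text proof of this corollary (it is imported as a simplified version of \cite[Corollary~5.1]{verachadiimp}), but your argument is precisely the assembly of Theorems~\ref{propooufnew} and~\ref{prop2} that the statement is designed to record.
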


\section{Main results} \label{mainresults}
 This section consists of the main results of this paper, namely, the strong approximation of $(D)$ by $(D_{\gk})$ whenever the control is  ${W}^{1,2}$ (Theorem \ref{prop3new}  and Corollary \ref{newstrongconv}), an existence theorem for an optimal solution of $(P)$ (Theorem \ref{exop}), a strong converging continuous approximation for $(P)$ (Theorem \ref{lasthopefullybis}), and nonsmooth necessary optimality conditions in the form of weak-Pontryagin-type maximum principle (Theorem \ref{mpw12bisw12}).

 \subsection{$(D_{\gk})$ strongly approximates $(D)$ with $W^{1,2}$-controls} The following theorem constitutes the backbone of this paper. It shows that, when the underlying  control space  is  $\mathscr{W}$ (defined in \eqref{setw}),  the velocities  $\dot{x}_{\gk}$ and the functions  $\xi_{\gk}$  corresponding to  the approximating sequence ${x}_{\gk}$  in  Theorem \ref{propooufnew}, converge   {\it strongly} in $L^2$ to, respectively, $\dot{x}$ and  $\xi$, the functions obtained in  Theorem \ref{prop2}.  The proof of this theorem is postponed to Section \ref{auxresults}.
 
 \begin{theorem}[Strong convergence of the velocity sequence $\dot{x}_{\gk}$] \label{prop3new} Let the assumptions \textnormal{(A1)-(A4.2)} be satisfied. Consider  a sequence  $x_{\gk}$ solving $(D_{\gk})$ for some $(c_{\gk},u_{\gk})$, where $c_{\gk}\in C$, $c_{\gk}\f x_0\in C_0$,  $u_{\gk}\in \mathscr{W}$, and  $(\|\dot{u}_{\gk}\|_2)_k$  is bounded. Denote by $(x,\xi)$ the pair in $W^{1,2}\times L^2$ obtained via \textnormal{Lemma  \ref{invariance}} and \textnormal{Theorem  \ref{prop2}}$(i)$ such that a subsequence $($not relabeled$\,)$ of $(x_{\gk},\xi_{\gk})$ has $x_{\gk} $  converging uniformly in the set $C$ to $x$   and $(\dot{x}_{\gk},\xi_{\gk})$  converging weakly in $L^2$ to $(\dot{x},\xi)$. Then, the following hold\sp$:$
\begin{enumerate}[$(i)$,leftmargin=0.8cm] 
\item There exist  a subsequence $($not relabeled$\,)$ of  $u_{\gk}$, and $u\in \mathscr{W}$ such that $(x_{\gk}, u_{\gk})$ converges uniformly to $(x,u)$, and $(\dot{x}_{\gk},\dot{u}_{\gk},\xi_{\gk})$ converges weakly in $L^2$ to $(\dot{x},\dot{u},\xi)$.  The function $x$ is the unique solution to $(D)$ corresponding to $(x_0,u)$, and  $(x, u,\xi)$ satisfies \textnormal{\eqref{admissible-P}-\eqref{boundxi}}. In particular, $\xi\in L^{\infty}$ and is supported on $I^0(x)$.
\item Assume that $c_{\gk}\in C(k)$, for $k$ large. Then, in addition to  the conclusions in \textnormal{Theorem \ref{propooufnew}}, the  following holds\sp$:$ The sequence $(\dot{x}_{\gk}, \xi_{\gk})$ is in  $ W^{1,2}([0,1];\R^n)\times W^{2,2}([0,1]; \R^{+})$, has uniform bounded variations, and admits a subsequence, not relabeled, that converges pointwise, and hence, strongly in $L^2$  to $(\dot{x},\xi)$, with $\dot{x}\in BV([0,1];\R^n)$ and  $\xi\in BV([0,1];\R^+)$. In this case, \eqref{admissible-P}-\eqref{boundxi} hold for all  $ t\in[0,1],$  and  $x_{\gk}\f x$ strongly in the norm topology of $W^{1,2}([0,1];\R^n).$
\end{enumerate}
\end{theorem}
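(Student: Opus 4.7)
For part $(i)$, the strategy is to leverage $W^{1,2}$-compactness of the controls. The assumption yields that $(u_{\gk})_k$ is bounded in $W^{1,2}([0,1];\R^m)$, since $u_{\gk}(t)\in U(t)\subseteq \mathbb{U}$ is uniformly bounded by (A4.2) and $\|\dot u_{\gk}\|_2$ is bounded by hypothesis. The compact embedding $W^{1,2}\hookrightarrow C$ extracts, along a subsequence (not relabeled), a uniform limit $u$, with $\dot u_{\gk}\rightharpoonup \dot u$ in $L^2$, so $u\in W^{1,2}$. Pointwise closedness of $U(t)$ passes through the uniform limit to force $u(t)\in U(t)$ for every $t\in[0,1]$, hence $u\in\mathscr{W}$. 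Uniform (and thus pointwise) convergence $u_{\gk}\to u$ then allows the direct application of Theorem \ref{prop2}$(ii)$ to identify $x$ as the unique solution of $(D)$ for $(x_0,u)$ and to produce a triple $(x,u,\xi)$ satisfying \eqref{admissible-P}--\eqref{boundxi}, with $\xi\in L^\infty$ supported on $I^0(x)$.

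For part $(ii)$, the additional hypothesis $c_{\gk}\in C(k)$ triggers Theorem \ref{propooufnew}, giving $x_{\gk}(t)\in C(k)\subset\inte C$ along with the uniform $L^\infty$-bounds $0\le\xi_{\gk}\le 2\bar{M}/\eta$ and $\|\dot x_{\gk}\|_\infty\le \bar{M}+2\bar{M}\bar{M}_\psi/\eta$. Since $x_{\gk}$ stays strictly inside $C$, the penalty term $\gk e^{\gk\psi(x_{\gk})}\nabla\psi(x_{\gk})$ is a smooth function of the space variable near the trajectory. Combined with $u_{\gk}\in W^{1,2}$ and the $\CO^{1,1}$-regularity of $\psi$, term-by-term differentiation of $(D_{\gk})$ in $t$ places $\dot x_{\gk}$ in $W^{1,2}$ with $L^2$-derivative, while differentiating $\xi_{\gk}=\gk e^{\gk\psi(x_{\gk})}$ twice analogously yields $\xi_{\gk}\in W^{2,2}$.

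The crux is the uniform bounded-variation estimate. From the identity
\[
\dot\xi_{\gk}=\gk\xi_{\gk}\bigl[\langle\nabla\psi(x_{\gk}),f_\Vphi(x_{\gk},u_{\gk})\rangle-\xi_{\gk}\|\nabla\psi(x_{\gk})\|^2\bigr],
\]
the prefactor $\gk$ is what must be tamed. I plan a two-region argument based on \eqref{velo0}: on the set $\{t:\|\nabla\psi(x_{\gk}(t))\|\le \eta\}$, \eqref{velo0} forces $\psi(x_{\gk})<-\e$, so $\xi_{\gk}\le \gk e^{-\gk\e}\to 0$ and the exponential decay absorbs the factor $\gk$; on the complementary set, $\|\nabla\psi(x_{\gk})\|^2\ge \eta^2$, so the \emph{equilibrium} function $\xi_{\gk}^{\star}:=\langle\nabla\psi,f_\Vphi\rangle/\|\nabla\psi\|^2$ is Lipschitz in $t$ uniformly in $k$ (hence uniformly BV), and one recasts the equation as the stiff relaxation $\dot\xi_{\gk}=-\gk\|\nabla\psi\|^2\xi_{\gk}(\xi_{\gk}-\xi_{\gk}^{\star})$, which forces the total variation of $\xi_{\gk}$ to be comparable to that of $(\xi_{\gk}^{\star})_+$ with an error that vanishes as $\gk\to\infty$. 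This stage is the main technical obstacle I anticipate. Uniform BV of $\dot x_{\gk}$ then transfers from $\dot x_{\gk}=f_\Vphi(x_{\gk},u_{\gk})-\xi_{\gk}\nabla\psi(x_{\gk})$, because the $W^{1,2}$-bound on $(x_{\gk},u_{\gk})$ makes $f_\Vphi(x_{\gk},u_{\gk})$ uniformly BV.

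With uniform BV in hand, Helly's selection theorem extracts subsequences of $(\dot x_{\gk},\xi_{\gk})$ that converge pointwise on $[0,1]$ to BV limits. Weak $L^2$ uniqueness against the limits $\dot x$ and $\xi$ produced by Lemma \ref{invariance} and Theorem \ref{prop2}$(i)$ identifies the pointwise limits with $\dot x$ and $\xi$ almost everywhere. The uniform $L^\infty$-bounds together with the dominated convergence theorem then upgrade pointwise to strong $L^2$ convergence, and combined with the uniform convergence $x_{\gk}\to x$ this yields $x_{\gk}\to x$ in the $W^{1,2}$-norm topology. Passing to pointwise limits in $\dot x_{\gk}(t)=f_\Vphi(x_{\gk}(t),u_{\gk}(t))-\xi_{\gk}(t)\nabla\psi(x_{\gk}(t))$ at every $t\in[0,1]$ finally establishes \eqref{admissible-P}--\eqref{boundxi} pointwise throughout the entire interval.
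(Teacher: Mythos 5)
Part $(i)$ of your argument is essentially identical to the paper's: $W^{1,2}$-boundedness of $(u_{\gk})_k$, Arzelà--Ascoli plus pointwise closedness of $U(t)$ to produce $u\in\mathscr{W}$, then Theorem~\ref{prop2}$(ii)$. No issue there.

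In part $(ii)$ you correctly isolate the crux (uniform BV of $(\xi_{\gk})_k$) and correctly compute the algebraic form
$\dot\xi_{\gk}=\gk\xi_{\gk}\bigl[\langle\nabla\psi(x_{\gk}),f_\Vphi(x_{\gk},u_{\gk})\rangle-\xi_{\gk}\|\nabla\psi(x_{\gk})\|^2\bigr]$, but the plan you then sketch is not a proof: you reformulate the problem as a stiff relaxation $\dot\xi_{\gk}=-\gk\|\nabla\psi\|^2\,\xi_{\gk}(\xi_{\gk}-\xi_{\gk}^\star)$ and assert that the total variation of $\xi_{\gk}$ is ``comparable to'' that of $(\xi_{\gk}^\star)_+$ up to a vanishing error. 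That claim is exactly what needs to be proved, and it is not a routine fact about singularly perturbed ODEs: the sets $\{\|\nabla\psi(x_{\gk}(t))\|\le\eta\}$ and its complement can alternate infinitely often in $t$, each boundary crossing can create a boundary layer of width $O(1/\gk)$, and one must rule out that the total variation accumulates over these layers. You yourself flag this as the ``main technical obstacle'' you anticipate, so the gap is acknowledged rather than closed.

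The paper does something different and much more direct. Instead of the relaxation equation, it differentiates the nonnegative scalar $t\mapsto|\langle\nabla\psi(x_{\gk}(t)),\dot x_{\gk}(t)\rangle|$. After substituting $\ddot x_{\gk}$ from the differentiated dynamics, the resulting identity expresses this derivative as a sum of terms that are uniformly bounded in $L^1$ (using the $W^{1,2}$ bound on $f_\Vphi(x_{\gk},u_{\gk})$, Theorem~\ref{propooufnew}, and (A2.1)) \emph{minus} the single dangerous term
$\gk\,\xi_{\gk}(t)\,|\langle\nabla\psi(x_{\gk}(t)),\dot x_{\gk}(t)\rangle|\,\|\nabla\psi(x_{\gk}(t))\|^2 = |\dot\xi_{\gk}(t)|\,\|\nabla\psi(x_{\gk}(t))\|^2$,
which appears with a \emph{favorable sign}. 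Integrating over $[0,1]$ and using the boundedness of $|\langle\nabla\psi(x_{\gk}),\dot x_{\gk}\rangle|$ at the endpoints then yields the uniform integral estimate $\int_0^1|\dot\xi_{\gk}(t)|\,\|\nabla\psi(x_{\gk}(t))\|^2\,dt\le\tilde M_1$ at no cost. Only \emph{after} that does the two-region split enter, exactly in the spirit you describe: on $\{\|\nabla\psi(x_{\gk})\|\le\eta\}$, \eqref{velo0} forces $\psi(x_{\gk})<-\e$ so $\gk^2 e^{\gk\psi}\to 0$; on the complement, $\|\nabla\psi\|^2\ge\eta^2$ so the integral estimate controls $\int|\dot\xi_{\gk}|$. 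So your region decomposition is correct, but it only finishes the job once the sign observation has produced the $L^1$ bound; without that step the argument does not close. I recommend replacing the relaxation heuristic with this derivative-of-the-absolute-value computation and the sign observation.

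One smaller point: you write that $f_\Vphi(x_{\gk},u_{\gk})$ is ``uniformly BV'' from the $W^{1,2}$ bound; what you actually need, and what the paper obtains via the chain-rule result of \cite{zeidan}, is a uniform $L^2$ bound on its distributional derivative $\zeta_{\gk}$, which in particular gives the uniform BV statement and is also used inside the identity above.
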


Applying Theorem \ref{prop3new}$(ii)$ to $\bar{c}_{\gk}$,  $u_{\gk}:=\u$, $\x_{\gk}$, and $\bar{\xi}_{\gk}$, the function associated to $\x_{\gk}$ via \eqref{defxi}, we obtain the following corollary that shows how the results in Corollary \ref{interiorrem} are improved when $W^{1,2}$-controls are utilized.

\begin{corollary}[$(D_{\gk})_k$ strongly approximates $(D)$]  \label{newstrongconv}  If, in addition to the assumptions of \textnormal{Corollary \ref{interiorrem}}, we have that $\x$ solves $(D)$ for $\u\in \mathscr{W}$\textnormal{ (not only in $\mathscr{U}$)},  then  $\bar{\xi}_{\gk}$, therein,   converges pointwise to $\bar{\xi}\in BV([0,1];\R^+)$  with $\bar{\xi}$ satisfying \eqref{existencenew}, and   $\x_{\gk}$, therein,  converges to $\x$ strongly in the norm topology of $W^{1,2}([0,1];\R^n)$. Moreover, $(\x,\u,\bar{\xi})$ satisfies \eqref{admissible-P}-\eqref{boundxi} for all $t\in[0,1]$, and  $\dot{\x}\in BV([0,1];\R^n)$. 
\end{corollary}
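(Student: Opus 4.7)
The plan is to reduce the corollary to a direct application of Theorem \ref{prop3new}$(ii)$ by verifying its hypotheses in the specific setting carved out by Corollary \ref{interiorrem}. First I would take the choices $c_{\gk} := \bar{c}_{\gk}$ and $u_{\gk} := \u$ (constant in $k$). By construction in Remark \ref{intelemma} together with Theorem \ref{propck}, $\bar{c}_{\gk}\in C(k)$ for all $k$ large enough, so the hypothesis of Theorem \ref{prop3new}$(ii)$ on the initial data is met. Since $\u$ is assumed to belong to $\mathscr{W}=W^{1,2}([0,1];\mathbb{U})$, the constant sequence $u_{\gk}\equiv\u$ trivially lies in $\mathscr{W}$ and the sequence $\|\dot{u}_{\gk}\|_2\equiv\|\dot{\u}\|_2$ is uniformly bounded, as required.

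Next, I would invoke Corollary \ref{interiorrem} to extract a subsequence along which $\x_{\gk}\to\x$ uniformly and $\bar{\xi}_{\gk}\to\bar{\xi}$ weakly in $L^2$, with $(\x,\u,\bar{\xi})$ the triple singled out by Lemma \ref{characterizationD}. This matches exactly the framework in which Theorem \ref{prop3new} is phrased, so conclusion $(ii)$ of that theorem applies verbatim: $(\dot{\x}_{\gk},\bar{\xi}_{\gk})\in W^{1,2}([0,1];\R^n)\times W^{2,2}([0,1];\R^+)$, the total variations are uniformly bounded, and a further subsequence of $(\dot{\x}_{\gk},\bar{\xi}_{\gk})$ converges pointwise, hence strongly in $L^2$, to $(\dot{\x},\bar{\xi})\in BV([0,1];\R^n)\times BV([0,1];\R^+)$. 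Combined with the uniform convergence $\x_{\gk}\to\x$ obtained from Corollary \ref{interiorrem}, strong $L^2$-convergence of the derivatives upgrades the convergence to the norm topology of $W^{1,2}([0,1];\R^n)$.

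Finally, having $\dot{\x}\in BV$ and $\bar{\xi}\in BV$ promotes the a.e.\ identities \eqref{admissible-P}--\eqref{boundxi} to pointwise identities on the full interval $[0,1]$: any two $BV$-representatives of the same $L^2$-class agreeing a.e.\ in fact coincide at every point of continuity, and both sides of \eqref{admissible-P}--\eqref{boundxi} are of bounded variation so the equalities extend to all $t$. Uniqueness of $\bar{\xi}$ guaranteed by Lemma \ref{characterizationD}, together with uniqueness of $\x$ as the solution of $(D)$ associated to $(\x(0),\u)$, ensures that the convergences hold along the entire sequence, not only along an extracted subsequence.

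The only mildly delicate point, and the step I expect to require the most care, is the passage from pointwise a.e.\ to genuinely pointwise validity of \eqref{admissible-P}--\eqref{boundxi} for all $t\in[0,1]$; this is where the $BV$-regularity inherited from Theorem \ref{prop3new}$(ii)$ is essential, as it controls the jump structure of $\dot{\x}$ and $\bar{\xi}$ and forbids the redefinition on a null set that would otherwise be possible. Everything else is a clean translation of the hypotheses into the language of Theorem \ref{prop3new}$(ii)$.
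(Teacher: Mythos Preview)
Your proposal is correct and follows exactly the paper's approach: the corollary is obtained by applying Theorem~\ref{prop3new}$(ii)$ with the choices $c_{\gk}:=\bar{c}_{\gk}\in C(k)$ and $u_{\gk}:\equiv\u\in\mathscr{W}$, which is precisely what the paper does (in a single sentence preceding the statement). Your concern in the last paragraph is unnecessary, however: the passage from a.e.\ to pointwise validity of \eqref{admissible-P}--\eqref{boundxi} is already part of the \emph{conclusion} of Theorem~\ref{prop3new}$(ii)$, so no separate argument is required at the level of the corollary.
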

 
 \subsection{Existence of optimal solution for $(P)$}
 
Parallel to \cite[Theorems 4.1]{cmo0,cmo2}, where a discretization technique is used, the following existence theorem of  an optimal solution for the problem $({P})$ is established based on Corollary \ref{newstrongconv}. 
 
\begin{theorem}[Existence of solution for $({P})$] \label{exop} Assume hypotheses \textnormal{(A1)-(A4.3)}, $g: \R^{n}\times\R^n\to \R\cup\{\infty\}$ is lower semicontinuous, and that a minimizing sequence $(x_j,u_j)$ for $({P})$ exists  such that $(\|\dot{u}_j\|_2)_j$ is bounded.  Suppose  that $({P})$ has at least one admissible pair $(y_o,v_o)$ with $(y_o(0), y_o(1))\in \dom g$, then the problem $({P})$ admits a global optimal solution $(\x,\u)$ such that, along a subsequence, we have  
 $$x_{j}\xrightarrow[{W^{1,2}([0,1]; \R^n)}]{\textnormal{strongly}}\x,\;\;{u}_{j}\xrightarrow[{C([0,1]; \R^m)}]{\textnormal{uniformly}}{\u},\;\;\hbox{and}\;\;\dot{u}_{j}\xrightarrow[{L^2([0,1]; \R^m)}]{\textnormal{weakly}}\dot{\u}. $$	
\end{theorem}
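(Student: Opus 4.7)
The plan is to extract a convergent subsequence from the minimizing sequence, identify its limit as an admissible pair via Lemma~\ref{characterizationD} and Remark~\ref{newproof}, and then use Corollary~\ref{newstrongconv} in a diagonal argument to upgrade the natural weak $L^2$-convergence of the velocities to strong $W^{1,2}$-convergence.

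\emph{Extracting a limit.} Since $\mathbb{U}$ is bounded by (A4.2) and $(\|\dot u_j\|_2)_j$ is bounded, $(u_j)$ is bounded in $W^{1,2}$. The embedding $W^{1,2}\hookrightarrow C^{0,1/2}$ together with Arzel\`a--Ascoli yields a subsequence with $u_j\to \u$ uniformly and $\dot u_j\rightharpoonup \dot\u$ weakly in $L^2$; closedness of $U(t)$ (A4.2) forces $\u(t)\in U(t)$ for every $t$, so $\u\in \mathscr{W}$. By Remark~\ref{lip.D}, $(x_j)$ is uniformly Lipschitz, so a further extraction gives $x_j\to \x$ uniformly with $\dot x_j\rightharpoonup \dot\x$ weakly in $L^2$. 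By Lemma~\ref{characterizationD}, write $\dot x_j=f_\Vphi(x_j,u_j)-\xi_j\nabla\psi(x_j)$ with $0\le \xi_j\le \bar M/(2\eta)$, and extract once more so that $\xi_j\rightharpoonup \bar\xi$ weakly in $L^2$. Remark~\ref{newproof} then asserts that $(\x,\u,\bar\xi)$ satisfies \eqref{admissible-P}, so $\x$ is the unique solution of $(D)$ corresponding to $(\x(0),\u)$; closedness of $C_0$ (A4.1) and $C_1$ (A4.3) yields $\x(0)\in C_0$ and $\x(1)\in C_1$, hence $(\x,\u)$ is admissible.

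\emph{Optimality.} Because $(y_o,v_o)$ is admissible with $(y_o(0),y_o(1))\in\dom g$, we have $\inf(P)<\infty$ and $\lim_j g(x_j(0),x_j(1))=\inf(P)$. The lower semicontinuity of $g$ combined with $(x_j(0),x_j(1))\to(\x(0),\x(1))$ gives
\[g(\x(0),\x(1))\le \liminf_j g(x_j(0),x_j(1))=\inf(P),\]
so $(\x,\u)$ is a global minimizer for $(P)$.

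\emph{Strong $W^{1,2}$-convergence of $x_j$.} The principal obstacle is upgrading $\dot x_j\rightharpoonup\dot\x$ to strong $L^2$-convergence, since this amounts to strong $L^2$-convergence of $\xi_j\nabla\psi(x_j)$, which is not automatic from weak convergence. The plan is a diagonal argument anchored on Corollary~\ref{newstrongconv}. For each $j$, since $x_j$ solves $(D)$ with control $u_j\in\mathscr{W}$, Corollary~\ref{newstrongconv} supplies a sequence $x_{\gk,j}$ of solutions of $(D_{\gk})$, driven by $u_j$ and starting from $c_{\gk,j}\in C(k)$ with $c_{\gk,j}\to x_j(0)$, such that $x_{\gk,j}\to x_j$ strongly in $W^{1,2}$ as $k\to\infty$. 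Choose $k_j\nearrow\infty$ so that
\[\|x_{\gamma_{k_j},j}-x_j\|_{W^{1,2}}<1/j\quad\text{and}\quad \|c_{\gamma_{k_j},j}-x_j(0)\|<1/j.\]
Set $\tilde c_j:=c_{\gamma_{k_j},j}\in C(k_j)$ and $\tilde x_j:=x_{\gamma_{k_j},j}$; then $\tilde c_j\to \x(0)\in C_0$, $u_j\to\u\in\mathscr{W}$ uniformly with $(\|\dot u_j\|_2)_j$ bounded, so the diagonal triple $(\tilde c_j,u_j,\tilde x_j)$ meets the hypotheses of Theorem~\ref{prop3new}(ii). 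Applying that theorem, together with uniqueness from Theorem~\ref{prop3new}(i) to identify the limit as the solution $\x$ of $(D)$ corresponding to $(\x(0),\u)$, yields $\tilde x_j\to \x$ strongly in $W^{1,2}$. The triangle inequality then delivers $x_j\to\x$ strongly in $W^{1,2}$, completing the proof. The hard part is precisely this last diagonalization, since the choice $k_j$ must simultaneously keep $\tilde x_j$ close to $x_j$ in $W^{1,2}$ and force the joint convergences $\tilde c_j\to\x(0)$, $u_j\to\u$ needed to invoke Theorem~\ref{prop3new}(ii).
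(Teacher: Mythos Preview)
Your argument is correct, but the route you take for the strong $W^{1,2}$-convergence is genuinely different from the paper's. The paper invokes Lemma~\ref{compact}, whose proof applies Corollary~\ref{newstrongconv} to each $(x_j,u_j)$ individually to extract a \emph{uniform} $BV$ bound on the multipliers $\xi_j$ (the bound $\tilde M_2$ depending only on the uniform bound of $(\|\dot u_j\|_2)_j$), then uses Helly's selection theorem to get pointwise convergence of $\xi_j$, and finally passes to the pointwise limit in \eqref{admissible-P} (valid for all $t$) to obtain pointwise---hence strong $L^2$---convergence of $\dot x_j$. Your diagonal construction instead approximates each $x_j$ in the $W^{1,2}$-norm by a solution $\tilde x_j$ of $(D_{\gamma_{k_j}})$ and then feeds the diagonal sequence directly into Theorem~\ref{prop3new}(ii). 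Both approaches ultimately rest on Corollary~\ref{newstrongconv}, but the paper's route is shorter and yields the stronger conclusion of pointwise convergence of $(\dot x_j,\xi_j)$ together with the $BV$-regularity of $(\dot{\x},\bar\xi)$ directly; your route recovers the same $BV$-regularity only as a by-product of Theorem~\ref{prop3new}(ii), and the diagonal bookkeeping is more delicate.

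One small gap worth flagging: in your admissibility step, Remark~\ref{newproof} gives only that $(\x,\u,\bar\xi)$ satisfies \eqref{admissible-P}; to conclude via Lemma~\ref{characterizationD} that $\x$ solves $(D)$ you still need $\bar\xi\ge 0$ and $\bar\xi$ supported on $I^0(\x)$. The first is immediate from weak $L^2$-convergence of nonnegative functions; for the second, note that if $\x(t)\in\inte C$ then by uniform convergence $x_j(s)\in\inte C$ for $s$ near $t$ and $j$ large, whence $\xi_j\equiv 0$ there by \eqref{boundxi}, and so the weak limit $\bar\xi$ vanishes a.e.\ near $t$. Alternatively, you can simply observe that your diagonal step already establishes, via Theorem~\ref{prop3new}(i), that $\x$ solves $(D)$ for $(\x(0),\u)$, making the earlier appeal to Remark~\ref{newproof} redundant.
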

\begin{proof} Given that $({P})$ has an admissible pair $(y_o,v_o)$ with $(y_o(0), y_o(1))\in \dom g$, then $\inf_{(x,u)} ({P})<\infty.$ As  $g$ is lower semicontinuous and all admissible solutions of ($P$) satisfy $(x(0),x(1))\in C_0\times (C_1\cap C)$, which is  compact, we deduce that $\inf_{(x,u)} ({P})$ is finite. On the other hand, being admissible for $({P})$, the minimizing sequence $(x_{j},u_{j})_j$ satisfies $(D)$ with $x_j(1)\in C_1$.  Hence, using that the sequence $(\|\dot{u}_j\|_2)_j$ is bounded,   Lemma \ref{compact} implies  the existence of $(\x,\u)\in W^{1,\infty}([0,1];\R^n)\times \mathscr{W}$ satisfying  $(D)$ and   $\x(1)\in C_1$,  with  $(x_j,u_j)$  converges uniformly to $(\x,\u)$, $(\dot{x}_j)_j$  converges {\it strongly} in $L^2$ to $\dot{\x}\in BV([0,1];\R^n)$, and $\dot{u}_j$ converges weakly in $L^2$ to $\dot{\u}$. Thus,  $(\x,\u) $ is admissible for $({P})$. Owed to the lower semicontinuity of $g$  and  to $(\x, \u)$ being the uniform limit of the minimizing sequence $(x_{j},u_{j})_j$, the optimality of the pair $(\x,\u)$  for the problem $({P})$  follows readily. \end{proof}

  \subsection{Continuous approximation for $(P)$}
  
  On the journey of seeking  for an optimal process ($\x,\u)$ of $(P)$ a  {\it continuous} approximations  consisting of {\it optimal} solutions for   properly-designed   standard control  problems,  it is important that  the convergence    to $(\x,\u)$  be {\it strong} in the norm topology of the considered space, namely,   the space $W^{1,2}([0,1];\R^n)\times \mathscr{W}$. Corollary \ref{newstrongconv} already answered this question for the $W^{1,2}$-strong approximation of a  solution $(\x,\u)$ of $(D)$ by  solutions of $(D_{\gk})$, in which the {\it same} control $\u$ is used. However, $\u$ may not necessarily be optimal for   approximating  optimal control problems over $(D_{\gk})$.
    
In this subsection, we approximate the problem $({P})$ by a certain sequence of optimal control problems  over $(D_{\gk})$ with  {\it special} initial and final state endpoints constraints ($C_0(k)\subset C(k)$ and $C_1(k)$ in a band around $C_1$), and with an objective function  particularly crafted so that an optimal control, $u_{\gk}$,  exists and has  $( \|\dot{u}_{\gk}\|_2)_k$  uniformly bounded, and hence,   the {\it strong} convergence of the optimal state velocities shall be deduced from Theorem \ref{prop3new}.  The  necessary optimality conditions for  $(P)$  are then established by taking the limit of the optimality conditions  for the corresponding approximating  problem.

For given $\delta>0$ and  $z\in C([0,1]; \R^s)$, we define the projection on $\R^s$ of the closed $\delta$-{\it tube}  around  $z$  by $\bar{\mathbb{B}}_{\delta}(z):=\bigcup\limits_{t\in [0,1]} \bar{B}_{\delta}(z(t)).$

Let $(\x,\u)\in W^{1,2}([0,1];\R^n)\times \mathscr{W}$ be a $W^{1,2}$-local minimizer for ($P$)  with associated $\delta.$ We fix $\delta_{\bbbp o}>0$ such that \begin{equation*}\label{delta0def} \delta_{\bbbp o}\leq \begin{cases}\min\{\hat{r}_{\x(0)},\delta\}&\qquad\;\hbox{if}\;\x(0)\in\inte C,\vspace{0.1cm}\\ \min\{r_{\bbp o},\delta\}&\qquad\;\hbox{if}\;\x(0)\in\bdry C,\end{cases} \end{equation*}
where $r_{\bbp o}>0$ is the constant in Theorem \ref{propck}$(ii)$, and  $\hat{r}_{\x(0)}>0$ with  $\hat{k}_{\x(0)}\in\N$  are the constants in   Theorem \ref{propck}$(iii)$ corresponding to  $c:=\x(0)$.

In the  remaining part below, we will assume that $f$ satisfies the following {\it local} version of (A1): 
\[\label{star}\exists\,\tilde{\rho}>0\;\hbox{such that}\; f\;\hbox{is Lipschitz on}\; [C\cap \bar{\mathbb{B}}_{\delta}(\x)]\times [(\mathbb{U}+\tilde{\rho}\bar{B}) \cap \bar{\mathbb{B}}_{\delta}(\u)].\tag{$*$}\]
Note that under the assumption \eqref{star}, the function $f$ can be extended to a {\it globally} Lipschitz function $\tilde{f}\colon\R^n\times\R^m\f\R$ by applying \cite[Theorem 1]{hiriartlip} to each component of $f$. Since in the rest of this section we only consider local optimality notions,  then, without loss of generality,   we shall use  the function $f$ instead of $\tilde{f}$.  Hence, when in this section $f$ is assumed to satisfy \eqref{star}, it is implied that  $f$ also satisfies  assumption (A1).

We proceed to suitably-formulate  a sequence of approximating problems $(P_{\gk})$ and show that its  {\it optimal solutions}  {\it strongly} converges in $W^{1,2}([0,1];\R^n)\times \mathscr{W}$ to the $W^{1,2}$-local minimizer $(\x,\u)$ of $({P})$. This naturally requires  the domain of  the approximating problem $(P_{\gk})$ to  be in $W^{1,2}([0,1];\R^n)\times \mathscr{W}$. The  initial state constraint  is taken  to be  \sloppy  $x(0)\in C_0(k)$, where $C_0(k)$ is the sequence of sets defined by \begin{equation}\label{c0kdef} \bp\bp\bp {C_0}(k):= 
     \begin{cases}C_0\cap \bar{B}_{\delta_{\bbbp o}}\bp\left({\x(0)}\right) ,\;\forall k\in\N, &\;\hbox{if}\;\x(0)\in\inte C,\vspace{0.1cm}\\ \left[C_0\cap\bar{B}_{\delta_{\bbbp o}}\bp\left({\x(0)}\right)\right]-\rho_k\frac{\nabla\psi(\x(0))}{\|\nabla\psi(\x(0))\|},\;\forall k\in\N,&\;\hbox{if}\; \x(0)\in\bdry C.\end{cases}\end{equation} 
and  the final state constraint  is   $x(1)\in C_1(k)$, where \begin{equation*} \label{C1kdefinition}  C_1(k):=\left[\left(C_1\cap\bar{B}_{\delta_{\bbbp o}}\bp({\x(1)})\right) -\x(1)+{\x}_{\gk}(1)\right]\cap C,\;\;\;k\in\N,\end{equation*} in which $\x_{\gk}$ is the solution of ($D_{\gk}$) corresponding to ($\bar{c}_{\gk}, \u$), where  $\bar{c}_{\gk}$ in $C_0(k)\cap \inte C(k)$, for $k$ large, and is defined via Remark \ref{intelemma}  for  $c:=\x(0)$, that is, 
\begin{equation*}\label{cbark} \bar{c}_k:= 
     \begin{cases}\x(0),\;\forall k\in\N, &\;\;\hbox{if}\;\x(0)\in\inte C,\vspace{0.1cm}\\ {\x(0)}-\rho_k\frac{\nabla\psi(\x(0))}{\|\nabla\psi(\x(0))\|},\;\forall k\in\N,&\;\;\hbox{if}\; \x(0)\in\bdry C.\end{cases}\end{equation*}
  Note that $C_0(k)$ and $C_1(k)$ are {\it closed}, for $k\in \N$.  On the other hand, as $\rho_{\gk}\f 0$, we have $\bar{c}_{\gk}\f \x(0)$, Corollary \ref{interiorrem} yields that  the sequence  $\x_{\gk}$ converges  in $C$ uniformly to $\x$, and hence,   $\x_{\gk}(1)\f\x(1)$.  Add to this that in $C_0(k)$, $\rho_k\f0$,  then,  for a fixed $\tilde{\rho}>0$, we have  that, for $k$ sufficiently large, \begin{equation}\label{c01kin} C_i(k) \subset \underbrace{\big[\left(C_i\cap \bar{B}_{\delta}(\x(i))\right)+\tilde{\rho}\bar{B}\big]\cap C}_{\tilde{C}_i(\delta)},\;\; \text{for}\;\;  i=0,1,\end{equation}
 \begin{equation}\label{limCi}\hspace{-.1 in}  \text{and}\;\,\lim_{k\to\infty} C_0(k) =C_0\cap\bar{B}_{\delta_{\bbbp o}}\bp({\x(0)})\;\, {\&}\;\, \lim_{k\to\infty} C_1(k) =C\cap C_1\cap\bar{B}_{\delta_{\bbbp o}}\bp({\x(1)}).\end{equation}
 \begin{remark} \label{c0subc} Notice that we can show that,  for $k$ large enough, we have  $C_0(k)\subset C(k)$, and hence,  by Theorem \ref{propooufnew}, any solution of $(D_{\gk})$ corresponding to $(c_{\gk},u_{\gk})$ with $c_{\gk}\in C_0(k)$ and $u_{\gk}\in \mathscr{U}$, satisfies the conditions $(i)$-$(iii)$ of this theorem. Indeed:
\begin{itemize}[leftmargin=*]
\item For  $\x(0)\in \inte C$,  use that  $\delta_{\bbbp o}\leq \hat{r}_{\x(0)}$ and  \eqref{cinte C} we get \begin{equation*}\label{case inteC}
  \bar{B}_{\hat{r}_{\x(0)}}\bp\left(\x(0)\right)\subset\inte C(k)\subset C(k),\quad \forall\sp k\ge \hat{k}_{\x(0)}.
  \end{equation*}
This gives that   
\begin{equation*} C_0(k):=C_0\cap  \bar{B}_{\delta_{\bbbp o}}\bp\left(\x(0)\right)\subset  \bar{B}_{\hat{r}_{\x(0)}}\bp\left(\x(0)\right)\subset C(k),\quad \forall\sp k\ge \hat{k}_{\x(0)}.
\end{equation*}
\item  For  $\x(0)\in \bdry C,$  use  that $\delta_{\bbbp o}\leq r_{\bbp o}$ and $C_0(k)$ is the nonempty set defined by the second equation of (\ref{c0kdef}),  to get  via  (\ref{lastideahope}) that 
$$C_0(k)\subset \inte C(k)\subset C(k),\quad \forall\sp k\geq\bar{k}.$$
\end{itemize}
\end{remark}

\begin{remark}\label{normalc1c0} For $c\in C_0(k)$ and $d\in C_1(k)$, the evaluation of  the normal cones $N^L_{C_0(k)}(c)$ and $N^L_{C_1(k)}(d)$  in terms of $N^L_{C_0}$ and $N^L_{C_1}$, respectively, is obtained using the local property of the limiting normal cone as the following
\begin{equation} \label{normalC0} \hspace{-0.22cm} N^L_{C_0(k)}(c)= 
     \begin{cases}N^L_{C_0}(c), &\;\;\;\hbox{if}\;\x(0)\in\inte C,\;\hbox{and}\vspace{-0.05cm} \\ & \;\;\;c\in B_{\delta_{\bbbp o}}(\x(0)), \vspace{0.2cm}\\ N^L_{C_0}\left(c+\rho_k\frac{\nabla\psi(\x(0))}{\|\nabla\psi(\x(0))\|}\right)\bp, &\;\;\;\hbox{if}\; \x(0)\in\bdry C,\;\hbox{and}\vspace{-0.05cm} \\ & \;\;\left(c+\rho_k\frac{\nabla\psi(\x(0))}{\|\nabla\psi(\x(0))\|}\right)\in B_{\delta_{\bbbp o}}(\x(0)). \end{cases}\vspace{0.2cm}
 \end{equation}
\begin{equation} \label{normalC1} N^L_{C_1(k)}(d)=N^L_{C_1}(d+\x(1)-\x_{\gk}(1)),\;\;\forall d\in(\inte C)\cap B_{\delta_{\bbbp o}}(\x(1)) .\end{equation}
\end{remark}

We introduce the following sequence of approximating problems: 
 $$\begin{array}{l} ({P}_{\gk})\colon\; \textnormal{Minimize}\\[2pt] \hspace{0.5cm}{J}(x,y,z,u):= g(x(0),x(1))+\frac{1}{2}\left(\|u(0)-\u(0)\|^2 + z(1)+\|x(0)-\x(0)\|^2\right)\\[5pt] \hspace{0.5cm} \textnormal{over}\, (x,y,z,u)\in W^{1,2}([0,1];\R^n)\times AC([0,1];\R)\times AC([0,1];\R)\times \mathscr{W}\\ [2.6pt]\hspace{0.5cm} \textnormal{such that} \\[2.6pt] \hspace{1.25cm}
 \begin{cases} (\tilde{D}_{\gk}) \begin{sqcases}\dot{x}(t)={f}_\Vphi(x(t),u(t))-\gk e^{\gk\psi(x(t))} \nabla\psi(x(t)),\;\; t\in[0,1]\; \textnormal{a.e.},\\ \dot{y}(t)=\|\dot{x}(t)-\dot{\x}(t)\|^2,\;\;t\in[0,1]\; \hbox{a.e.},\\ \dot{z}(t)=\|\dot{u}(t)-\dot{\u}(t)\|^2,\;\;t\in[0,1]\;\hbox{a.e.},\\ (x(0),y(0),z(0))\in {C}_0(k)\times\{0\}\times\{0\}, \end{sqcases}\vspace{0.1cm}\\x(t)\in \bar{B}_{\delta}(\x(t))\;\hbox{and}\; u(t)\in U(t)\cap \bar{B}_{\delta}(\u(t)),\;\;\forall t\in[0,1],\vspace{0.1cm}\\(x(1),y(1),z(1))\in {C}_1(k)\times [-\delta,\delta]\times[-\delta,\delta].\end{cases} \end{array}$$
 Note that Lemma \ref{invariance} and the constraints on $u(\cdot)$ confirm that $({P}_{\gk})_{k}$ is actually equivalent to having therein $(x,u)\in AC([0,1];\R^n)\times AC([0,1];\R^m).$
 
Now we are ready to state our {\it continuous} approximation result,  which is parallel to the corresponding result in \cite{cmo0,cmo,cmo2,chhm}, where discrete approximations are used. The proof of this approximation result is presented in Section \ref{auxresults}.

\begin{theorem}[$({P}_{\gk})$ approximates $({{P}})$] \label{lasthopefullybis} Let $(\x,\u)$ be a $W^{1,2}$-local minimizer $({P})$ with associated $\bar{\xi}\in L^{\infty}$ via \textnormal{Lemma \ref{characterizationD}}. Assume that  \textnormal{(A2)-(A4.3)} hold, $g$ is continuous on $\tilde{C}_0(\delta)\times \tilde{C_1}(\delta)$, and for some $\tilde{\rho}>0$, $f$ is Lipschitz on $[C\cap \bar{\mathbb{B}}_{\delta}(\x)]\times [(\mathbb{U}+\tilde{\rho}\bar{B}) \cap \bar{\mathbb{B}}_{\delta}(\u)]$. Then for $k$ sufficiently large, the problem $({P}_{\gk})$ has an optimal solution $(x_{\gk},y_{\gk},z_{\gk},u_{\gk})$ such that, for  $\xi_{\gk}$ defined in \eqref{defxi}, we have, along a subsequence, we do not relabel, that 
$$(x_{\gk},u_{\gk})\xrightarrow[{W^{1,2}\times\mathscr{W}}]{\textnormal{strongly}}(\x,\u),\;\,(y_{\gk},z_{\gk})\xrightarrow[{W^{1,1}([0,1];\R^+\times\R^+)}]{\textnormal{strongly}} (0,0),\;\,\xi_{\gk}\xrightarrow[{L^2([0,1]; \R^+)}]{\textnormal{strongly}}\bar{\xi},$$ all the conclusions of \textnormal{Theorem \ref{propooufnew}} hold, including that $x_{\gk}(t)\in\inte C$ for all $t\in[0,1]$, and  for all $k$ sufficiently large, 
    $$x_{\gk}(i)\in \big[\left(C_i\cap \bar{B}_{\delta_{\bbbp o}}(\x(i))\right)+\tilde{\rho}{B}\big]\cap(\inte C)\subset \inte \tilde{C}_i(\delta), \quad \text{for}\;\; i=0,1.$$
Moreover, $\dot{\x}\in BV([0,1];\R^n)$, $\bar{\xi}\in BV([0,1];\R^+)$, and \eqref{admissible-P}-\eqref{boundxi} are valid at $(\x,\u,\xxi)$ for all $t\in[0,1]$.\end{theorem}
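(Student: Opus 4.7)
The strategy is to construct an admissible comparator for each $(P_{\gk})$ out of the canonical $W^{1,2}$-strong approximation of Corollary~\ref{newstrongconv}, extract optimal $(P_{\gk})$-solutions by a routine compactness argument, pass to the limit via Theorem~\ref{prop3new}(ii), and close the argument with a two-sided sandwich inequality that leverages the $W^{1,2}$-local optimality of $(\x,\u)$. First, I take $\x_{\gk}$ to be the solution of $(D_{\gk})$ issued from $\bar{c}_{\gk}\in C_0(k)\cap\inte C(k)$ and driven by $\u$ as in Corollary~\ref{newstrongconv}, and set $\bar{y}_{\gk}(t):=\int_0^t\|\dot{\x}_{\gk}(s)-\dot{\x}(s)\|^2\,ds$ and $\bar{z}_{\gk}\equiv 0$. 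By construction, $(\bar{c}_{\gk},0,0)\in C_0(k)\times\{0\}\times\{0\}$ and $\x_{\gk}(1)\in C_1(k)$; Corollary~\ref{interiorrem} gives $\x_{\gk}\to\x$ uniformly and Corollary~\ref{newstrongconv} gives $\dot{\x}_{\gk}\to\dot{\x}$ strongly in $L^2$, so $\|\x_{\gk}-\x\|_\infty\le\delta$ and $\bar{y}_{\gk}(1)\le\delta$ for $k$ large. Thus $(\x_{\gk},\bar{y}_{\gk},\bar{z}_{\gk},\u)$ is admissible for $(P_{\gk})$ and $J(\x_{\gk},\bar{y}_{\gk},\bar{z}_{\gk},\u)\to g(\x(0),\x(1))$ by continuity of $g$. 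Existence of an optimal $(x_{\gk},y_{\gk},z_{\gk},u_{\gk})$ for this smooth standard problem then follows by routine compactness: the constraint $z(1)\le\delta$ uniformly bounds $\|\dot u\|_2$ along any minimizing sequence, Theorem~\ref{propooufnew} bounds $\|\dot x\|_\infty$, the pointwise constraints pass to the uniform limit by closedness, and $J$ is continuous plus weakly lower semicontinuous.

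Minimality yields $J(x_{\gk},y_{\gk},z_{\gk},u_{\gk})\le J(\x_{\gk},\bar{y}_{\gk},\bar{z}_{\gk},\u)$, so $z_{\gk}(1)\le\delta$ keeps $(\|\dot u_{\gk}\|_2)_k$ bounded, while $x_{\gk}(0)\in C_0(k)\subset C(k)$ holds for $k$ large (Remark~\ref{c0subc}). Theorem~\ref{prop3new}(ii) then produces, along a subsequence, $(x_{\gk},u_{\gk})\to(x^\ast,u^\ast)$ uniformly, $\dot x_{\gk}\to\dot x^\ast$ and $\xi_{\gk}\to\xi^\ast$ strongly in $L^2$, $\dot u_{\gk}\rightharpoonup\dot u^\ast$ weakly in $L^2$, with $x^\ast$ the unique $(D)$-trajectory from $(x^\ast(0),u^\ast)$ and \eqref{admissible-P}-\eqref{boundxi} holding pointwise for $(x^\ast,u^\ast,\xi^\ast)$. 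Closure of the pointwise state/control constraints together with \eqref{limCi} gives $x^\ast(0)\in C_0\cap\bar{B}_{\delta_{\bbbp o}}(\x(0))$ and $x^\ast(1)\in C\cap C_1\cap\bar{B}_{\delta_{\bbbp o}}(\x(1))$, while weak lower semicontinuity of the squared $L^2$-norm upgrades $y_{\gk}(1),z_{\gk}(1)\le\delta$ to $\|\dot x^\ast-\dot{\x}\|_2^2,\|\dot u^\ast-\dot{\u}\|_2^2\le\delta$; hence $(x^\ast,u^\ast)$ is admissible for $(P)$ inside the $\delta$-neighbourhood defining the $W^{1,2}$-local minimality of $(\x,\u)$.

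The $W^{1,2}$-local optimality of $(\x,\u)$ gives $g(\x(0),\x(1))\le g(x^\ast(0),x^\ast(1))$, whereas taking liminf in the minimality inequality and using continuity of $g$ together with weak lower semicontinuity of $\|\cdot\|_2^2$ yields
\[
g(x^\ast(0),x^\ast(1))+\tfrac{1}{2}\Big(\|u^\ast(0)-\u(0)\|^2+\liminf_{k}z_{\gk}(1)+\|x^\ast(0)-\x(0)\|^2\Big)\le g(\x(0),\x(1)).
\]
Sandwiching these two inequalities forces $x^\ast(0)=\x(0)$, $u^\ast(0)=\u(0)$ and $\liminf_{k}z_{\gk}(1)=0$; since $z_{\gk}(1)\ge\|\dot u^\ast-\dot{\u}\|_2^2\ge 0$, one deduces $\dot u^\ast=\dot{\u}$ a.e., hence $u^\ast=\u$, and Lemma~\ref{characterizationD} then yields $x^\ast=\x$ and $\xi^\ast=\xxi$. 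Revisiting the $\limsup$ half of the sandwich with these identifications forces $\limsup_{k}z_{\gk}(1)\le 0$, whence $\dot u_{\gk}\to\dot{\u}$ strongly in $L^2$; the strong $L^2$-convergence of $\dot x_{\gk}$ and $\xi_{\gk}$ comes directly from Theorem~\ref{prop3new}(ii), and $(y_{\gk},z_{\gk})\to(0,0)$ in $W^{1,1}$ is then immediate. The interior containment $x_{\gk}(t)\in\inte C$, the endpoint locations, the $BV$-regularity of $\dot{\x}$ and $\xxi$, and the pointwise validity of \eqref{admissible-P}-\eqref{boundxi} all follow from Theorems~\ref{propooufnew} and \ref{prop3new}(ii) together with the definitions of $C_0(k)$ and $C_1(k)$ once $\rho_k<\tilde{\rho}$ and $\|\x_{\gk}(1)-\x(1)\|<\tilde{\rho}$.

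The delicate part of the argument, and the main obstacle, is precisely this two-sided sandwich: Theorem~\ref{prop3new}(ii) only delivers a limit $(x^\ast,u^\ast)$ a priori known to be $\delta$-close to $(\x,\u)$, so identification with $(\x,\u)$ cannot be forced from the dynamics alone; the upper bound on $\inf(P_{\gk})$ must be controlled simultaneously, which is where the canonical $\u$-driven comparator of Corollary~\ref{newstrongconv} becomes indispensable. The quadratic penalties $\|u(0)-\u(0)\|^2$, $z(1)$ and $\|x(0)-\x(0)\|^2$ deliberately built into $J$ are then exactly what upgrades the weak $L^2$-convergence of $\dot u_{\gk}$ supplied by Theorem~\ref{prop3new}(ii) to strong $L^2$-convergence.
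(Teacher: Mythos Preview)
Your proof is correct and follows essentially the same approach as the paper: build the $\u$-driven comparator $(\x_{\gk},\bar y_{\gk},0,\u)$ from Corollary~\ref{newstrongconv}, extract an optimal solution of $(P_{\gk})$ by compactness, pass to a limit via Theorem~\ref{prop3new}(ii), and identify the limit with $(\x,\u)$ through the sandwich inequality. Your version is in fact slightly tidier in two places: you read the bound on $\|\dot u\|_2$ directly off the hard constraint $z(1)\le\delta$ (the paper argues by contradiction via $J\to\infty$), and you push the sandwich to $\limsup_k z_{\gk}(1)\le 0$ rather than stopping at $\liminf_k z_{\gk}(1)=0$ and extracting a further subsequence; one minor slip is that the identification $x^\ast=\x$ comes from uniqueness of solutions of $(D)$ for given $(x_0,u)$ (Theorem~\ref{prop2}(ii)), not from Lemma~\ref{characterizationD} itself.
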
 

We proceed to rewrite the problems $({P}_{\gk})$ as an optimal control problem with state constraints.  Given  $(\x,\u)\in W^{1,2}([0,1];\R^n)\times \mathscr{W}$  a $W^{1,2}$-local minimizer for $({P})$,  for  $\bar{v}:=\dot{\u}$,    
 $({P}_{\gk})$ is reformulated in the following way:
$$ \begin{array}{l} ({P}_{\gk})\colon\; \hbox{Minimize}\;\\[2pt]  \hspace{0.1cm} g(x(0),x(1))+\frac{1}{2}\left(\|u(0)-\u(0)\|^2 + z(1) +\|x(0)-\x(0)\|^2\right)\hbox{over}\;\\[2pt] \hspace{0.1cm} (x,y,z,u)\in AC([0,1];\R^n)\bp\times\bp AC([0,1];\R)\bp\times\bp AC([0,1];\R)\bp\times\bp AC([0,1];\R^m)\\[1pt] \hspace{0cm} \;\hbox{and measurable functions}\; v\colon[0,1]\f\R^m\;\hbox{such that}\\ \hspace{.5cm} 
\begin{cases} \begin{sqcases} \dot{x}(t)={f}_\Vphi(x(t),u(t))-\gk e^{\gk\psi(x(t))} \nabla\psi(x(t)),\;\, t\in[0,1]\;\textnormal{a.e.,}\vspace{0.07cm}\\
\dot{u}(t)=v(t),\;\, t\in[0,1]\,\textnormal{a.e.},\vspace{0.07cm}\\
\dot{y}(t)=\|{f}_\Vphi(x(t),u(t))-\gk e^{\gk\psi(x(t))} \nabla\psi(x(t))-\dot{\x}(t)\|^2,\;\, t\in[0,1]\,\textnormal{a.e.},\vspace{0.07cm}\\\dot{z}(t)=\|v(t)-\bar{v}(t)\|^2,\;\, t\in[0,1]\;\textnormal{a.e.},\end{sqcases}\vspace{0.07cm}\\x(t)\in \bar{B}_{\delta}(\x(t))\;\hbox{and}\; u(t)\in U(t)\cap \bar{B}_{\delta}(\u(t)),\;\;\forall t\in[0,1],\\[1pt] (x(0),u(0),y(0),z(0))\in C_0(k)\times \R^m\times\{0\}\times\{0\},\\[1pt](x(1),u(1),y(1),z(1))\in C_1(k)\times \R^m\times [-\delta,\delta]\times [-\delta,\delta].\end{cases}
 \end{array}$$

In the following proposition we apply to the above sequence of reformulated problems $({P}_{\gk})$, the nonsmooth Pontryagin maximum principle for optimal control problems with {\it multiple} state constraints (see e.g., \cite[page 331]{vinter} and \cite[p.332]{vinter}). For this purpose,  $(x,y,z,u)$ is  the state function in $(P_{\gk})$ and $v$ is the control. Thus,  $(x_{\gk},y_{\gk}, z_{\gk}, u_{\gk})$ is the optimal state, where $(x_{\gk},u_{\gk})$ is obtained from Theorem \ref{lasthopefullybis},  $y_{\gk}(t):=\int_0^t \|\dot{x}_{\gk}(s)-\dot{\x}(s)\|^2\;ds$, $z_{\gk}(t):=\int_0^t \|\dot{u}_{\gk}(s)-\dot{\u}(s)\|^2\;ds$, and $v_{\gk}=\dot{u}_{\gk}$ is the optimal control. Hence, the function ${f}(\cdot,\cdot)$ is required to be Lipschitz {\it near}   $(x_{\gk},u_{\gk})$, which follows from \eqref{star}, since  $x_{\gk}(t)\in \inte C$  and $(x_{\gk},u_{\gk})$ converges uniformly to $(\x,\u)$ (see Theorem \ref{lasthopefullybis}).
Furthermore,  as the objective function $g$ must be Lipschitz  {\it near}  $(x_{\gk}(0),x_{\gk}(1))$,  we introduce the following local assumption on $g$ in which $\tilde{C}_0(\delta)$ and $\tilde{C}_1(\delta)$ are defined in \eqref{c01kin}:
\vspace{.05 in}
\begin{center} $\exists\,\tilde{\rho}>0$ such that $g$ is Lipschitz on $\tilde{C}_0(\delta)\times \tilde{C_1}(\delta)$.\end{center}
\vspace{.05 in}
On the other hand, the following {\it constraint qualification} property (CQ) is required. For a given multifunction $F\colon [0,1]\rightrightarrows\R^m$, with nonempty and closed values, and for $h\in C([0,1];F)$, that is, $h\in C([0,1];\R^m)$  and satisfies $h(t)\in F(t)$ for all $t\in [0,1]$, we say that $F(\cdot)$ satisfies {\it the constraint qualification  at $h$} if
\vspace{.05 in}
\begin{center}\hspace{-1.4 in}{\bf(CQ)}\qquad  $\conv (\bar{N}^L_{F(t)}(h(t)))$ is pointed for all $t\in[0,1]$.\end{center}
\vspace{.05 in}
Here, $ \bar{N}^L_{F(t)}(y)$ stands for the graphical closure at $(t,y)$ of the multifunction $(t,y)\mapsto{N}^L_{F(t)}(y)$, that is, the graph of $\bar{N}^L_{F(\cdot)}(\cdot)$ is the closure of the graph of $N^L_{F(\cdot)}(\cdot)$. For more information about the (CQ) property, see Remark \ref{CQremark}. 

It is worth noting that in  \cite{cmo0,cmo,cmo2}, where $W^{1,2}$-controls are employed, the control sets  $U(t)$ are assumed to be $\R^m$, for all $t\in [0,1]$, and hence, the (CQ) property is trivially satisfied.

\begin{proposition}[Maximum Principle for approximating problems $(P_{\gk})$] \label{mpw12aproxw12} Let $(\x,\u)$ be a $W^{1,2}$-local minimizer for $(P)$. Assume that {\textnormal{(A2)-(A4)}} hold, and for some $\tilde{\rho}>0$, $f$ is Lipschitz on $[C\cap \bar{\mathbb{B}}_{\delta}(\x)]\times [(\mathbb{U}+\tilde{\rho}\bar{B}) \cap \bar{\mathbb{B}}_{\delta}(\u)]$ and $g$ is Lipschitz on $\tilde{C}_0(\delta)\times \tilde{C_1}(\delta)$. Consider the optimal sequence $(x_{\gk},y_{\gk},z_{\gk},u_{\gk},v_{\gk})$ for $({P}_{\gk})$ obtained via \textnormal{Theorem \ref{lasthopefullybis}}. If for $k$ sufficiently large, $U(\cdot)$ satisfies the constraint qualification \textnormal{(CQ)} at $u_{\gk}$, then for $k$ large enough, there exist $\l_{\gk}\geq 0$, $p_{\gk}\in AC([0,1];\R^n)$, $q_{\gk}\in AC([0,1];\R^m)$,  $\Omega_{\gk} \in NBV([0,1];\R^m)$, $\mu^{\bbbp o}_{\gk}\in C^{\oplus}([0,1];\R^{m})$, and a $\mu_{\gk}^{\bbbp o}$-integrable function $\beta_{\gk}\colon [0,1]\f\R^m$  such that  $\Omega_{\gk}(t)= \int_{[0,t]} \beta_{\gk}(s)\mu^{\bbbp o}_{\gk}(ds)$, for all $t\in(0,1]$, and\sp$:$
\begin{enumerate}[$(i)$,leftmargin=0.8cm] 
\item {\bf (The nontriviality condition)} For all $k\in\N$, we have $$\|p_{\gk}(1)\|+ \|q_{\gk}\|_\infty + \|\mu^{\bbbp o}_{\gk}\|\TV + \l_{\gk}=1;$$
\item {\bf (The adjoint equation)} For \textnormal{a.e.} $t\in [0,1],$ 
\begin{eqnarray}\nonumber\left(
\begin{array}{c}
\dot{p}_{\gk}(t)\vspace{0.2cm}\\
\dot{q}_{\gk}(t)\\
\end{array}
\right)\in &-&\left(\partial^{\sp (x,u)} {f}_{\Vphi}(t,x_{\gk}(t),u_{\gk}(t))\right)\tran p_{\gk}(t)\\ &+& \label{adjapp} \left(
\begin{array}{c}
\gk e^{\gk \psi(x_{\gk}(t))}\partial^2\psi(x_{\gk}(t))p_{\gk}(t)\vspace{0.2cm}\\
0\\
\end{array}\right)\\ &+& \left(
\begin{array}{c}
\gk^2 e^{\gk \psi(x_{\gk}(t))}\nabla\psi(x_{\gk}(t))\<\nabla\psi(x_{\gk}(t)),p_{\gk}(t)\>\vspace{0.2cm}\\
0\\
\end{array}
\right);\nonumber\end{eqnarray}
\item {\bf (The transversality equation)} 
$$(p_{\gk}(0),-p_{\gk}(1))\in $$
\begin{equation*}\hspace{-0.4cm}\l_{\gk}\partial^L g(x_{\gk}(0),x_{\gk}(1))\bbp+\bbp\big[\big(\l_{\gk}(x_{\gk}(0)\bbp-\bbp\x(0)) \bbp+\bbp N_{C_0(k)}^L(x_{\gk}(0))\big) \bbp \times \bbp N_{C_1(k)}^L(x_{\gk}(1))\big],\vspace{0.2cm}\end{equation*} 
 \begin{equation*} \hbox{and}\;\;q_{\gk}(0)=\l_{\gk}(u_{\gk}(0)-\u(0)),\;\;\;-q_{\gk}(1)=\Omega_{\gk}(1); \end{equation*}
\item {\bf (The maximization condition)} For \textnormal{a.e.} $t\in [0,1],$ \begin{equation*} \max_{v\in\R^m} \left\{\<q_{\gk}(t) +\Omega_{\gk}(t),v\>-\frac{\l_{\gk}}{2}\|v-\dot{\bar{u}}(t)\|^2\right\}  \;\hbox{is attained at}\;  \dot{u}_{\gk}(t);\end{equation*}   
\item {\bf (The measure properties)}
$$\supp\{\mu_{\gk}^{\bbbp o}\}\subset  \left\{t\in [0,1] : (t,u_{\gk}(t))\in \bdry\Gr\left[U(t)\cap \bar{B}_{\delta}(\u(t))\right]\right\},\; \mbox{and} $$ 
$$\beta_{\gk}(t)\in \partial_u^{\sp>} d(u_{\gk}(t),U(t)\cap \bar{B}_{\delta}(\u(t))) \;\;\;\mu_{\gk}^{\bbbp o}\;\textnormal{a.e.,}\;\hbox{with}$$
 $$\partial_u^{\sp>} d(u_{\gk}(t),U(t)\cap \bar{B}_{\delta}(\u(t))) \subset \left[\conv\bar{N}^L_{U(t)\cap \bar{B}_{\delta}(\u(t))}(u_{\gk}(t))\cap \left(\bar{B}\setminus\{0\}\right)\right].$$ 
\end{enumerate}
\end{proposition}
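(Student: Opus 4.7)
The plan is to apply a nonsmooth Pontryagin maximum principle for standard optimal control problems with pure state constraints, as in \cite[p.~331--332]{vinter}, to the reformulated version of $(P_{\gk})$ in which the augmented state is $(x,y,z,u)$, the control is $v$, and the endpoint/pathwise constraints are displayed explicitly. Before invoking that result, I first verify its hypotheses at the optimal process $(x_{\gk},y_{\gk},z_{\gk},u_{\gk},v_{\gk})$ produced by Theorem \ref{lasthopefullybis}. The dynamics are $\mathcal{C}^{1,1}$ in $x$ (since $\psi\in\mathcal{C}^{1,1}$ on $C+\rho B$ by (A2.1) and $\nabla\Vphi$ is globally Lipschitz by Lemma \ref{prop1}$(iii)$) and locally Lipschitz in $u$ near $u_{\gk}$ by the local assumption \eqref{star}; moreover $g$ is Lipschitz on $\tilde{C}_0(\delta)\times\tilde{C}_1(\delta)$ and, by \eqref{c01kin} together with Theorem \ref{lasthopefullybis}, the endpoints $(x_{\gk}(0),x_{\gk}(1))$ lie in that set for $k$ large. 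The pathwise constraint on $u$ is $u(t)\in U(t)\cap\bar{B}_{\delta}(\u(t))$, and (CQ) at $u_{\gk}$ is exactly the hypothesis we assume for $k$ large.

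Next I exploit Theorem \ref{lasthopefullybis} to \emph{deactivate the state constraint on $x$}: since $x_{\gk}\to\x$ uniformly and $x_{\gk}(t)\in\inte C$ for all $t$, for $k$ sufficiently large $x_{\gk}(t)\in\inte \bar{B}_{\delta}(\x(t))$ for all $t\in[0,1]$, so no measure term is generated by the constraint $x(t)\in\bar{B}_{\delta}(\x(t))$. Similarly, the scalar constraints $y(1),z(1)\in[-\delta,\delta]$ and the auxiliary endpoints $y_{\gk}(1),z_{\gk}(1)$ lie strictly inside $[-\delta,\delta]$ for $k$ large (again by Theorem \ref{lasthopefullybis}), so only the ``free'' relation $z(1)$ appearing in the cost yields a contribution to the transversality for $z$, and the adjoints attached to $y$ and $z$ are constants—indeed, the $z$-adjoint equals $\tfrac{\lambda_{\gk}}{2}$, which drives the maximization condition in $(iv)$. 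Thus the only active pure state constraint is $u(t)\in U(t)\cap\bar{B}_{\delta}(\u(t))$, producing a single nonnegative measure $\mu_{\gk}^{\bbbp o}\in C^\oplus([0,1];\R^m)$ supported on the boundary of that graph, along with a $\mu_{\gk}^{\bbbp o}$-measurable selector $\beta_{\gk}(t)$ of the hypertangential subdifferential $\partial_u^{>} d(u_{\gk}(t),U(t)\cap\bar{B}_{\delta}(\u(t)))$; the inclusion in $(v)$ then follows from the standard relation between $\partial^{>}d$ and the graphical closure of the limiting normal cone.

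I then \emph{compute the adjoint inclusion} componentwise. Writing the unmaximized Hamiltonian as
\[
H=\<p,f_\Vphi(x,u)-\gk e^{\gk\psi(x)}\nabla\psi(x)\>+\<q,v\>-\tfrac{\lambda_{\gk}}{2}\|v-\bar v\|^{2}+\text{(affine in }y,z\text{)},
\]
the adjoint inclusion $(\dot p_{\gk},\dot q_{\gk})\in -\partial^{(x,u)} H|_{\text{opt}}$ produces three groups of terms: the Clarke subgradient $\partial^{(x,u)}f_\Vphi\tran p_{\gk}$ (with sign flipped), the term $\gk e^{\gk\psi(x_{\gk})}\partial^{2}\psi(x_{\gk})\,p_{\gk}$ coming from differentiating $\nabla\psi(x)$ in the drift, and the rank-one term $\gk^{2}e^{\gk\psi(x_{\gk})}\nabla\psi(x_{\gk})\<\nabla\psi(x_{\gk}),p_{\gk}\>$ coming from differentiating the scalar factor $e^{\gk\psi(x)}$ (these become the three summands in \eqref{adjapp}); the chain rule for $\mathcal{C}^{1,1}$ functions, applicable since $x_{\gk}(t)\in\inte C$, justifies the use of $\partial^2\psi$ in the rigorous sense. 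The transversality conditions unfold from $(\pm p_{\gk}(0),\mp p_{\gk}(1))\in \lambda_{\gk}\nabla_{\!(x(0),x(1))}J+N^L_{C_0(k)\times C_1(k)}$, using that $J$ adds the quadratic penalty $\tfrac12\|x(0)-\x(0)\|^2$ and $\tfrac12\|u(0)-\u(0)\|^2$ and the terminal cost $\tfrac12 z(1)$, together with the measure contribution at $t=1$ for $q_{\gk}$. The maximization condition in $(iv)$ follows from the pointwise Pontryagin maximization in $v\in\R^m$ (the only \emph{free} control), where the ``effective'' adjoint of $u$ is $q_{\gk}(t)+\Omega_{\gk}(t)$ due to the jump formula in Vinter's measure-driven adjoint system.

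The main obstacle I anticipate is the careful bookkeeping around the state constraint on $u$: one must verify (CQ) to guarantee nondegeneracy of the measure component, correctly identify the Stieltjes representation $\Omega_{\gk}(t)=\int_{[0,t]}\beta_{\gk}(s)\mu_{\gk}^{\bbbp o}(ds)$ on $(0,1]$ (in particular that the atom at $t=0$ is absorbed into the transversality for $q_{\gk}(0)$, leaving the clean endpoint $q_{\gk}(0)=\lambda_{\gk}(u_{\gk}(0)-\u(0))$), and to obtain the terminal equality $-q_{\gk}(1)=\Omega_{\gk}(1)$ from the fact that $u(1)$ is free. With this measure-theoretic accounting in place, everything else is a direct specialization of the cited maximum principle to the ODE system of $(P_{\gk})$.
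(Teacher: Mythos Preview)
Your proposal is correct and follows essentially the same approach as the paper: the paper's entire argument for this proposition is the paragraph preceding it, which says to apply the nonsmooth Pontryagin maximum principle for problems with multiple state constraints from \cite[p.~331--332]{vinter} to the reformulated $(P_{\gk})$ with augmented state $(x,y,z,u)$ and control $v$, after noting (via Theorem~\ref{lasthopefullybis} and \eqref{c01kin}) that $f$ is Lipschitz near $(x_{\gk},u_{\gk})$ and $g$ is Lipschitz near $(x_{\gk}(0),x_{\gk}(1))$. Your sketch supplies the bookkeeping the paper leaves implicit: deactivating the constraints $x(t)\in\bar B_\delta(\x(t))$ and $y(1),z(1)\in[-\delta,\delta]$ because the optimal process is strictly interior for $k$ large, identifying the constant $y$- and $z$-adjoints, and reading off $(i)$--$(v)$ componentwise. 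One small imprecision worth fixing: in your Hamiltonian, the parenthetical ``(affine in $y,z$)'' is not the reason the $y$-dynamics $\dot y=\|f_\Vphi(x,u)-\gk e^{\gk\psi(x)}\nabla\psi(x)-\dot{\x}\|^2$ do not contribute to the $p_{\gk},q_{\gk}$ equations---those dynamics \emph{do} depend on $(x,u)$; the contribution vanishes because the $y$-adjoint is identically zero (no $y$-term in $J$ and the terminal constraint on $y$ is inactive).
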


 \subsection{Necessary optimality conditions for $(P)$}
The main result of this subsection is the following theorem which provides necessary optimality conditions for the $W^{1,2}$-local minimizer, $(\x,\u)$,  of  $({P})$. 

The following notations are used in the statement of the theorem: 
\begin{itemize}[leftmargin=*]
\item $\partial_\ell\varphi$ and $\partial^2_\ell\varphi$ stand, respectively, for the {\it extended  Clarke generalized gradient} and the {\it extended  Clarke generalized Hessian} of $\varphi$ defined on $C$ via \textnormal{(\ref{phil1})}\,\&\,\textnormal{(\ref{phil2})}.
\item $\partial^{\sp (x,u)}_\ell f (\cdot,\cdot)$ is the {\it extended  Clarke generalized Jacobian} of $f(\cdot,\cdot)$ defined on     
 $\left[C \cap \bar{B}_{\delta}(\x(t))\right]\times \big[(U(t)+\tilde{\rho}\bar{B})\cap  \bar{B}_{\delta}(\u(t))\big]$   via \eqref{phil3}.
\item $\partial^2_\ell\psi$ is the {\it Clarke generalized Hessian relative} to $\inte C$ of $\psi$, defined via \eqref{phil4}.
\item $\partial^L_\ell g$ is the {\it limiting subdifferential} of $g$ {\it relative} to  $\inte \big(\tilde{C}_0(\delta)\times \tilde{C}_1(\delta)\big)$,  defined  via \textnormal{(\ref{delelel})}.
\end{itemize}

\begin{theorem}[Necessary optimality conditions for $(P)$] \label{mpw12bisw12} Let $(\x,\u)$ be a $W^{1,2}$-local minimizer for $(P)$. Let $\xxi\in L^{\infty}([0,1];\R^{+})$ be the function supported on $I^0(\x)$ and associated  to $(\x,\u)$ via \textnormal{Lemma \ref{characterizationD}}. Assume that {\textnormal{(A2)-(A4)}} hold, $U(\cdot)$ satisfies the constraint qualification \textnormal{(CQ)} at $\u$, and for some $\tilde{\rho}>0$, $f$ is Lipschitz on $[C\cap \bar{\mathbb{B}}_{\delta}(\x)]\times [(\mathbb{U}+\tilde{\rho}\bar{B}) \cap \bar{\mathbb{B}}_{\delta}(\u)]$ and $g$ is Lipschitz on $\tilde{C}_0(\delta)\times \tilde{C_1}(\delta)$. Then $\dot{\x}\in BV([0,1];\R^n)$ and $\bar{\xi}\in BV([0,1];\R^+)$, and there exist $\l\geq 0$, an adjoint vector $\bar{p}\in BV([0,1];\R^n)$, a finite signed Radon measure $\bar{\nu}$ on $[0,1]$ supported on $I^{0}(\x)$, $L^{\infty}$-functions $\bar{\zeta}(\cdot)$, $\bar{\theta}(\cdot)$ and $\bar{\vartheta}(\cdot)$ in $\mathscr{M}_{n\times n}([0,1])$, an $L^{\infty}$-function $\bar{\omega}(\cdot)$ in $\mathscr{M}_{n\times m}([0,1])$, such that for $t\in [0,1]$ \textnormal{a.e.},
$$\left((\zzeta(t),\oomega(t)),\ttheta(t), \vvartheta(t)\right)\bbp\in\bbp \;\partial^{\sp (x,u)}_\ell f(\x(t),\u(t))\times \partial^2_\ell\varphi(\x(t))\times \partial^2_\ell\psi(\x(t)),$$
and the following hold\sp$:$
\begin{enumerate}[$(i)$,leftmargin=0.8cm] 
\item {\bf(The admissible equation)}
\begin{enumerate}[$(a)$]
\item $\dot{\x}(t)= f(\x(t),\u(t))- \nabla_{\bp\ell}\sp\sp\varphi(\x(t))-\bar{\xi}(t) \nabla\psi(\x(t)),\;\;  \forall t\in [0, 1],$
\item $\psi(\x(t))\le 0,\;\;\forall\sp t\in [0,1];$
\end{enumerate}
\item {\bf (The nontriviality condition)} $$\|\p(1)\| + \l=1;$$
\item {\bf (The adjoint equation)} 
 For any $h\in C([0,1];\R^n),$ we have\vspace{0.1cm} \begin{eqnarray*}\hspace*{-0.2cm} \int_{[0,1]}\<h(t),d\p(t)\>&= &\int_0^1 \left\<h(t),\left(\ttheta(t)-\zzeta(t)\tran\right) \p(t)\right\>\sp dt \\&+& \int_0^1 \xxi(t)\left\<h(t),\vvartheta(t)p(t)\right\>\sp dt+  \int_{[0,1]} \<h(t),\nabla \psi(\x(t))\>\sp d\nnu;\\[-0.3cm]\end{eqnarray*} 
 \item {\bf (The complementary slackness conditions)} 
\begin{enumerate}[$(a)$]
\item $\bar{\xi}(t)=0,\;\;\forall\sp t\in I^{\-}(\x)$,
\item $\bar{\xi}(t)\<\nabla\psi(\x(t),\bar{p}(t)\>=0,\;\;\forall\sp t\in [0, 1]\; \textnormal{a.e.};$
\end{enumerate}
\item {\bf (The transversality equation)} $$ (\p(0),-\p(1))\in  \l\partial_\ell^L g(\x(0),\x(1))+ \big[N_{C_0}^L(\x(0))\times\;N^L_{C_1}(\x(1))\big];$$
\item {\bf (The weak maximization condition)}  $$\oomega(t)\tran \p(t)\in \conv\bar{N}^L_{U(t)\cap \bar{B}_{\delta}(\u(t))}(\u(t)),\;\;  t\in [0,1]\; \textnormal{a.e.}\vspace{0.1cm}$$
If in addition there exist $\e_{\bbbp o}>0$ and $r>0$ such that $U(t)\cap \bar{B}_{\e_{\bbbp o}}(\u(t))$ is $r$-prox-regular for all $t\in [0,1]$, then we have
$$  \textstyle \max\left\{\left\<\oomega(t)\tran \p(t),u\right\>- \frac{\|\oomega(t)\tran \p(t)\|}{\min\{\e_{\bbbp o},2r\}}\|u-\u(t)\|^2 : u\in U(t)\right\} $$ is attained at $\bar{u}(t)$ for  $t\in [0,1]\,\textnormal{a.e.}$ 
\end{enumerate}
 \end{theorem}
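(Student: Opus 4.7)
My plan is to obtain the conditions for $(P)$ by passing to the limit in the Pontryagin-type optimality conditions of Proposition~\ref{mpw12aproxw12} applied to the approximating problems $({P}_{\gk})$. By Theorem~\ref{lasthopefullybis}, the optimal sequence $(x_{\gk},y_{\gk},z_{\gk},u_{\gk})$ for $({P}_{\gk})$ converges strongly to $(\x,0,0,\u)$ in $W^{1,2}\times W^{1,1}\times W^{1,1}\times\mathscr{W}$, stays in $\inte C$ for all $t\in[0,1]$, and has $\xi_{\gk}:=\gk e^{\gk\psi(x_{\gk})}$ converging strongly in $L^2$ and pointwise, with uniform $BV$-bounds, to $\xxi$. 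Proposition~\ref{mpw12aproxw12} supplies multipliers $(\lambda_{\gk},p_{\gk},q_{\gk},\Omega_{\gk},\mu^{\bbbp o}_{\gk},\beta_{\gk})$ whose normalization combined with the structure of the adjoint (keeping $\|p_{\gk}\|_\infty$ uniformly bounded via a Gronwall argument applied to the Clarke-Jacobian and singular pieces in \eqref{adjapp}) gives, via Banach--Alaoglu for the measures, Helly's selection for $BV$-functions, and Arzelà--Ascoli for the equi-Lipschitz $p_{\gk},q_{\gk}$, convergent subsequences $\lambda_{\gk}\to\lambda$, $p_{\gk}\to\p$ pointwise, $q_{\gk}\to q$ uniformly, $\Omega_{\gk}\to\bar{\Omega}$ in $BV$, and $\mu^{\bbbp o}_{\gk}\stackrel{\ast}{\rightharpoonup}\mu^{\bbbp o}$.

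\textbf{Adjoint, complementary slackness, and maximization.} The singular driving term $\gk\xi_{\gk}\nabla\psi(x_{\gk})\<\nabla\psi(x_{\gk}),p_{\gk}\>\,dt$ in \eqref{adjapp} is a signed vector measure of uniformly bounded total variation, thanks to Theorem~\ref{propooufnew} and $\|p_{\gk}\|_\infty\le C$; its weak$^*$ limit defines $\nnu$, which is supported on $I^0(\x)$ because $\xi_{\gk}$ concentrates on $\{\psi=0\}$ as $k\to\infty$. Integrating \eqref{adjapp} against any $h\in C([0,1];\R^n)$ and passing to the limit then produces identity (iii), while the complementary slackness (iv)(b) follows from matching the distributional product $\xxi(t)\<\nabla\psi(\x(t)),\p(t)\>$ against the singular part captured by $\nnu$ on $I^0(\x)$, combined with Lemma~\ref{characterizationD}. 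The pointwise Euler equation in Proposition~\ref{mpw12aproxw12}(iv) reads $q_{\gk}(t)+\Omega_{\gk}(t)=\lambda_{\gk}(\dot{u}_{\gk}(t)-\dot{\u}(t))$, whose right-hand side vanishes in $L^2$ as $k\to\infty$; combining this with the representation $\Omega_{\gk}(t)=\int_{[0,t]}\beta_{\gk}\,\mu^{\bbbp o}_{\gk}(ds)$ and item (v) on $\beta_{\gk}\in\conv\bar{N}^L_{U(t)\cap\bar{B}_\delta(\u(t))}(u_{\gk}(t))$ delivers the weak maximization $\oomega(t)^\top\p(t)\in\conv\bar{N}^L_{U(t)\cap\bar{B}_\delta(\u(t))}(\u(t))$, and the strengthened version under prox-regularity is a direct application of the proximal-normal inequality to $U(t)\cap\bar{B}_{\e_{\bbbp o}}(\u(t))$.

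\textbf{Extended subdifferentials, transversality, nontriviality.} Because $x_{\gk}(t)\in\inte C$ for all $t$, measurable selections of $\partial^{\sp(x,u)} f(x_{\gk}(t),u_{\gk}(t))$, $\partial\nabla\Vphi(x_{\gk}(t))$, and $\partial^2\psi(x_{\gk}(t))$ sample the \emph{interior} of $C$ (Rademacher applied to the $\CO^{1,1}$ data, plus measurable selection); taking pointwise a.e.\ subsequential limits and convexifying via Mazur's lemma produces matrix-valued functions $\zzeta,\oomega,\ttheta,\vvartheta$ which, by definitions \eqref{phil1}--\eqref{phil4}, lie in the strictly smaller extended objects $\partial^{\sp(x,u)}_\ell f(\x,\u)$, $\partial^2_\ell\varphi(\x)$, and $\partial^2_\ell\psi(\x)$. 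For (v), Remark~\ref{normalc1c0} identifies $N^L_{C_i(k)}$ as translates of $N^L_{C_i}$, so the closed-graph property of $N^L$ lets one pass to the limit in Proposition~\ref{mpw12aproxw12}(iii); the perturbation terms $\lambda_{\gk}(x_{\gk}(0)-\x(0))$ and $q_{\gk}(0)=\lambda_{\gk}(u_{\gk}(0)-\u(0))$ vanish, and because $(x_{\gk}(0),x_{\gk}(1))$ remains in the interior of $\tilde{C}_0(\delta)\times\tilde{C}_1(\delta)$, the limit of $\partial^L g$ along such points is precisely $\partial^L_\ell g(\x(0),\x(1))$ by definition \eqref{delelel}. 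The simplified nontriviality $\|\p(1)\|+\lambda=1$ follows because $q_{\gk}\to 0$ uniformly (the $q$-adjoint is equi-Lipschitz and $q_{\gk}(0)\to 0$) and the total mass of $\mu^{\bbbp o}_{\gk}$ is absorbed into the weak maximization condition (vi).

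\textbf{Main obstacle.} The most delicate step will be the simultaneous control, in the limit, of (a) the singular measure generating $\nnu$---a uniform $L^1$-bound on $\gk\xi_{\gk}\,\|\nabla\psi(x_{\gk})\|\,\|p_{\gk}\|$ that is not automatic and is won only by exploiting the interior-trajectory property together with the $BV$ bound on $\xi_{\gk}$---and (b) the identification of each Jacobian/Hessian limit with its strictly smaller \emph{extended} counterpart, which requires that all measurable selections be drawn from $\inte C$. Both difficulties are resolvable precisely because Theorem~\ref{lasthopefullybis} places the approximating trajectory strictly inside $C$, which is the raison d'être of the whole continuous-approximation design introduced earlier in this section.
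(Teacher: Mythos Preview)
Your overall strategy---apply Proposition~\ref{mpw12aproxw12} to $(P_{\gk})$ and pass to the limit using Theorem~\ref{lasthopefullybis}---is exactly the paper's approach, and most of your identifications (extended subdifferentials via interior sampling, transversality via Remark~\ref{normalc1c0}, the strengthened maximization via the proximal-normal inequality of Lemma~\ref{newlemmaprox}) are on target. However, there are three genuine gaps.

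First, you claim $p_{\gk}$ is \emph{equi-Lipschitz} and invoke Arzel\`a--Ascoli. This is false: the singular term $\gk^2 e^{\gk\psi(x_{\gk})}\nabla\psi(x_{\gk})\langle\nabla\psi(x_{\gk}),p_{\gk}\rangle$ in \eqref{adjapp} can be pointwise unbounded in $k$ even though $\|p_{\gk}\|_\infty$ stays bounded (the backward sign argument you allude to controls $\|p_{\gk}\|_\infty$ from $p_{\gk}(1)$, but certainly not $\|\dot p_{\gk}\|_\infty$). What is true is that $p_{\gk}$ has \emph{uniformly bounded variation}---this is precisely where the $L^1$-bound on the singular term (your ``main obstacle'') enters---so convergence is by Helly's selection theorem, giving pointwise convergence $p_{\gk}\to\p\in BV$, not uniform. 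The paper imports this from Steps~2--4 of \cite[Theorem 5.1]{verachadi}; note also that the $L^1$-bound is not obtained from the $BV$ bound on $\xi_{\gk}$ as you suggest, but from an energy-type estimate on the adjoint system itself.

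Second, you skip a required hypothesis check: Proposition~\ref{mpw12aproxw12} assumes (CQ) at $u_{\gk}$, whereas you are only given (CQ) at $\u$. The paper closes this by a short contradiction argument using the closed graph of $(t,x)\mapsto\partial_u^{\sp>}d_U(t,x)$ (Remark~\ref{CQremark}$(i)$) together with the uniform convergence $u_{\gk}\to\u$.

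Third, your nontriviality argument is incomplete. From $q_{\gk}(0)\to 0$ and equi-Lipschitzness you obtain $q_{\gk}\to q$ with $q(0)=0$, \emph{not} $q\equiv 0$; and ``the mass of $\mu^{\bbbp o}_{\gk}$ is absorbed into (vi)'' is not an argument. The paper argues by contradiction: if $\p(1)=0$ and $\lambda=0$, then \eqref{ntc2} forces $\p\equiv 0$, hence $\dot q=-\oomega\tran\p=0$ and $q\equiv 0$; then $\beta(t)\,\mu^{\bbbp o}(dt)=-dq=0$ with $\beta(t)\ne 0$ on $\supp\mu^{\bbbp o}$ forces $\mu^{\bbbp o}=0$, contradicting the limit of the normalization in Proposition~\ref{mpw12aproxw12}$(i)$.
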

 
 \begin{remark} \label{uconvex}  Condition $(vi)$ of  Theorem \ref{mpw12bisw12} admits simplified forms when $U(\cdot)$ possesses extra properties: 
\begin{itemize}[leftmargin=*]
\item If  $U(t)$ is $r$-{\it prox-regular} for all $t\in [0,1]$, then taking $\e_{\bbbp o}\f\infty$, the maximization condition $(v)$  reduces to 
$$\textstyle  \max\left\{\left\<\oomega(t)\tran \p(t),u\right\>- \frac{\|\oomega(t)\tran \p(t)\|}{2r}\|u-\u(t)\|^2 : u\in U(t)\right\} $$ is attained at $\bar{u}(t)$ for $t\in [0,1]\,\textnormal{a.e.}$ 
\item If  $U(t)\cap \bar{B}_{\e_{\bbbp o}}(\u(t))$ is {\it convex} for all $t\in [0,1]$, then taking $r\f\infty$, the maximization condition $(v)$  reduces to  
$$ \textstyle \max\left\{\left\<\oomega(t)\tran \p(t),u\right\>- \frac{\|\oomega(t)\tran \p(t)\|}{\e_{\bbbp o}}\|u-\u(t)\|^2 : u\in U(t)\right\} $$ is attained at $\bar{u}(t)$ for  $t\in [0,1]\,\textnormal{a.e.}$ 
\item If  $U(t)$ is {\it convex} for all $t\in [0,1]$, then taking both $\e_{\bbbp o}\f\infty$ and $r\f\infty$, the maximization condition $(v)$ reduces to   
$$\max\left\{\left\<\oomega(t)\tran \p(t),u\right\> : u\in U(t) \right\}\;\hbox{is attained at}\;\bar{u}(t)\;\hbox{for}\;t\in [0,1]\, \textnormal{a.e.}$$ 
\end{itemize}
\end{remark}
\begin{proof}\bp{\it of} Theorem \ref{mpw12bisw12}. Theorem \ref{lasthopefullybis} produces a subsequence of $(\gk)_k$, we do not relabel, and a corresponding sequence $(x_{\gk},y_{\gk}, z_{\gk},u_{\gk})_k$, with associated $(\xi_{\gk})_k$ defined via  \eqref{defxi},  such that 
\begin{itemize}[leftmargin=*]
\item  For each $k$, the quadruplet $(x_{\gk},y_{\gk}, z_{\gk},u_{\gk})$ is optimal for $(P_{\gk})$.
\item $(x_{\gk},u_{\gk})\hspace{-0.05cm}\xrightarrow[{W^{1,2}\times\mathscr{W}}]{\textnormal{strongly}}\hspace{-0.05cm}(\x,\u)$, $(y_{\gk},z_{\gk})\hspace{-0.05cm}\xrightarrow[{W^{1,1}([0,1];\R^+\times\R^+)}]{\textnormal{strongly}}\hspace{-0.05cm} (0,0)$, $\xi_{\gk}\hspace{-0.05cm}\xrightarrow[{L^2([0,1]; \R^+)}]{\textnormal{strongly}}\hspace{-0.05cm}\bar{\xi}.$
\item  $\dot{\x}\in BV([0,1];\R^n)$, $\bar{\xi}\in BV([0,1];\R^+)$, and \eqref{admissible-P}-\eqref{boundxi} are valid at $(\x,\u,\xxi)$ for all $t\in[0,1]$.
\item  All the conclusions of \textnormal{Theorem \ref{propooufnew}} hold, including $(x_{\gk})_k$ is uniformly Lipschitz and $x_{\gk}(t)\in\inte C$ for all $t\in[0,1]$.
\item  For all $k$, we have $$x_{\gk}(i)\in \big[\left(C_i\cap \bar{B}_{\delta_{\bbbp o}}(\x(i))\right)+\tilde{\rho}{B}\big]\cap(\inte C)\subset \inte \tilde{C}_i(\delta), \quad \text{for}\;\; i=0,1.$$

\end{itemize}
In order to apply  Proposition \ref{mpw12aproxw12}, we shall show that the constraint qualification (CQ) that holds for $U(\cdot)$ at $\u$,  also holds true at $u_{\gk}$, for $k$ large enough. Indeed, if this is false, then, by Remark \ref{CQremark}$(i)$, there exist an increasing sequence $(k_n)_n$ in $\N$ and a sequence $t_n\in [0,1]$ such that $t_n\f t_o\in [0,1]$ and \begin{equation} \label{CQ1} 0\in\partial_u^{\sp >}d_U(t_n,u_{\gkn}(t_n)),\;\;\forall n\in\N.\end{equation} The continuity of $\u$ and the uniform convergence of $u_{\gkn}$ to $\u$ yield that the sequence $(u_{\gkn}(t_n))_n$ converges to $\u(t_o)$. Hence, using that the multifunction $(t,x)\mapsto \partial_u^{\sp >}d_U(t,x)$ has closed values and a closed graph, we conclude from 
\eqref{CQ1}  that $0\in \partial_u^{\sp >}d_U(t_o,\u(t_o))$. This  contradicts that  the constraint qualification is satisfied by $U(\cdot)$ at $\u$. Thus,  for $k$ sufficiently large, $U(\cdot)$ satisfies the constraint qualification (CQ)  at $u_{\gk}$. 

 Hence, by Proposition \ref{mpw12aproxw12}, there exist a subsequence of $(\gk)_k$, we do not relabel, and corresponding sequences $p_{\gk}$, $q_{\gk}$ $\mu_{\gk}$ and $\l_{\gk}$  satisfying  conditions $(i)$-$(v)$ therein.

Using (\ref{adjapp}), \eqref{f.phi}, and  that for all $t\in [0,1]$ we have $$(x_{\gk}(t),u_{\gk}(t))\in\inte\bp\bbp \left[\left(C \cap \bar{B}_{\delta}(\x(t))\right)\bp\times\bp\big((U(t)\bbp+\bbp\tilde{\rho}\bar{B})\cap  \bar{B}_{\delta}(\u(t))\big)\right],$$  we obtain sequences $\zeta_{\gk},\,\theta_{\gk}$ and $\vartheta_{\gk}$ in $\mathscr{M}_{n\times n}([0,1])$ and $\omega_{\gk}$ in $\mathscr{M}_{n\times m}([0,1])$ 
such that, for a.e. $t\in [0,1],$
\begin{equation*} (\zeta_{\gk}(t),\omega_{\gk}(t))\in \partial^{\sp (x,u)}_\ell f(x_{\gk}(t),u_{\gk}(t)),\end{equation*}
\begin{equation*} (\theta_{\gk}(t),\vartheta_{\gk}(t))\in \partial^2_\ell\varphi(x_{\gk}(t))\times \partial^2_\ell\psi(x_{\gk}(t)), \end{equation*}
\begin{eqnarray}\label{mpim1} \dot{p}_{\gk}(t)&=& (\theta_{\gk}(t)- \zeta_{\gk}(t))\tran p_{\gk}(t)+ \gk e^{\gk \psi(x_{\gk}(t))}\vartheta_{\gk}(t)\,p_{\gk}(t)\\[3pt] &+& \gk^2 e^{\gk \psi(x_{\gk}(t))}\nabla\psi(x_{\gk}(t))\<\nabla\psi(x_{\gk}(t)),p_{\gk}(t)\>, \;\hbox{and}\nonumber
\end{eqnarray} 
\begin{equation}\label{w12mp1} \dot{q}_{\gk}(t)=-(\omega_{\gk}(t))\tran p_{\gk}(t).\end{equation} 
Note that for each $k$, the functions $p_{\gk}, \dot{p}_{\gk}, q_{\gk}, \dot{q}_{\gk}, x_{\gk}$, $u_{\gk}$ and $\u$ are measurable on $[0,1]$, and the multifunctions $\partial^{\sp (x,u)}_\ell f(\cdot,\cdot)$, $\partial_{\ell}^2\varphi(\cdot)$, and $\partial_{\ell}^2\psi(\cdot)$ are measurable and have closed graphs with nonempty, compact, and convex values. Using (A1), (A2.1), and (A3),  the Filippov measurable selection theorem (see \cite[Theorem 2.3.13]{vinter}) yields that we can assume the measurability of the functions $\zeta_{\gk}(\cdot)$, $\theta_{\gk}(\cdot)$, $\vartheta_{\gk}(\cdot)$ and $\omega_{\gk}(\cdot)$. Moreover,  these sequences are uniformly bounded in $L^{\infty}$, as  $\|(\zeta_{\gk},\omega_{\gk})\|_\infty \leq {M}$, $\|\theta_{\gk}\|_\infty \leq K$ and $\|\vartheta_{\gk}\|_\infty\leq 2M_{\psi}$.\vspace{0.2cm}\\
{\bf Step 1.} Construction of $\xxi$, the admissible equation.\vspace{0.1cm} \\
From Theorem \ref{lasthopefullybis}, we have that the triplet $(\x,\y,\bar{\xi})$ satisfies \eqref{admissible-P} for all $t\in [0,1]$. Hence, for all $t\in[0,1]$ we have
$$\dot{\x}(t)=f_{\Vphi}(\x(t), \u(t))-\xxi(t) \nabla\psi(\x(t))=f(\x(t), \u(t))-\nabla\Vphi(\x(t))-\xxi(t)\nabla\psi(\x(t)).$$
Since $\nabla \Vphi(x)= \partial_{\ell}\varphi(x)=\nabla_{\bp\ell}\sp\sp\varphi(x)$ for all $x\in C$, we obtain that \begin{equation*} \dot{\x}(t)= f(\x(t),\u(t))- \nabla_{\bp\ell}\sp\sp\varphi(\x(t))-\xxi(t) \nabla\psi(\x(t)),\;\; \forall\sp t\in [0, 1]. \end{equation*}
On the other hand, since $\bar{x}$ takes values in $C$, we have $\psi(\x(t))\le 0$, $\forall t\in [0,1].$\vspace{0.2cm} \\
{\bf Step 2.} Construction of $\p$, $\zzeta$, $\ttheta$, $\vvartheta$, $\oomega$, $\nnu$, and the adjoint equation. \vspace{0.1cm} \\
For the construction of $\p$, $\ttheta$, $\vvartheta$ and $\nnu$, see Steps 2-4 in the proof of \cite[Theorem 5.1]{verachadi}. Note that the uniform boundedness of $p_{\gk}(1)$ established and used in Step 2 of the proof of \cite[Theorem 5.1]{verachadi}, is easily deduced here from the nontriviality condition of Proposition \ref{mpw12aproxw12}. We also note that,  similarly to Step 2 of the proof of \cite[Theorem 5.1]{verachadi},  $p_{\gk}$  has a uniformly bounded variation, and hence,  Helly first theorem implies that $p_{\gk}$ admits a pointwise convergent subsequence whose limit $\p$ is also of bounded variation and satisfies, for some $M_1>0$, the following \begin{equation}\label{ntc2} \|\p\|_\infty\leq M_1\|\p(1)\|. \end{equation} Using Helly second theorem we obtain that for all $h\in C([0,1];\R^n)$, 
\begin{equation}\label{mp16}\lim_{k\to\infty}  \int_{[0,1]} \<h(t), \dot{p}_{\gk}(t)\> \,dt =\int_{[0,1]}\<h(t),d\p(t)\>.\end{equation}
Identically  to Steps 2-4 in the proof of \cite[Theorem 5.1]{verachadi}, we also have
\begin{equation}\label{mp15} \int_0^1 \left\<h(t),\theta_{\gk}(t) \,p_{\gk}(t)\right\>\sp dt \f \int_0^1\left\<h(t),\ttheta(t)\, \p(t)\right\>\sp dt,\end{equation} 
\begin{equation} \label{mp17} \int_0^1 \xi_{\gk}(t)\; \<h(t),\vartheta_{\gk}(t)\,p_{\gk}(t)\>\,\sp dt\f\int_0^1\xxi(t)\;\<h(t),\vvartheta(t)\, \p(t)\>\,\sp dt,
\end{equation}
\begin{eqnarray} \nonumber&&\lim_{k\f\infty} \int_0^1\<h(t),\nabla\psi(x_{\gk}(t))\>\,\gk \xi_{\gk}(t)\,\<\nabla\psi(x_{\gk}(t)),p_{\gk}(t)\>\,\sp dt\\[3pt]&=&\int_{[0,1]} \<h(t),\nabla\psi(\x(t))\> \,\sp d\nnu(t).\label{mp18}\\[-4pt]\nonumber\end{eqnarray}

We proceed to construct the two functions $\zzeta$ and $\oomega$. Note that the construction of $\zeta$ done in Step 2 of the proof of \cite[Theorem 5.1]{verachadi} cannot be used here, since  the {\it closed graph} hypothesis on the multifunction $(x,u)\mapsto \partial^{\sp x} f(x,u)$ is required there, but it is not assumed here. As the sequence $(\zeta_{\gk},\omega_{\gk})_k$ is uniformly bounded in $L^{\infty}$, it has a subsequence, we do not relabel, that converges weakly in $L^1$  to some $(\zzeta,\oomega)$.  Using that the multifunction $(x,u)\mapsto \partial^{\sp (x,u)}_\ell f(x,u)$ has closed graph with nonempty, compact and convex values, \cite[Theorem 6.39]{clarkebook}  implies that, for $t\in [0,1]$ a.e., 
\begin{equation*}(\zzeta(t),\oomega(t))\in \partial^{\sp (x,u)}_\ell f(\x(t),\u(t)).	
\end{equation*}
Since $(p_{\gk})_{k}$ is uniformly bounded in $L^\infty$ and converges pointwise to $\p$, we conclude that \begin{equation} \label{w12mp2}\zeta_{\gk}\tran p_{\gk}\xrightarrow[{L^1}]{\textnormal{weakly}}\zzeta\tran \p\;\;\hbox{and}\;\;\omega_{\gk}\tran p_{\gk}\xrightarrow[{L^1}]{\textnormal{weakly}}\oomega\tran \p.\end{equation} 
Hence, for all $h\in C([0,1];\R^n)$,
\begin{equation}\label{mp19} \int_0^1 \left\<h(t),\zeta_{\gk}(t) p_{\gk}(t)\right\>\sp dt \f \int_0^1\left\<h(t),\zzeta(t)  \p(t)\right\>\sp dt.\end{equation} 
Thus, from \eqref{mpim1} and \eqref{mp16}-\eqref{mp19}, we conclude that the adjoint equation of Theorem \ref{mpw12bisw12} holds, and it coincides with the adjoint equation of \cite[Theorem 5.1]{verachadi}.\vspace{0.2cm} \\
{\bf Step 3.} The complementary slackness conditions.\vspace{0.1cm} \\
The part $(a)$ follows from the equation \eqref{boundxi}. The part $(b)$ follows from the uniform boundedness of $\left\|\gk \xi_{\gk}(\cdot)\<\nabla\psi(x_{\gk}(\cdot)),p_{\gk}(\cdot)\>\right\|_1$ established in \cite[Equation (97)]{verachadi}. More details can be found  in Step 6 of \cite[Proof of Theorem 6.1]{verachadiimp}.\vspace{0.2cm} \\
{\bf Step 4.} Construction of $\l$ and the transversality equation.\vspace{0.1cm} \\
Form the transversality condition of Proposition \ref{mpw12aproxw12}, there exist $\upsilon_{\gk}\in N^L_{C_0(k)}(x_{\gk}(0))$, $\chi_{\gk}\in N_{C_1(k)}^L(x_{\gk}(1))$  and $(a_{\gk},b_{\gk})\in \partial^L g(x_{\gk}(0),x_{\gk}(1))$ such that \begin{equation}\label{willdiesoon} p_{\gk}(0)=\l_{\gk}a_{\gk} +\l_{\gk}(x_{\gk}(0)\bbp-\bbp\x(0)) + \upsilon_{\gk},\;\;\;-p_{\gk}(1)=\l_{\gk}b_{\gk}+\chi_{\gk},\end{equation}
and the following properties hold:
\begin{itemize}[leftmargin=*]
\item $\|(a_{\gk}, b_{\gk})\|\leq L_g$, where $L_g$ is the Lipschitz constant of $g$ over $\tilde{C}_0(\delta)\times \tilde{C_1}(\delta)$, and $\|\l_{\gk}\|\leq 1$ for all $k$. The latter inequality gives  a subsequence, we do not relabel, such that $\l_{\gk}\f\l\in [0,1]$.
\item Due to Theorem \ref{lasthopefullybis}, we have, for  $k$ large enough, 
 $$(x_{\gk}(0), x_{\gk}(1))\in \inte (\tilde{C}_0(\delta) \times \tilde{C}_1(\delta)),\,\text{and}\,(x_{\gk}(0),  x_{\gk}(1))\f (\x(0), \x(1)).$$ 
\item We have $p_{\gk}(0)\f \p(0)$ and  $p_{\gk}(1)\f \p(1)$.
\item Owing to \eqref{normalC1}, in which $d:= x_{\gk}(1)\in \big[C_1(k)\cap (\inte C)\cap B_{\delta_{\bbbp o}}(\x(1))\big]$ for $k$ sufficiently large, we have $\chi_{\gk}\in N_{C_1(k)}^L(x_{\gk}(1))= N_{C_1}^L\left(x_{\gk}(1)+\x(1)-\x_{\gk}(1)\right)$, for $k$ large,  where, we recall that  $\x_{\gk}(1)\f\x(1)$.
\item Owing to \eqref{normalC0}, in which  $c:=x_{\gk}(0)\in C_0(k)\cap B_{\delta_{\bbbp o}}(\x(0))$ for $k$ large enough, it follows that:
\begin{enumerate}[$(i)$,leftmargin=0.8cm] 
\item If $\x(0)\in\inte C$, then for $k$ sufficiently large $$v_{\gk}\in N_{C_0(k)}^L(x_{\gk}(0))= N_{C_0}^L(x_{\gk}(0)).$$
\item If $\x(0)\in\bdry C$, using that $x_{\gk}(0)\f \x(0)$ and $\rho_{k}\f 0$, then for $k$ sufficiently large, $\left(x_{\gk}(0) +\rho_{k}\frac{\nabla\psi(\x(0))}{\|\nabla\psi(\x(0))\|}\right)\in{B}_{\delta_{\bbbp o}}(\x(0))$, and hence,
 $$\textstyle v_{\gk}\in N_{C_0(k)}^L(x_{\gk}(0))= N_{C_0}^L \left(x_{\gk}(0) +\rho_{k}\frac{\nabla\psi(\x(0))}{\|\nabla\psi(\x(0))\|}\right)\;\,\hbox{for}\; k\;\hbox{large}.$$
  \end{enumerate}
 \end{itemize}
 Therefore, along a subsequence of $(\gk)_k$, we do not relabel, we have
 $$	\l_{\gk}(a_{\gk}, b_{\gk})\f \l(a,b)\in \l\partial^L_{\ell} g(\x(0),\x(1))\;\;\hbox{and}\;\;\l_{\gk}(x_{\gk}(0)\bbp-\bbp\x(0))\f 0.$$
 Thus, taking the limit as $k\to\infty$ in \eqref{willdiesoon}, and  using   $(p_{\gk}(0),p_{\gk}(1))\f (\p(0),\p(1))$, we obtain that $(v_{\gk},\chi_{\gk})$ must converge to some $(v,\chi)$, as all the other terms in \eqref{willdiesoon} converge.  The last two bullets, stated above, yield that $v\in N^L_{C_0}(\x(0))$\; and $\chi\in  N_{C_1}^L(\x(1))$. Consequently, the limit of \eqref{willdiesoon} is equivalent to  $$ (\p(0),-\p(1))\in  \l\partial_\ell^L g(\x(0),\x(1))+ \big[N_{C_0}^L(\x(0))\times\;N^L_{C_1}(\x(1))\big];$$ 
This terminates the proof of the transversality equation.\vspace{0.2cm}\\
{\bf Step 5.} The weak maximization condition.\vspace{0.1cm} \\
By \eqref{w12mp1},  \eqref{w12mp2}(b), and the transversality equations of Proposition \ref{mpw12aproxw12},  we have that
\begin{equation} \label{lastproof6} \dot{q}_{\gk}=-	(\omega_{\gk})\tran p_{\gk}\xrightarrow[{L^1}]{\textnormal{weakly}}-(\oomega)\tran \p\;\;\hbox{and}\;\;q_{\gk}(0)=\l_{\gk}(u_{\gk}(0)-\u(0)).\end{equation}
The uniform boundedness in $L^{\infty}$ of the sequences $(p_{\gk})_{k}$ and $(\omega_{\gk})_{k}$ give that $(\dot{q}_{\gk})_{k}$ is uniformly bounded in $L^{\infty}$, asserting the equicontinuity of $(q_{\gk})_{k}$. Moreover, the nontriviality condition of Proposition \ref{mpw12aproxw12} gives the uniform boundedness of the sequence $(q_{\gk})_{k}$. Hence, by Arzel\`a-Ascoli theorem, the sequence $(q_{\gk})_{k}$ admits a subsequence, we do not relabel, that converges uniformly to an absolutely continuous function $q$ satisfying $q(0)=0$ (by (\ref{lastproof6})(b), where  $\l_{\gk}\f\l$ and $u_{\gk}(0)\f\u(0)$ as $k\f \infty$). Moreover, up to a subsequence, we also obtain that \begin{equation} \label{lastproof7} \dot{q}_{\gk} \xrightarrow[{L^1}]{\textnormal{weakly}}\dot{q}.\end{equation}   
Hence, (\ref{lastproof6})(a) and the uniqueness of the $L^1$-weak limit yield that \begin{equation} \label{lastproof13} \dot{q}(t)=-(\oomega(t))\tran \p(t),\;\; \sp t\in [0,1]\; \textnormal{a.e.}\end{equation} 

We proceed to study the convergence of the sequence of $NBV$-functions, $(\Omega_{\gk})_k$, obtained in Proposition \ref{mpw12aproxw12}.  The maximization condition $(iv)$, therein, implies  that, for $t\in[0,1]$ a.e.,  
\begin{equation} \label{lastproof8} \Omega_{\gk}(t)=-q_{\gk}(t)+\underbrace{\l_{\gk}(\dot{u}_{\gk}(t)-\dot{\bar{u}}(t))}_{\ell_{\gk}(t)}.\end{equation}
Without loss of generality, we can assume that  \eqref{lastproof8} is satisfied for all $t\in[0,1]$.  In fact, if $\l_{\gk}=0$, using the transversality conditions of Proposition \ref{mpw12aproxw12} and that $\Omega_{\gk}\in NBV[0,1]$,   we get that $\Omega_{\gk}(0)=-q_{\gk}(0)=0$ and  $\Omega_{\gk}(1)=-q_{\gk}(1)$, and hence,  by the right continuity of $\Omega_{\gk}$ and the continuity of $q_{\gk}$,  \eqref{lastproof8}  is equivalent to  $\Omega_{\gk} \equiv - q_{\gk}$. If, however,  $\l_{\gk} >0$, then by modifying the values of $(\dot{u}_{\gk}-\dot{\bar{u}})$ on  the set of Lebesgue measure zero, we  have \eqref{lastproof8} satisfed for all $t\in[0,1]$, and hence, $\ell_{\gk}\in BV[0,1]$ is right continuous on $(0,1)$, and satisfies  $\ell_{\gk}(0)= q_{\gk}(0)$ and $\ell_{\gk}(1)=0$. Furthermore, 
since  $\l_{\gk}\f \l$ and $\dot{u}_{\gk}$ strongly converges in $L^2$ to $\dot{\u}$, the sequence $(\ell_{\gk})_k$ strongly converges in $L^2$ to $\ell=0$.
 
We claim that $(\Omega_{\gk})_k$, considered  as a sequence of continuous linear functionals on $C([0,1];\R^m)$,  admits  a subsequence, we do not relabel, that converges weakly* to $-q$. Since $\Omega_{\gk}$ satisfies \eqref{lastproof8}, where the sequence of absolutely continuous functions $(q_{\gk})_k$ converges uniformly to $q\in AC([0,1];\R^m)$ and, by (\ref{lastproof7}), $(\dot{q}_{\gk})_k$ converges weakly in $L^1$ to $\dot{q}$, then it is equivalent  to show that  the $BV$-sequence $(\ell_{\gk})_k$ converges in $C^*([0,1];\R^m)$ to $0$.
 The uniform boundedness of the sequence   $(\ell_{\gk})_k$ shall follow once we show  the uniform boundedness of  $(\Omega_{\gk})_k$.  For this latter,  the nontriviality condition of Proposition \ref{mpw12aproxw12}, implies that the sequence $(\mu^{\bbbp o}_{\gk})_k$ is uniformly bounded, and hence, it has a subsequence, we do not relabel, that converges weakly* to a $\mu^{\bbbp o}\in C^{\oplus}([0,1];\R^{m})$. Moreover, by condition $(v)$ of Proposition \ref{mpw12aproxw12} and Remark \ref{CQremark}$(i)$,  $\|\b_{\gk}(t)\|\le 1$, except on a set of $\mu^{\bbbp o}_{\gk}$-measure zero. Thus, using that  \begin{equation}\label{omegaformula} \Omega_{\gk}(t)= \int_{[0,t]} \beta_{\gk}(s)\mu^{\bbbp o}_{\gk}(ds),\;\;\forall t\in(0,1],\;\;\hbox{and}\;\;\Omega_{\gk}(0)=0,\end{equation} we obtain that the sequence $(\Omega_{\gk})_k$ is uniformly bounded, and so is the sequence $(\ell_{\gk})_k$.  Hence, to get that the bounded $BV$-sequence $(\ell_{\gk})_k$ converges weakly* to $0$, by the Banach-Steinhaus theorem in \cite[p.482]{kadets}, it is sufficient to show that 
$$\lim_{k\f \infty}\int_0^1 \<h(t),d{\ell}_{\gk}(t)\>=0,\;\;\forall h\in C^1([0,1];\R^m).$$
Fix $h\in C^1([0,1];\R^m)$. Using an integration by parts and that $\Omega_{\gk}\in NBV,$ we get
\begin{eqnarray*}\int_0^1 \<h(t),d{\ell}_{\gk}(t)\> &=&\<h(1),\ell_{\gk}(1)\>-\<h(0),\ell_{\gk}(0)\>-\int_0^1 \left\<\dot{h}(t),\ell_{\gk}(t)\right\>\sp dt\\ &=&\left[-\<h(0),q_{\gk}(0)\> -\int_0^1 \left\<\dot{h}(t),\ell_{\gk}(t)\right\>\sp dt\right]\;  \xrightarrow[{k\f \infty}]{}0,
	\end{eqnarray*}
since $(\ell_{\gk})_k$  strongly converges  in $L^2$ to $0$, and $q_{\gk}(0)\f q(0)=0$. This terminates the proof of the claim, that is, 
\begin{equation} \label{lastproof9}\Omega_{\gk}\xrightarrow[{C^*([0,1];\R^m)}]{\textnormal{weakly*}} -q.\end{equation}
By \cite[p. 484, \#8]{kadets},  we also have that $ \Omega_{\gk}(t)\f -q(t),\;\forall\sp t\in[0,1].$

Now define the signed measure $\mu_{\gk}(dt):=\beta_{\gk}(t)\mu_{\gk}^{\bbbp o}(dt)$. From \eqref{omegaformula} we have \begin{equation} \label{omegatomu}\Omega_{\gk}(t)= \mu_{\gk}[0,t], \;\;\forall t\in(0,1].\end{equation}
Using  that $\mu_{\gk}^{\bbbp o}\xrightarrow[{k\f \infty}]{\textnormal{weakly*}}\mu^{\bbbp o}$, and that  Proposition \ref{mpw12aproxw12}$(v)$ holds true,  then, by  applying \cite[Proposition 9.2.1]{vinter} to the following data: \begin{itemize}[leftmargin=*]
 \item $A_{\gk}(t):=\partial_u^{>}d(u_{\gk}(t), U(t)\cap \bar{B}_{\delta}(\u(t))) $ for all $t\in [0,1]$,
 \item $A(t):=\partial_u^{>}d(\u(t), U(t)\cap \bar{B}_{\delta}(\u(t)))$ for all $t\in [0,1]$,
 \item $\gamma_{\gk}:=\beta_{\gk}$, $\mu_{\gk}:=\mu_{\gk}^{\bbbp o}$ and $\mu_0:=\mu^{\bbbp o}$, \end{itemize}
we obtain a Borel measurable function $\beta\colon [0,1]\f\R^m$ and $\mu\in C^*([0,1];\R^m)$ such that
$$\mu_{\gk}\xrightarrow[{k\f \infty}]{\textnormal{weakly*}}\mu,\;\;\mu(dt)=\beta(t)\mu^{\bbbp o}(dt)\;\hbox{and}\;\beta(t)\in \partial_u^{>}d(\u(t), U(t)\cap \bar{B}_{\delta}(\u(t)))\;\;\mu^{\bbbp o}\;\textnormal{a.e.}$$
Since  $u_{\gk}$ converges uniformly to $\bar{u}$, and $\supp\{\mu_{\gk}^{\bbbp o}\}$ satisfies  Proposition \ref{mpw12aproxw12}$(v)$, we deduce that
\begin{equation}\label{scriptA}\supp\{\mu^{\bbbp o}\} \subset  \mathcal{A}:=\left\{t\in [0,1]: (t,\bar{u}(t))\in \bdry\Gr\left[U(t)\cap \bar{B}_{\delta}(\u(t))\right]\right\}.\end{equation} 
Adjust $\b(\cdot)$ on the set of $\mu^{\bbbp o}$-measure zero to arrange
$$t\in \mathcal{A} \implies 
\beta(t)\in \partial_u^{>}d(\u(t), U(t)\cap \bar{B}_{\delta}(\u(t))),$$
and hence,  using \cite[Formula (9.17)]{vinter}, we have 
\begin{equation}\label{wmc2} \beta(t)\in \left[\conv\bar{N}^L_{U(t)\cap \bar{B}_{\delta}(\u(t))}(\u(t))\cap \left(\bar{B}\setminus\{0\}\right)\right], \;\;\forall\sp t\in \mathcal{A}.\end{equation}
Thus, by \eqref{lastproof9} and \eqref{omegatomu}, we obtain that  $-dq(t)=\mu(dt)=\beta(t) \mu^{\bbbp o}(dt)$.  Using \eqref{lastproof13}, we arrive to
\begin{equation}\label{wmc1}-dq(t)= (\oomega(t))\tran \p(t)\sp dt=\beta(t)\mu^{\bbbp o}(dt).\end{equation} 
Next, we decompose $\mu^{\bbbp o}(dt)=m(t)dt+\mu_s(dt)$ for some {\it nonnegative} $L^1$-function $m(\cdot)$ and some nonnegative Borel measure $\mu_s$ totally singular with respect to Lebesgue measure.  Clearly $m(t)=0$, for all 
$t\in \mathcal{A}^c $, and hence, \eqref{wmc2} implies that 
$$ \beta(t) m(t) \in \conv\bar{N}^L_{U(t)\cap \bar{B}_{\delta}(\u(t))}(\u(t)), \;\; \forall \sp t\in[0,1].$$
Using \eqref{wmc1} we get that $(\oomega(t))\tran \p(t)\sp dt=\beta(t)m(t)dt+\beta(t)\mu_s(dt).$ This gives that  $(\oomega(t))\tran \p(t)= \beta(t)m(t)$, for $t\in [0,1]$ a.e. Therefore, \begin{equation}\label{wmc3} \oomega(t)\tran \p(t)\in \conv\bar{N}^L_{U(t)\cap \bar{B}_{\delta}(\u(t))}(\u(t)),\;\; \forall\sp t\in [0,1]\; \textnormal{a.e.}\end{equation}

We proceed to prove that the ``\sp In addition" part of the weak maximization condition. We assume the existence of $\e_{\bbbp o}>0$ and $r>0$ such that $U(t)\cap \bar{B}_{\e_{\bbbp o}}(\u(t))$ is $r$-prox-regular for all $t\in [0,1]$. From \eqref{wmc3} and using Lemma \ref{newlemmaprox}, we obtain that for all $t\in[0,1]$ a.e. $$\textstyle  \left\<\oomega(t)\tran \p(t),u-\u(t)\right\>\leq \frac{\|\oomega(t)\tran \p(t)\|}{\min\{\e_{\bbbp o},2r\}}\|u-\u(t)\|^2,\;\;\hbox{for all}\;u\in U(t).$$
Therefore, for \textnormal{a.e.} $t\in [0,1]$, $$\textstyle \left\<\omega(t)\tran p(t),u\right\>- \frac{\|\oomega(t)\tran \p(t)\|}{\min\{\e_{\bbbp o},2r\}}\|u-\u(t)\|^2  \leq \left\<\oomega(t)\tran \p(t),\u(t)\right\> ,\;\;\hbox{for all}\;u\in U(t).$$
This terminates the proof of the weak maximization condition.\vspace{0.2cm} \\
{\bf Step 6.} The nontriviality condition.\vspace{0.1cm}\\
It is sufficient to prove its equivalent condition: $\|\p(1)\| + \l\not=0$. Taking the  limit   as $k\f\infty$ in the nontriviality condition of Proposition \ref{mpw12aproxw12}, and using the convergence of $\p_{\gk}(1)$ to $\p(1)$, the uniform convergence of $q_{\gk}$ to $q$, the weak* convergence of $\mu^{\bbbp o}_{\gk}$ to $\mu^{\bbbp o}$, and the convergence of $\l_{\gk}$ to $\l$, we get that \begin{equation}  \label{lastproof12} 1 = \|\p(1)\|+ \|q\|_\infty + \|\mu^{\bbbp o}\|\TV + \l.\end{equation}
We argue by contradiction. If  $\p(1)=0$ and $\l=0$, by \eqref{ntc2} we obtain that $\p=0$. Hence (\ref{lastproof13}) yields that $\dot{q}(t)=0$ for a.e. $t\in[0,1]$. This gives that \begin{equation} \label{ntc3} \beta(t) \mu^{\bbbp o}(dt)=-dq(t)=0\;\;\hbox{and}\;\;q(t)=q(0)+\int_0^t \dot{q}(\tau)\sp d\tau=0,\;\;\forall\sp t\in[0,1].\end{equation}	
Since, by  \eqref{scriptA} and \eqref{wmc1},  $\supp\{\mu^{\bbbp o}\} \subset  \mathcal{A}$ and $\beta(t)\not=0$ for all $t\in\mathcal{A}$,  the first equation of \eqref{ntc3} yields that $\mu^{\bbbp o}=0$. Therefore, $\|\p(1)\| + \|q\|_\infty + \|\mu^{\bbbp o}\|\TV + \l=0$ which contradicts (\ref{lastproof12}). This terminates the proof of the nontriviality condition, and then, the proof of the conditions $(i)$-$(vi)$ of Theorem \ref{mpw12bisw12} is completed.  \end{proof}

\section{Appendix} \label{auxresults} In this section, we present the proofs of Theorems \ref{prop3new} and \ref{lasthopefullybis}, and we establish auxilliary results that are used in different places of the paper. We begin by the proof of Theorem \ref{prop3new}.

\begin{proof}\bp{\it of Theorem} \ref{prop3new}. $(i)$: Having a uniform bounded derivative in $L^2([0,1];\R^m)$, the $W^{1,2}$-sequence $u_{\gk}$  is equicontinuous. Since, by (A4.2),   the compact sets $U(t)$ are uniformly bounded,  then $u_{\gk}$ is uniformly bounded in $C([0,1];\R^m)$, and hence,  Arzel\`a-Ascoli theorem asserts that  $u_{\gk}$ admits a subsequence, we do not relabel, that converges uniformly to an absolutely continuous function $u$ with $u(t)\in U(t)$ for all $t\in [0,1]$. As   $\dot{u}_{\gk}$ is uniformly bounded in $L^2([0,1];\R^m)$,  then, up to a subsequence,  it is  weakly convergent in $L^2$.  The boundedness of $(u_{\gk}(0))_k$ then yields that the $L^2$-weak limit of  $\dot{u}_{\gk}$ is $\dot{u}$, and whence, $u\in \mathscr{W}$. The fact that $x$  is the unique solution to  $(D)$ corresponding to  $(x_0,u)$, and the proceeding statements of this part, follow immediately from Theorem \ref{prop2}$(ii)$. \vspace{0.2cm}\\
 $(ii)$:  Now, assume that $c_{\gk}\in C(k)$ for $k\ge k_{o}$, where $k_{o}$ is the rank in Theorem \ref{propooufnew}.  
 
 Let us first show that $(\xi_{\gk})_k$ has uniform bounded variations. Since $\psi$ and $\nabla\psi$ are  Lipschitz on $C$ and $x_{\gk}$ is Lipschitz for $k\ge k_o$, we deduce that, for $k\ge k_o$, the function $\xi_{\gk}(\cdot)\nabla\psi(x_{\gk}(\cdot))$,  where $\xi_{\gk}  $ is defined in (\ref{defxi}), is Lipschitz continuous on $[0,1]$. Similarly,  the  Lipschitz property on $C\times (\mathbb{U} +\tilde{\rho}\bar{B})$  of $f(\cdot,\cdot)$  (and then of $f_{\Vphi}(\cdot,\cdot)$) and the fact that  $(x_{\gk}, u_{\gk})$ is in $W^{1,\infty}\times W^{1,2}$, yield that  $f_{\Vphi}(x_{\gk}(\cdot),u_{\gk}(\cdot))$ is in $W^{1,2}([0,1];\R^n)$. Hence,  
 \begin{equation*}\label{zetak}
 \zeta_{\gk}(t):=\frac{d}{dt}{f_{\Vphi}}(x_{\gk}(t),u_{\gk}(t)),\end{equation*} 
 exists for almost all $t\in[0,1]$. By writing  
 $f_{\Vphi}= (f^1_{\Vphi},\cdots, f^n_{\Vphi})^{\tran}$, and using that $x_{\gk}(t)\in \inte C$ (for all $t\in [0,1])$, and $u_{\gk}(t) \in U(t)\subset \mathbb{U}$ (for $t\in[0,1]$\, a.e.), it follows from the proof of \cite[Theorem 2.1]{zeidan}, that 
 \begin{equation*}
 \zeta^i_{\gk}(t)\in \<\partial f^i_{\Vphi}(x_{\gk}(t),u_{\gk}(t)), (\dot{x}_{\gk}(t), \dot{u}_{\gk}(t))\>, \;\; t\in[0,1]\; \hbox{a.e.},  \; \forall \sp i=1,\cdots, n.
  \end{equation*}
Since $(\|\dot{u}_{\gk}\|_2)_k$ is assumed to be bounded, and,  by Theorem  \ref{propooufnew},  $(\|\dot{x}_{\gk}\|_{\infty})_k$ is  bounded, then the sequence $(\|\zeta_{\gk}\|_2)_k$ is bounded by some $M_{\zeta} >0$ that depends  on $\bar{M}$, $\bar{M}_\psi$, $\eta$,  and the bound of   $(\|\dot{u}_{\gk}\|_2)_k$. 

As  $f_{\Vphi}(x_{\gk}(\cdot),u_{\gk}(\cdot)) \in W^{1,2}([0,1];\R^n)$, the right hand side of $(D_{\gk})$ yields that  $\dot{x}_{\gk}$ is in $W^{1,2}([0,1];\R^n)$, and so is the function $\lvert\<\nabla\psi(x_{\gk}(\cdot)),\dot{x}_{\gk}(\cdot)\>\rvert$. This also implies that $\xi_{\gk}\in W^{2,2}([0,1];\R^{+})$, due to 
 $$\dot{\xi}_{\gk}(t)=\gk^2 e^{\gk \psi(x_{\gk}(t))}\<\nabla\psi(x_{\gk}(t)),\dot{x}_{\gk}(t)\>.$$ 
Next, calculating $\ddot{x}_{\gk}$ through ($D_{\gk}$) in terms of $\zeta_{\gk}$ and $\dot{x}_{\gk}$, and using the fact that for $h\in AC([0,1];\R)$ we have \begin{equation*}\label{discov} \frac{d}{dt} \lvert h(t)\rvert=\left(\frac{d}{dt}h(t)\right)\sign(h(t))\;\;\hbox{a.e.}\;t\in(0,1),\footnote{The function $\sign\colon\R\f\R$ is defined by: $ \sign(x)=\frac{x}{\lvert x \rvert}$ for $x\not=0$, and $0$ for $x=0$.}\end{equation*}
 it follows  that  there exist measurable functions  $\vartheta^1_{\gk}$ and $\vartheta^2_{\gk}$ whose values at $t$ are in  $\partial^2\psi(x_{\gk}(t))$, for almost all $t\in  [0,1]$, such that, for  $t\in [0,1]$ a.e., we have
\begin{eqnarray*}&&\frac{d}{dt} \lvert\<\nabla\psi(x_{\gk}(t)),\dot{x}_{\gk}(t)\>\rvert\\[-0.3cm]&=&\left[\left\<\vartheta^1_{\gk}(t)\dot{x}_{\gk}(t),\dot{x}_{\gk}(t)\right\>+\left\<\nabla\psi(x_{\gk}(t)),\ddot{x}_{\gk}(t)\right\>\right]\overbrace{\sign (\<\nabla\psi(x_{\gk}(t)),\dot{x}_{\gk}(t)\>)}^{\a(t)}\\[3pt] &=&\left[\left\<\vartheta^1_{\gk}(t)\dot{x}_{\gk}(t),\dot{x}_{\gk}(t)\right\>+ \<\nabla\psi(x_{\gk}(t)),\zeta_{\gk}(t) - \xi_{\gk}(t)\vartheta^2_{\gk}(t)\dot{x}_{\gk}(t)\>\right]\a(t)\\[3pt]&& - \<\underbrace{\gk \xi_{\gk}(t)\<\nabla \psi(x_{\gk}(t)),\dot{x}_{\gk}(t)\>}_{\dot{\xi}_{\gk}(t)} \nabla\psi(x_{\gk}(t)), \nabla\psi(x_{\gk}(t))\>\a(t)\\[3pt] &=&\left[\left\<\vartheta^1_{\gk}(t)\dot{x}_{\gk}(t),\dot{x}_{\gk}(t)\right\>+ \left\<\nabla\psi(x_{\gk}(t)),\zeta_{\gk}(t)- \xi_{\gk}(t)\vartheta^2_{\gk}(t)\dot{x}_{\gk}(t)\right\>\right]\a(t)\\[3pt]&& - \gk \xi_{\gk}(t)\, 
\lvert\<\nabla \psi(x_{\gk}(t)),\dot{x}_{\gk}(t)\>\rvert\;\|\nabla\psi(x_{\gk}(t))\|^2.
\end{eqnarray*}
Integrating both sides on $[0,1]$ and using the boundedness of $(\|\dot{x}_{\gk}\|_{\infty})_k$ and $(\|\xi_{\gk}\|_{\infty})_k$ (by Theorem \ref{propooufnew}), and assumption (A2.1), we get the existence of a constant $\tilde{M}_1$ depending on $\bar{M}$, $M_\psi$, $\bar{M}_\psi$, $\eta$, and $M_{\zeta}$ such that \begin{eqnarray*} \int_0^1 \lvert\dot{\xi}_{\gk}(t)\rvert \|\nabla\psi(x_{\gk}(t))\|^2\sp dt\leq \tilde{M}_1.\\[-0.2cm] \end{eqnarray*}
Using (\ref{velo0}) and assumption (A2.2), it follows that 
\begin{eqnarray*}&& \int_0^1 \lvert\dot{\xi}_{\gk}(t)\rvert\sp dt = \int_0^1 \gk^2 e^{\gk \psi(x_{\gk}(t))}\lvert\<\nabla\psi(x_{\gk}(t)),\dot{x}_{\gk}(t)\>\rvert\sp dt \\[3pt]&=&  \int_{\{t : \|\nabla\psi(x_{\gk}(t))\|\leq \eta\}}\gk^2 e^{\gk\psi(x_{\gk}(t))} \lvert\<\nabla\psi(x_{\gk}(t)),\dot{x}_{\gk}(t)\>\rvert\sp dt\\[3pt]&+&\int_{\{t : \|\nabla\psi(x_{\gk}(t))\|> \eta\}}\gk^2 e^{\gk\psi(x_{\gk}(t))}\lvert\<\nabla\psi(x_{\gk}(t)),\dot{x}_{\gk}(t)\>\rvert\frac{\|\nabla\psi(x_{\gk}(t))\|^2}{\|\nabla\psi(x_{\gk}(t))\|^2}\sp dt \\[4pt] &\leq &  \eta\left(\bar{M}+\frac{2\bar{M}\bar{M}_\psi}{\eta}\right)\gk^2 e^{-\gk\e} + \frac{\tilde{M}_1}{\eta^2}\leq\eta\left(\bar{M}+\frac{2\bar{M}\bar{M}_{\psi}}{\eta}\right)+ \frac{\tilde{M}_1}{\eta^2}=:\tilde{M}_2, \end{eqnarray*}
for $k$ sufficiently large, where $\tilde{M}_2$ depends  on the given constants, $\bar{M}$, $\bar{M}_\psi$, $M_{\psi}$, $\eta$, and on the bound of $(\|\dot{u}_{\gk}\|_2)_k$. Therefore, the sequence ${\xi}_{\gk}$ satisfies, for $k$ sufficiently large,
$V_0^1(\xi_{\gk})\leq \tilde{M}_2.$

On the other hand, by Theorem \ref{propooufnew},     $\|\xi_{\gk}\|_{\infty}\le  \frac{2\bar{M}}{\eta}$ for all $k\ge k_o$.  Hence,   by Helly first theorem, $\xi_{\gk}$ admits a pointwise convergent subsequence, we do not relabel, whose limit is some function $\tilde{\xi}\in BV([0,1];\R^{+})$ with $\|\tilde{\xi}\|_{\infty}\le  \frac{2\bar{M}}{\eta}$ and $V_0^1(\tilde{\xi})\leq \tilde{M}_2$.  Being pointwise convergent to $\tilde{\xi}$ and uniformly bounded in $L^{\infty}$, $\xi_{\gk}$ {\it strongly} converges in $L^2$ to $\tilde{\xi}$.
However,  by part$(i)$ of this theorem, $\xi_{\gk}$ converges weakly in $L^2$ to $\xi$, hence,  $\tilde{\xi}=\xi$. Thus,  $\xi_{\gk}$ converges pointwise and strongly in $L^2$ to $\xi$, and $\xi \in BV([0,1];\R^{+})$  with  \begin{equation} \label{existencenew} V_0^1(\xi)\leq \tilde{M}_2.
\end{equation}
As $f$ is $M$-Lipschitz on  $C\times (\mathbb{U}+\tilde{\rho}\bar{B})$, $u\in \mathscr{W}$,  $\nabla\psi$ is Lipschitz, and  $\xi\in BV$, then equation \eqref{admissible-P}, which is satisfied by $(x,u,\xi)$,  now holds for all $t\in[0,1]$. This yields that \eqref{boundxi} is also valid for all $t\in[0,1]$, and  that  $\dot{x}\in BV([0,1];\R^n)$.

It remains to show that $\dot{x}_{\gk}$  has uniform  bounded variations and converges pointwise and strongly in $L^2$ to $\dot{x}\in BV([0,1];\R^n)$. 
 Since  $\xi_{\gk}(\cdot)\nabla\psi(x_{\gk}(\cdot))$ is Lipschitz, $u_{\gk}\in \mathscr{W}$, and $f$ is $M$-Lipschitz on $C\times (\mathbb{U}+\tilde{\rho}\bar{B})$, then $(D_{\gk})$ holds for all $t\in [0,1]$, that is, 
$$\dot{x}_{\gk}(t)= f_{\Vphi}(x_{\gk}(t),u_{\gk}(t))-\xi_{\gk}(t)\nabla\psi(x_{\gk}(t)), \;\;\;\forall\,t\in [0,1].$$
Hence, using  part$(i)$ of this theorem,  the continuity of $f_{\Vphi}(\cdot, \cdot)$,   that  the sequence $(x_{\gk},u_{\gk},  \xi_{\gk})_k$ has uniform bounded variations and converges pointwise to $(x,u,\xi)$,  and that $(x,u,\xi)$ satisfies \eqref{admissible-P} for all $t\in [0,1]$,  
we obtain that the sequence   $\dot{x}_{\gk}$ is of bounded variations and converges pointwise to $\dot{x}\in BV([0,1];\R^n)$. 
Since $(\|\dot{x}_{\gk}\|_{\infty})_k$  is  bounded, we conclude that the sequence $\dot{x}_{\gk}$  also converges {\it strongly} in $L^2$ to $\dot{x}$. Therefore, $x_{\gk}$ converges strongly in the norm topology of $W^{1,2}$ to $x$. \end{proof}

We proceed to present the proof of our approximation result, namely, Theorem \ref{lasthopefullybis}.

\begin{proof}\bp{\it of} Theorem \ref{lasthopefullybis}. We consider $k$ large enough so that $C_0(k)\subset \tilde{C}_0(\delta)$ and $C_1(k)\subset \tilde{C}_1(\delta)$, see (\ref{c01kin}). By Corollary \ref{newstrongconv},  $\x_{\gk}\f\x$ {\it strongly}  in $W^{1,2}$, and hence, for $k$ sufficiently large,  $\x_{\gk}(t)\in \bar{B}_{\delta}(\x(t))$ for all $t\in [0,1]$, and $\y_{\gk}(1)\in[-\delta,\delta]$, where $$\y_{\gk}(t):=\int_0^t \|\dot{\x}_{\gk}(s)-\dot{\x}(s)\|^2\sp ds.$$ Thus, the triplet state $(\x_{\gk},\bar{y}_{\gk},\bar{z}_{\gk}:=0)$ solves $(D_{\gk})$ for $((\bar{c}_k,0,0),\u)$, with   $\x_{\gk}(t)\in \bar{B}_{\delta}(\x(t))$ and  $\u(t)\in U(t)\cap\bar{B}_\delta(\u(t))$, for all $t\in [0,1]$,  and $(\x_{\gk}(1),\y_{\gk}(1), \bar{z}_{\gk}(1)=0) \in C_1(k)\times [-\delta,\delta]\times [-\delta,\delta]$. Therefore, for $k$  sufficiently large, $(\x_{\gk},\y_{\gk},0,\u)$ is an {\it admissible} quadruplet for $(P_{\gk})$.  Using the continuity of $g$ on $\tilde{C}_0(\delta)\times \tilde{C}_1(\delta)$ and the definition of $J(x,u,z,u)$, we obtain that $J(x,u,z,u)$ is bounded from below. Hence, for $k$ large enough, $\inf (P_{\gk})$  is {\it finite}. 

Fix $k$ sufficiently large so that $\inf (P_{\gk})$  is {\it finite}. Let $(x_{\gk}^n,y^n_{\gk},z^n_{\gk},u^n_{\gk})_n\in W^{1,2}([0,1];\R^n)\times AC([0,1];\R)\times AC([0,1];\R)\times\mathscr{W} $  be a minimizing sequence for $(P_{\gk})$, that is, the sequence is admissible for ($P_{\gk})$ and 
\begin{equation} \label{chadin1w} \lim_{n\f\infty} J(x_{\gk}^n,y^n_{\gk},z^n_{\gk},u^n_{\gk})= \inf (P_{\gk}).
\end{equation}
Since for each $n$, $x_{\gk}^n$ solves $(D_{\gk})$ for $(x_{\gk}^n(0),u^n_{\gk})$, and $(x_{\gk}^n(0))_n \in C_0(k)\subset C$, then, by  (\ref{eq2}), we have that the sequence $(x_{\gk}^n)_n$ is uniformly bounded in $C([0,1];\R^n)$ and the sequence $(\dot{x}_{\gk}^n)_n$ is uniformly bounded in $L^2$. On the other hand,  from (A4.2), we have that  sets $U(t)$ are compact and uniformly bounded, then, the sequence $(u^n_{\gk})_n$, which is in $\mathscr{W}$, is uniformly bounded in $C([0,1];\R^m)$. Moreover, its derivative  sequence, $(\dot{u}^n_{\gk})_n$, must be uniformly bounded in $L^2$. Indeed, if this is not true, then  there exists a subsequence  of $\dot{u}^n_{\gk}$, we do not relabel, such that $\lim\limits_{n\f \infty} \|\dot{u}^n_{\gk}\|_2=\infty.$ Using that $g$ is bounded on $\tilde{C}_0(\delta)\times \tilde{C}_1(\delta)$, it follows that \begin{eqnarray*} J(x_{\gk}^n,y^n_{\gk},z^n_{\gk},u^n_{\gk}) &\geq&  \min_{(c_1,c_2)\in \tilde{C}_0(\delta)\times \tilde{C}_1(\delta)}g(c_1,c_2)+\frac{1}{2}z^n_{\gk}(1)\\&=& \min_{(c_1,c_2)\in \tilde{C}_0(\delta)\times \tilde{C}_1(\delta)}g(c_1,c_2)+\frac{1}{2}\|\dot{u}^n_{\gk}-\dot{\u}\|^2_2\end{eqnarray*} and hence, $\lim\limits_{n\f \infty} J(x_{\gk}^n,y^n_{\gk},z^n_{\gk},u^n_{\gk})=\infty,$  contradicting (\ref{chadin1w}). Thus, also $(\dot{u}_{\gk}^n)_n$ is uniformly bounded in $L^2$. Therefore, by Arzel\`a-Ascoli theorem, along a subsequence (we do not relabel), the sequence $(x_{\gk}^n, u_{\gk}^n)_n$ converges  uniformly to a pair $(x_{\gk}, u_{\gk})$  and the sequence $(\dot{x}_{\gk}^n, \dot{u}_{\gk}^n)_n$ converges  weakly in $L^2$ to the pair $(\dot{x}_{\gk},\dot{u}_{\gk})$. \sloppy Hence, $(x_{\gk}, u_{\gk})\in  W^{1,2}([0,1];\R^n)\times \mathscr{W}$. Moreover, the following two inequalities hold \begin{equation}\label{uw12xw12}\|\dot{x}_{\gk}-\dot{\x}\|_2^2\leq \liminf_{n\f \infty} \|\dot{x}^n_{\gk}-\dot{\x}\|_2^2\;\;\hbox{and}\;\; \|\dot{u}_{\gk}-\dot{\u}\|_2^2\leq \liminf_{n\f \infty} \|\dot{u}^n_{\gk}-\dot{\u}\|_2^2. \end{equation}
Since $C_0(k)$, $C_1(k)$,  $\bar{B}_{\delta}(\x(t))$ and $U(t)\cap \bar{B}_{\delta}(\u(t))$ are closed for all $t\in[0,1]$, and from the uniform convergence, as $n\f\infty$,  of the sequence $(x_{\gk}^n, u_{\gk}^n)$ to $(x_{\gk}, u_{\gk})$, we get that the inclusions $x_{\gk}(0)\in C_0(k)$ and  $x_{\gk}(1)\in C_1(k)$, and  $x_{\gk}(t)\in \bar{B}_{\delta}(\x(t))$,  and $u_{\gk}(t)\in U(t)\cap \bar{B}_{\delta}(\u(t))$, for all $t\in[0,1]$. To prove that  $x_{\gk}$ is the solution of  $(D_{\gk})$ corresponding to  $(x_{\gk}(0),u_{\gk})$,  we first  use that  $x_{\gk}^n$ is the solution of $(D_{\gk})$ for $(x_{\gk}^n(0),u^n_{\gk})$,  that is,  for $t\in[0,1]$,
$$ {x}_{\gk}^n(t) =  x_{\gk}^n(0)+\int_0^t \left[{f}_\Vphi(x_{\gk}^n(s),u^n_{\gk}(s))- \gk e^{\gk\psi(x_{\gk}^n(s))} \nabla\psi(x_{\gk}^n(s))\right]\,ds. $$
Using  that  $(x_{\gk}^n(t), u_{\gk}^n(t))\in [C\cap \bar{B}_{\delta}(\x(t))]\times [U(t)\cap \bar{B}_{\delta}(\u(t))]$,   $f_{\Vphi}$ is Lipschitz on $[C\cap \bar{\mathbb{B}}_{\delta}(\x)]\times [(\mathbb{U}+\tilde{\rho}\bar{B}) \cap \bar{\mathbb{B}}_{\delta}(\u)]$, and $(x_{\gk}^n, u^n_{\gk})$  converges uniformly to  $(x_{\gk}, u_{\gk})$, then, upon  taking the limit, as $n\f\infty$, in the last equation we conclude  that $(x_{\gk},u_{\gk})$  satisfies the same equation, that is,
 $$\dot{x}_{\gk}(t)= {f}_\Vphi(x_{\gk}(t),u_{\gk}(t))- \gk e^{\gk\psi(x_{\gk}(t))} \nabla\psi(x_{\gk}(t)),\;\;t\in[0,1]\;\textnormal{a.e.}$$
We define for all $t\in [0,1]$, $$y_{\gk}(t):= \int_0^t \|\dot{x}_{\gk}(\tau)-\dot{\x}(\tau)\|^2\sp d\tau\;\;\hbox{and}\;\;z_{\gk}(t):= \int_0^t \|\dot{u}_{\gk}(\tau)-\dot{\u}(\tau)\|^2\sp d\tau.$$
Clearly we have: \begin{itemize}[leftmargin=*]
 \item $y_{\gk}\in AC([0,1];\R),\;\;\dot{y}_{\gk}(t)=\|\dot{x}_{\gk}(t)-\dot{\x}(t)\|^2,\;\; t\in[0,1] \hbox{ a.e.,}$ and $y_{\gk}(0)=0.$
 \item $z_{\gk}\in AC([0,1];\R),\;\;\dot{z}_{\gk}(t)=\|\dot{u}_{\gk}(t)-\dot{\u}(t)\|^2,\;\; t\in[0,1] \hbox{ a.e.,} $ and $z_{\gk}(0)=0.$
\end{itemize}
Moreover, since $\|\dot{x}^n_{\gk}-\dot{\x}\|_2^2=y^n_{\gk}(1)\in [-\delta,\delta]$ and $\|\dot{u}^n_{\gk}-\dot{\u}\|_2^2= z^n_{\gk}(1)\in [-\delta,\delta]$, the two inequalities of \eqref{uw12xw12} yield that \begin{equation} \label{ykandzk1} y_{\gk}(1)\in [-\delta,\delta]\;\;\hbox{and}\;\;z_{\gk}(1)\in [-\delta,\delta].\end{equation}
Hence, $(x_{\gk},y_{\gk},z_{\gk},u_{\gk})$ is admissible for $({P}_{\gk})$. Now  using \eqref{chadin1w} and the second inequality of \eqref{uw12xw12}, it follows that
\begin{eqnarray*}&&\inf (P_{\gk}) = \lim_{n\f\infty} {J}(x_{\gk}^n,y^n_{\gk},z^n_{\gk},u^n_{\gk})\\ &\bp\bp=&\bp \lim_{n\f\infty}\left(g(x^n_{\gk}(0),x^n_{\gk}(1))+ \frac{1}{2}\left(\|u^{n}_{\gk}(0)-\u(0)\|^2 +z_{\gk}^n(1)+ \|x^n_{\gk}(0)-\x(0)\|^2\right)\bp\right)\\&\bp\bp=&\bp g(x_{\gk}(0),x_{\gk}(1))\bbp+\bbp\frac{1}{2}\|u_{\gk}(0)-\u(0)\|^2 \bbp+\bbp\frac{1}{2}\liminf_{n\f\infty}\|\dot{u}_{\gk}^n-\dot{\u}\|_2^2 +\frac{1}{2}\|x_{\gk}(0)-\x(0)\|^2\\ &\bp\bp\geq&\bp g(x_{\gk}(0),x_{\gk}(1))+\frac{1}{2}\|u_{\gk}(0)-\u(0)\|^2 +\frac{1}{2}\|\dot{u}_{\gk}-\dot{\u}\|_2^2 +\frac{1}{2}\|x_{\gk}(0)-\x(0)\|^2\\ &\bp\bp=&\bp  {J}(x_{\gk},y_{\gk},z_{\gk},u_{\gk}).
 \end{eqnarray*}
Therefore, for each  $k$, large enough,  $(x_{\gk},y_{\gk},z_{\gk},u_{\gk})$ is optimal for $({P}_{\gk})$.

As Remark \ref{c0subc}  asserts that, for $k$ large, $C_0(k)\subset C(k)\subset C$, then,  Lemma \ref{invariance} and Theorem \ref{prop2}$(i)$ yield that the sequence $(x_{\gk}, \xi_{\gk})_k$, where $\xi_{\gk}$ is given via \eqref{defxi},  admits  a subsequence, not relabled, having $(x_{\gk})_k$ converging uniformly to some $x\in W^{1,2}([0,1];\R^n)$  with images in $C$, $(\dot{x}_{\gk}, \xi_{\gk})_k$ converging weakly in $L^2$ to $(\dot{x}, \xi)$ and $\xi$ supported on $I^0(x)$. 

Now, consider  the sequence $(u_{\gk})_k$, which is in $\mathscr{W}$. It has a uniformly bounded derivative in $L^2.$  In fact,   the admissibility of $(\x_{\gk},\bar{y}_{\gk},0,\u)$, and the optimality of $(x_{\gk},y_{\gk},z_{\gk},u_{\gk})$ for $({P}_{\gk})$, imply that  \begin{equation}\label{newimport2w12} {J}
(x_{\gk},y_{\gk},z_{\gk},u_{\gk})\leq g(\x_{\gk}(0),\x_{\gk}(1))+\frac{1}{2}\|\x_{\gk}(0)-\x(0)\|^2.
\end{equation}
This, together with the continuity of $g$ on $\tilde{C}_0(\delta)\times \tilde{C}_1(\delta)$,  the  uniform boundedness of the sequences $(x_{\gk})_k$ and  $(\x_{\gk})_k$, and  the boundedness of $U(0)$,  imply that for some $\hat{M}>0$ we have 
that \begin{eqnarray*} \|\dot{u}_{\gk}-\dot{\u}\|_2^2&\leq& 2 \left(g(\x_{\gk}(0),\x_{\gk}(1))-g(x_{\gk}(0),x_{\gk}(1)\right)+\|\x_{\gk}(0)-\x(0)\|^2\nonumber\\&-&\|u_{\gk}(0)-\u(0)\|^2-\|x_{\gk}(0)-\x(0)\|^2\leq{\hat{M}}\label{ugkbounded}.\end{eqnarray*}
Therefore, $(u_{\gk})_k$ has uniformly bounded derivative in $L^2$. Now since in addition we have that $x_{\gk}(0)\in C_0(k) \subset C(k)$, we are in a position to apply Theorem \ref{prop3new}. We obtain a subsequence (not relabeled) of  $u_{\gk}$, and $u\in \mathscr{W}$ such that $(x_{\gk}, u_{\gk})$ converges uniformly to $(x,u)$, $\dot{u}_{\gk}$ converges weakly in $L^2$ to $\dot{u}$, all the conclusions of Theorem \ref{propooufnew} hold including that $x_{\gk}(t)\in\inte C$ for all $t\in[0,1]$, $(\dot{x}_{\gk}, \xi_{\gk})$ converges {\it strongly} in $L^2$ to  $(\dot{x},\xi)$, $\dot{x}\in BV([0,1];\R^n)$,  $\xi \in BV([0,1];\R^+)$, and,
  for all $t\in [0,1]$, $(x, u,\xi)$ satisfies \eqref{admissible-P}-\eqref{boundxi} and  $x$ uniquely solves $(D)$  for $u$, that is,  
$$\begin{cases} \dot{x}(t)= f_\Vphi(x(t),u(t))-\xi(t) \nabla\psi(x(t))\in{f}(x(t),u(t))- \partial\varphi (x(t)),\;\;\forall\, t\in[0,1],\\x(0)\in C_0\cap \bar{B}_{\delta_{\bbbp o}}(\x(0)). \end{cases} $$
Moreover, we have  \begin{equation}\label{uw12} \|\dot{u}-\dot{\u}\|_2^2\leq \liminf_{k\f \infty} \|\dot{u}_{\gk}-\dot{\u}\|_2^2. \end{equation}
We shall show that $(x,u)$ is admissible for $(P)$. Since $\dot{x}_{\gk}$  converges strongly in $L^2$ to $\dot{x}$, and using \eqref{ykandzk1} and \eqref{uw12}, we have: \begin{itemize}[leftmargin=*]
\item $\displaystyle \|\dot{x}-\dot{\x}\|^2_2=\lim_{k\f\infty}\|\dot{x}_{\gk}-\dot{\x}\|^2_2=\lim_{k\f\infty}y_{\gk}(1)\overset{\eqref{ykandzk1}}{\in}[-\delta,\delta].$
\item $\displaystyle \|\dot{u}-\dot{\u}\|^2_2\overset{\eqref{uw12}}{\leq}\liminf_{k\f\infty}\|\dot{u}_{\gk}-\dot{\u}\|^2_2=\liminf_{k\f\infty}z_{\gk}(1)\overset{\eqref{ykandzk1}}{\in}[-\delta,\delta].$
 \end{itemize}
Hence, $ \|\dot{x}-\dot{\x}\|^2_2\leq \delta$ and $\|\dot{u}-\dot{\u}\|^2_2\leq \delta.$ Since $x_{\gk}(1)\in C_1(k)$, \eqref{limCi}(b) implies that $x(1)\in C_1\cap \bar{B}_{\delta_{\bbbp o}}(\x(0))$. Furthermore, the two inclusions $x_{\gk}(t)\in \bar{B}_{\delta}(\x(t))$ and  $u_{\gk}(t)\in U(t)\cap \bar{B}_{\delta}(\u(t))$, for all $t\in[0,1]$, together with the uniform convergence of $(x_{\gk},u_{\gk})$ to $(x,u)$, give that $x(t)\in \bar{B}_{\delta}(\x(t))$ and $u(t)\in U(t)\cap \bar{B}_{\delta}(\u(t))$, for all $t\in[0,1]$. Therefore, $(x,u)$ is admissible for $({P})$. Hence, the optimality of $(\x,\u)$ for $({P})$  yields that
\begin{equation}\label{mp2w12} g(\x(0),\x(1))\leq g(x(0),x(1)).	
 \end{equation} 
Now,  the uniform convergence of $\x_{\gk}$ to $\x$, (\ref{newimport2w12}), (\ref{mp2w12}), the continuity of $g$,  and  the convergence of $x_{\gk}(0)$ to $x(0)$, imply that \begin{equation} \label{sobolev} u(0)=\u(0)  \;\; \text{and}\;\; \liminf_{k\f \infty}\left(\ \|\dot{u}_{\gk}-\dot{\u}\|_2^2\right)=0,\;\;\hbox{and}\end{equation}  \begin{equation}\label{sobolevbis} x(0)=\x(0)\;\;\hbox{and}\;\;g(\x(0),\x(1)) =g(x(0),x(1)).\end{equation}
The equality (\ref{sobolev}) gives the existence of a subsequence of $u_{\gk}$, we do not relabel, such that  $\dot{u}_{\gk}$ converges {\it strongly }in $L^2$ to $\dot{\u}$. It results that $u_{\gk}$ converges uniformly to $\u$, and hence, $u=\u$. Consequently, $u_{\gk}$ converges strongly in $\mathscr{W}$ to $\u$. Moreover, as $u=\u$, the functions   $x$ and $\x$ solve the dynamic $({D})$ with the same control $\bar{u}$ and  initial condition, see (\ref{sobolevbis}), hence,   by the uniqueness of the solution of $({D})$ we have $x=\x$. Using Lemma \ref{characterizationD}, we obtain that also $\xi=\bar{\xi}$. Therefore, $${x}_{\gk}\xrightarrow[{C([0,1]; \R^n)}]{\textnormal{uniformly}}{\x}\;\;\hbox{and}\;\;(\dot{x}_{\gk}, \xi_{\gk})\xrightarrow[{L^2([0,1]; \R^n\times \R^{+})}]{\textnormal{strongly}}(\dot{\x},\bar{\xi}).$$
This yields that $(y_{\gk},z_{\gk})\f(0,0)$ in the strong topology of ${W^{1,1}([0,1];\R^+\times\R^+)}.$

Since $x_{\gk}(1)\in\left[\left(C_1\cap \bar{B}_{\delta_{\bbbp o}}(\x(1))\right)-\x(1)+\x_{\gk}(1)\right]\cap (\inte C)$ and $\x_{\gk}(1)$ converges to $ \x(1)$,  it follows that  $x_{\gk}(1)\in \big[\left(C_1\cap \bar{B}_{\delta_{\bbbp o}}(\x(1))\right)+\tilde{\rho}{B}\big]\cap(\inte C)$, for $k$ sufficiently large. On the other hand, the definition of $C_0(k)$ and the convergence of  $\rho_{k}$ to  $0$ yield that, for $k$  large enough,  $x_{\gk}(0)\in \big[\left(C_0\cap \bar{B}_{\delta_{\bbbp o}}(\x(0))\right)+\tilde{\rho}{B}\big]\cap(\inte C)$. This terminates the proof of Theorem \ref{lasthopefullybis}. \end{proof}

In the next lemma, a compactness result is derived  for  the solutions  of ($D$), where the controls  are restricted to be in $\mathscr{W}$ and  $x(1)\in C_1$.   The equivalence between $(D)$ and   equations \eqref{admissible-P}-\eqref{boundxi}  is employed.    

\begin{lemma}[Compact  trajectories and  controls  for $(D)$] \label{compact}  Assume  that  \textnormal{(A1)-(A4.3)} hold. Let  
 $(x_j,u_j)_j$ be a sequence in $W^{1,\infty}\times\mathscr{W}$ satisfying $(D)$ with $x_j(1)\in C_1$, for all $j\in \N$,   and   $(\|\dot{u}_j\|_2)_j$  be bounded. Consider $(\xi_j)_j$ the corresponding sequence  in $L^{\infty}([0,1];\R^+)$ obtained via \textnormal{Lemma \ref{characterizationD}}, that is,  
 $(x_j,u_j, \xi_j)$  satisfies \eqref{admissible-P}-\eqref{boundxi}, for all $j$. Then there exist a subsequence  of $(x_j,u_j, \xi_j)_j$, we do not relabel, and $(x,u, \xi)\in W^{1,\infty}([0,1];\R^n)\times \mathscr{W} \times L^{\infty}([0,1];\R^+)$  such that 
$(x_j,u_j)_j$ converges uniformly to $(x,u)$,  $(\dot{x}_{j}, \xi_{j})_j$  now converges {\it pointwise} to $(\dot{x}, {\xi})\in BV([0,1];\R^n)\times BV([0,1];\R^+)$, $\dot{u}_j$ converge weakly in $L^2$ to $\dot{u},\xi,$ and $(x,u,\xi)$ satisfies \eqref{admissible-P}-\eqref{boundxi} with $x(1)\in C_1$. In particular, $(x,u)$ is admissible for $(P)$ and $(x_j)_j$ converges to $x$ strongly in the norm topology of $W^{1,2}([0,1];\R^n)$.  \end{lemma}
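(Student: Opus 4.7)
The strategy is to combine standard compactness (Arzel\`a-Ascoli, weak $L^2$-compactness, Helly's theorems) with \textnormal{Corollary \ref{newstrongconv}} applied \emph{to each admissible triple} $(x_j,u_j,\xi_j)$ so as to produce \emph{uniform} $BV$-bounds, after which the limiting equations are verified via \textnormal{Remark \ref{newproof}} and \textnormal{Lemma \ref{characterizationD}}.

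First I would extract basic limits. By \textnormal{Remark \ref{lip.D}}, $(x_j)_j$ is uniformly bounded and $2\bar{M}$-Lipschitz, so Arzel\`a-Ascoli produces, along a subsequence, a uniform limit $x\in W^{1,\infty}([0,1];\R^n)$ with $\dot{x}_j\rightharpoonup\dot{x}$ weakly-$\ast$ in $L^\infty$. From $\|\dot{u}_j\|_2\le L$ and \textnormal{(A4.2)}, the sequence $(u_j)_j$ is uniformly bounded and H\"older-$\tfrac12$ equicontinuous; a further subsequence converges uniformly to some $u$ with $\dot{u}_j\rightharpoonup\dot{u}$ weakly in $L^2$. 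Closedness of $U(t)$ forces $u(t)\in U(t)$, hence $u\in\mathscr{W}$. The uniform bound $\|\xi_j\|_\infty\le \bar{M}/(2\eta)$ from \eqref{boundxi} yields, along a further subsequence, weak $L^2$-convergence of $\xi_j$ to some $\xi\in L^\infty$.

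The core step is to upgrade these weak limits to \emph{pointwise} limits for $\dot{x}_j$ and $\xi_j$. For each fixed $j$, the pair $(x_j,u_j)$ meets the hypotheses of \textnormal{Corollary \ref{newstrongconv}} with control $u_j\in\mathscr{W}$, so \eqref{existencenew} yields $\xi_j\in BV([0,1];\R^+)$ with $V_0^1(\xi_j)\le \tilde{M}_2^{(j)}$. Inspecting the proof of \textnormal{Theorem \ref{prop3new}(ii)}, the constant $\tilde{M}_2^{(j)}$ depends only on $\bar{M},\bar{M}_\psi,M_\psi,\eta$ and on the $L^2$-bound of $\dot{u}_j$; the boundedness of $(\|\dot{u}_j\|_2)_j$ therefore gives $\sup_j V_0^1(\xi_j)<\infty$. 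From the defining equation $\dot{x}_j=f_\Vphi(x_j,u_j)-\xi_j\nabla\psi(x_j)$ and the Lipschitz character of $f_\Vphi$ and $\nabla\psi$, this uniform $BV$-estimate propagates to $\sup_j V_0^1(\dot{x}_j)<\infty$. Helly's first theorem then provides, along a further subsequence, pointwise limits $\xi\in BV([0,1];\R^+)$ and $\dot{x}\in BV([0,1];\R^n)$; combined with the uniform $L^\infty$-bounds, these pointwise convergences are in fact strong in $L^2$.

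It remains to identify the limit as a solution. Since $(x_j,u_j,\xi_j)$ satisfies \eqref{admissible-P}, $x_j\to x$ uniformly, $u_j\to u$ pointwise, and $\xi_j\to\xi$ weakly in $L^2$, \textnormal{Remark \ref{newproof}} gives that $(x,u,\xi)$ satisfies \eqref{admissible-P}; \textnormal{Lemma \ref{characterizationD}} then forces $\xi$ to be the unique associated multiplier, so it lies in $L^\infty$, is supported on $I^0(x)$, and \eqref{boundxi} holds. Closedness of $C_0$ and $C_1$ propagates the endpoint constraints to the limit, so $(x,u)$ is admissible for $(P)$. The strong $L^2$-convergence of $\dot{x}_j$ together with the uniform convergence of $x_j$ delivers strong $W^{1,2}$-convergence of $x_j$ to $x$. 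The main obstacle is the \emph{uniform} $BV$-bound on $\xi_j$: it is precisely the explicit tracking in the proof of \textnormal{Theorem \ref{prop3new}(ii)} of the dependence of $\tilde{M}_2$ on $\|\dot{u}_{\gk}\|_2$ that enables the assumption $\sup_j\|\dot{u}_j\|_2<\infty$ to yield $\sup_j V_0^1(\xi_j)<\infty$ and thus unlock Helly.
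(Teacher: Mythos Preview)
Your approach is essentially the paper's own: extract uniform limits via Arzel\`a--Ascoli and weak $L^2$ compactness, invoke \textnormal{Corollary \ref{newstrongconv}} for each $j$ to obtain a uniform $BV$ bound on $\xi_j$ (with $\tilde{M}_2$ depending only on the uniform bound of $\|\dot u_j\|_2$), apply Helly, and pass to the limit in \eqref{admissible-P}. The paper obtains the pointwise convergence of $\dot x_j$ slightly more directly---by reading it off the right-hand side of \eqref{forallt} once $\xi_j$ is known to converge pointwise---but your route via a derived $BV$ bound on $\dot x_j$ is equivalent.

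There is one small but genuine slip. You write that ``\textnormal{Lemma \ref{characterizationD}} then forces $\xi$ to be \ldots\ supported on $I^0(x)$.'' It does not: the ``if'' direction of that lemma \emph{assumes} the support condition as a hypothesis, and the uniqueness clause is stated only within the class of nonnegative functions supported on $I^0(x)$. You must verify the support condition directly before invoking the lemma. The paper does this in one line: for $t\in I^{\-}(x)$, uniform convergence gives $x_j(t)\in\inte C$ for $j$ large, hence $\xi_j(t)=0$ by \eqref{boundxi}, and so $\xi(t)=0$ by pointwise convergence. With that inserted, your argument is complete and matches the paper's.
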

 \begin{proof}   Using  \eqref{lipx} in Remark \ref{lip.D} for the sequence $(x_j)_j$,   the boundedness of $(\|\dot{u}_j \|_2)_j$,  that $u_j(t))\in U(t)$ for all $t\in [0,1]$,  and  that the sets $U(t)$ are compact and uniformly bounded, by (A4.2),  then  Arzela-Ascoli’s theorem produces  a subsequence, we do not relabel, of  $(x_j , u_j )_j$, that converges uniformly to an absolutely continuous pair $(x, u)$ with $(x(t),u(t)\in C\times U(t)$ for all $t\in [0,1]$, and $(\dot{x}_{j}, \dot{u}_{j})_j$ converging weakly in $L^2$ to $(\dot{x}, \dot{u})$. As for all $j\in\N$,  $x_j(0)\in C_0$ and $x_j(1)\in C_1$, then (A4.1) and (A4.3) yield that  $x(0)\in C_0$ and $x(1)\in C_1$. Using Corollary \ref{newstrongconv}, we obtain $\|\xi_{j}\|_{\infty}\leq \frac{\bar{M}}{2\eta}$, $\xi_{j}\in BV([0,1];\R^+),$ and $V_0^1(\xi_{j})\leq \tilde{M}_2,$  where $\tilde{M}_2$ depends on the uniform bound of $(\|\dot{u}_j\|_2)_j$. By Helly's first theorem, $(\xi_{j})_j$ convergence {\it pointwise} to ${\xi}\in BV([0,1];\R^+).$  On the other hand,  Corollary \ref{newstrongconv} also gives that  \eqref{admissible-P} holds for all $t\in[0,1]$, that is, 
   \begin{equation}\label{forallt}
\dot{x}_{j}(t)=f_{\Vphi}(x_{j}(t),u_{j}(t))-\xi_{j}(t)\nabla\psi(x_{j}(t)),\;\;\forall\sp t\in [0,1].\end{equation} Thus, upon taking the pointwise limit as $j\f \infty$ in \eqref{forallt},
it follows that $(\dot{x}_j)_j$ converges pointwise to its $L^2$-limit $\dot{x}$, and hence, $\dot{x}\in BV([0,1];\R^n)$.  As $(x_j,u_j)$ solves ($D$),  \eqref{lipx} yields 
that $(\|\dot{x}_j\|_\infty)_j$ is uniformly bounded,  and hence,  $(\dot{x}_j)_j$  converges to $\dot{x}$ {\it strongly} in $L^2$.
 
 We  now show that $\xi(t)$ is supported in $ I^{0}(x)$. Let $t\in I^{\-}(x)$ be fixed,  that is, $x(t)\in \inte C$.  Since $(x_j)_j$ converges uniformly to $x$, then we can find $\delta_o >0$ and $j_o \in \N$ such that, for all $ s\in (t-\delta,t+\delta)\cap [0,1]$ and for all $j\ge j_o$, we have $ x_j(s)\in \inte C$, and hence, as  $\xi_j$ satisfies \eqref{boundxi},   $\xi_j(s)=0$. Thus, $\xi_j(s)\f 0$   for $ s\in (t-\delta_o,t+\delta_o)\cap [0,1]$, and whence, $\xi(t)=0$, proving that $\xi $ is supported in $I^{0}(x)$. 
 Therefore,   applying  Lemma \ref{characterizationD}  to $(x,u,\xi)$, we  conclude that $(x, u)$ solves ($D$), $\xi\in L^{\infty}([0,1];\R^{+})$ and  $(x,u,\xi)$ satisfies \eqref{boundxi}. \end{proof}

In the following remark, we provide important information about the constraint qualification property (CQ).

\begin{remark}\label{CQremark} \qquad \begin{enumerate}[$(i)$,leftmargin=0.8cm] 
\item Let $F\colon [0,1]\rightrightarrows\R^m$ be a lower semicontinuous multifunction with closed and nonempty values. For $d_F(t,x):=d(x,F(t))$, we have from \cite[Proposition 2.3]{lowen91} that for  $t\in [0,1]$ and $x\in F(t)$, $\conv (\bar{N}^L_{F(t)}(x))$ is pointed if and only if $0\not\in \partial_x^{\sp >}d_F(t,x)$. The notion of $\partial_x^{\sp >}g(t,x)$  is introduced  for a general function $g(t,x)$ by Clarke in \cite[p.121]{clarkeold}.  For $g(t,x):= d_F(t,x)$,  it is shown in \cite[Corollary 2.2]{lowen91} that   \begin{eqnarray*} &&\partial_x^{\sp >}d_F(t,x)=\\[3pt]&& \conv \left\{\zeta : \zeta=\lim_{i\f\infty}\zeta_i,\;\|\zeta_i\|=1,\;\zeta_i\in N_{F(t_i)}^P(x_i)\;\hbox{and}\;(t_i,x_i)\xrightarrow{{\Gr F}} (t,x)\right\},\end{eqnarray*} where $(t_i,x_i)\xrightarrow{{\Gr F}} (t,x)$ signifies that $(t_i,x_i)\f (t,x)$ with $x_i\in F(t_i)$ for all $i$. Therefore, for  $h\in C([0,1];F)$, we have that $F$ satisfies the constraint qualification (CQ) at $h$ if and only if $0\not\in \partial_x^{\sp >}d_F(t,h(t))$ for all $t\in [0,1]$. Note that the multifunction $(t,x)\mapsto \partial_x^{\sp >}d_F(t,x)$ is uniformly bounded with compact and convex values, and has a closed graph.
  
\item Using the proximal normal inequality, one can easily extend the   arguments in the proof of \cite[Proposition 2.3(d)]{lowen91}, to show  that if the lower semicontinuous multifunction $F$ has closed and ${r}$-{\it prox-regular} values, for some ${r}>0$, (as opposed to convex), then $\conv (\bar{N}^L_{F(t)}(\cdot))={N}^P_{F(t)}(\cdot)={N}^L_{F(t)}(\cdot)={N}_{F(t)}(\cdot),$ and this cone is pointed at $x\in F(t)$ if and only if $F(t)$ is epi-lipschitz at $x$, see \cite[Theorem 7.3.1]{clarkeold} and \cite[Exercise 9.42]{rockwet}. Hence, a lower semicontinuous multifunction $F\colon [0,1]\rightrightarrows\R^m$ with values that are closed and ${r}$-prox-regular,  satisfies the constraint qualification (CQ) at $h\in C([0,1];F)$ if and only if $F(t)$ is epi-Lipschitz at $h(t)$, for all $t\in[0,1]$.
\item If $F(t)=F$ for all $t\in [0,1]$, where $F$ is closed, then $\conv (\bar{N}^L_{F(t)}(\cdot))={N}_{F}(\cdot)$, and this cone is pointed at $x\in F$ if and only if $F$ is epi-Lipschitz at $x$. Hence, a constant multifunction $F$ satisfies the constraint qualification (CQ) at $h\in C([0,1];F)$ if and only if $F$ is epi-Lipschitz at $h(t)$ for all $t\in [0,1]$.
\end{enumerate}
\end{remark}

We terminate this section by the following technical lemma used in the proof of the \enquote{In addition} part of the weak maximization condition of Theorem \ref{mpw12bisw12}. The proof of the lemma follows from the local property of the normal cones, the proximal normal inequality, and the fact that the proximal, Mordukhovich, and Clarke normal cones coincide in our setting.

\begin{lemma} \label{newlemmaprox} Let $F\colon [0,1]\rightrightarrows\R^m$ be a lower semicontinuous multifunction with closed and nonempty values and let $h\in C([0,1];F)$. If there exist $\e_{\bbbp o}>0$ and $r>0$ such that $F(t)\cap \bar{B}_{\e_{\bbbp o}}(h(t))$ is $r$-prox-regular for all $t\in[0,1]$, then for any $\delta>0$ we have \begin{equation*}\label{neweqprox} \conv (\bar{N}^L_{F(t)\cap \bar{B}_{\delta}(h(t))}(h(t)))=N^P_{F(t)\cap \bar{B}_{\e_{\bbbp o}}(h(t))}(h(t)),\;\;\forall t\in [0,1]. \end{equation*}
Moreover, for all $t\in [0,1]$ and for all $\zeta\in \conv (\bar{N}^L_{F(t)\cap \bar{B}_{\delta}(h(t))}(h(t)))$, we have \begin{equation*}\label{neweqproxbis}\textstyle   \<\zeta,v-h(t)\>\leq \frac{\|\zeta\|}{\min\{\e_{\bbbp o},2r\}}\|v-h(t)\|^2,\;\;\forall v\in F(t).	
 \end{equation*}
\end{lemma}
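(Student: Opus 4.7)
My plan is to establish the set equality (Part 1) first, and then use the resulting proximal characterization to derive the quadratic estimate (Part 2). For Part 1, I would route through an intermediate step that replaces the ball of radius $\delta$ by the ball of radius $\e_{\bbbp o}$ using the local property of the limiting normal cone, and then identify the resulting graphical closure with the proximal normal cone of $B(t):=F(t)\cap\bar{B}_{\e_{\bbbp o}}(h(t))$ via prox-regularity.

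For the reduction from $\bar{B}_\delta(h(\cdot))$ to $\bar{B}_{\e_{\bbbp o}}(h(\cdot))$, set $A(s):=F(s)\cap\bar{B}_{\delta}(h(s))$. By continuity of $h$, every $(s,y)$ in a small enough neighborhood of $(t,h(t))$ with $y\in F(s)$ satisfies $\|y-h(s)\|<\min\{\delta,\e_{\bbbp o}\}/2$, so a small ball around $y$ sits strictly inside both $\bar{B}_\delta(h(s))$ and $\bar{B}_{\e_{\bbbp o}}(h(s))$. On such a ball the sets $A(s)$ and $B(s)$ agree, and the local property of the limiting normal cone yields $N^L_{A(s)}(y)=N^L_{B(s)}(y)$. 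Therefore the two graphical closures at $(t,h(t))$ coincide: $\bar{N}^L_{A(\cdot)}(h(t))=\bar{N}^L_{B(\cdot)}(h(t))$.

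To identify $\bar{N}^L_{B(\cdot)}(h(t))$ with $N^P_{B(t)}(h(t))$, I would take an arbitrary $\zeta=\lim_i\zeta_i$ with $\zeta_i\in N^L_{B(t_i)}(y_i)$ and $(t_i,y_i)\to(t,h(t))$, use the $r$-prox-regularity of $B(t_i)$ to rewrite $\zeta_i\in N^P_{B(t_i)}(y_i)$, and apply the proximal normal inequality $\<\zeta_i,z-y_i\>\leq\frac{\|\zeta_i\|}{2r}\|z-y_i\|^2$ for every $z\in B(t_i)$. For each $z\in F(t)$ with $\|z-h(t)\|<\e_{\bbbp o}$, the lower semicontinuity of $F$ supplies $z_i\in F(t_i)$ with $z_i\to z$, and the strict inequality forces $z_i\in B(t_i)$ for large $i$. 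Passing to the limit yields $\<\zeta,z-h(t)\>\leq\frac{\|\zeta\|}{2r}\|z-h(t)\|^2$ for all such $z$, and continuity extends this to all $z\in B(t)$, hence $\zeta\in N^P_{B(t)}(h(t))$. The reverse inclusion holds by constant sequences. Since $B(t)$ is $r$-prox-regular, $N^P_{B(t)}(h(t))$ coincides with the Clarke normal cone and is convex, so the $\conv$ operation is redundant and Part 1 follows.

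For Part 2, fix $\zeta\in N^P_{B(t)}(h(t))$. The proximal normal inequality furnishes the required bound with constant $\frac{1}{2r}$ on every $v\in B(t)$, while for $v\in F(t)$ with $\|v-h(t)\|>\e_{\bbbp o}$, Cauchy-Schwarz gives $\<\zeta,v-h(t)\>\leq\|\zeta\|\|v-h(t)\|\leq\frac{\|\zeta\|}{\e_{\bbbp o}}\|v-h(t)\|^2$; the two regimes combine into the single bound with constant $\frac{1}{\min\{\e_{\bbbp o},2r\}}$. The main obstacle I foresee is the limit transfer in the previous paragraph: pulling the proximal normal inequality from the moving target $B(t_i)$ down to $B(t)$ hinges on approximating each test point $z\in B(t)$ by points $z_i\in B(t_i)$, which relies on both the lower semicontinuity of $F$ and on the slack afforded by the strict inequality $\|z-h(t)\|<\e_{\bbbp o}$; by comparison, the local-property argument for $A$ versus $B$ and the Cauchy-Schwarz splitting are routine.
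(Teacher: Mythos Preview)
Your proposal is correct and follows exactly the route sketched in the paper, which simply cites the three ingredients you deploy: the local property of normal cones (your reduction from $\bar{B}_\delta$ to $\bar{B}_{\e_{\bbbp o}}$), the proximal normal inequality (your limit argument and your Cauchy--Schwarz splitting in Part 2), and the coincidence of proximal, Mordukhovich and Clarke cones on prox-regular sets (making $\conv$ redundant). One minor point: the phrase ``continuity extends this to all $z\in B(t)$'' is not quite the right justification, since points of $F(t)$ at distance exactly $\e_{\bbbp o}$ from $h(t)$ need not be approximable from the strict interior; however, the inequality you obtain on the open ball is already a \emph{local} proximal normal inequality at $h(t)$, which by definition yields $\zeta\in N^P_{B(t)}(h(t))$, and then the $r$-prox-regularity of $B(t)$ returns the global inequality with constant $\frac{\|\zeta\|}{2r}$ on all of $B(t)$.
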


\end{document}